\newcommand{\ind}{\nolinebreak{\perp \!\!\! \perp}}
\newcommand{\Hom}{\mathrm{Hom}}
\newcommand{\rank}{\mathrm{rank}}
\newcommand{\Mat}{\mathrm{Mat}}
\newcommand{\Scan}{\mathrm{Scan}}
\newcommand{\G}{\mathcal{G}}
\newcommand{\Sub}{\mathrm{\hat{S}ub}}
\newcommand{\Mixt}{\mathrm{Mixt}}
\newcommand{\minor}{\mathrm{minor}}
\newcommand{\sgn}{\mathrm{sgn}}
\newcommand{\Sym}{\mathrm{Sym}}
\newcommand{\LCM}{\mathrm{lcm}}
\newcommand{\LM}{\mathrm{LM}}
\newcommand{\LT}{\mathrm{LT}}
\newcommand{\jh}{\mathrm{\hat{j}}}
\newcommand{\kh}{\mathrm{\hat{k}}}
\theoremstyle{plain}
\newtheorem{thm}{Theorem}[section]
\newtheorem{lem}[thm]{Lemma}
\newtheorem{prop}[thm]{Proposition}
\newtheorem{cor}[thm]{Corollary}
\theoremstyle{definition}
\newtheorem{defn}[thm]{Definition}
\newtheorem{exmp}[thm]{Example}
\newtheorem{rem}[thm]{Remark}
\begin{document}

\title{Gr\"obner bases for bipartite determinantal ideals}
\author{Josua Illian}
\address{Department of Mathematics
and Statistics,
Oakland University, 
Rochester, MI 48309-4479, USA 
}
\email{jillian@oakland.edu}

\author{Li Li}
\address{Department of Mathematics
and Statistics,
Oakland University, 
Rochester, MI 48309-4479, USA 
}
\email{li2345@oakland.edu}

\subjclass[2010]{Primary 14M12; Secondary 13P10, 13C40, 05E40}

\begin{abstract}
Nakajima's graded quiver varieties naturally appear in the study of bases of cluster algebras. One particular family of these varieties, namely the \emph{bipartite determinantal varieties}, can be defined for any bipartite quiver and gives a vast generalization of classical determinantal varieties with broad applications to algebra, geometry, combinatorics, and statistics. The ideals that define bipartite determinantal varieties are called bipartite determinantal ideals.
We provide an elementary method of proof showing that the natural generators of a bipartite determinantal ideal form a Gr\"obner basis, using an S-polynomial construction method that relies on the Leibniz formula for determinants. This method is developed from an idea by Narasimhan \cite{Narasimhan} and Caniglia--Guccione--Guccione \cite{Caniglia}.
As applications, we study the connection between double determinantal ideals (which are bipartite determinantal ideals of a quiver with two vertices) and tensors, and provide an interpretation of these ideals within the context of algebraic statistics.
\end{abstract}

\maketitle


\section{Introduction}
The main objective of the paper is to study bipartite determinantal ideals and in particular, double determinantal ideals. We shall prove that the natural generators of such an ideal form a Gr\"obner basis under a suitable term order. Additionally, we study the connection between double determinantal ideals and the 3-dimensional tensors as well as algebraic statistics. 

We start by recalling the classical determinantal ideals. Let $K$ be a field, $C$ be an $m\times n$ matrix of independent variables $x_{ij}$, let $u$ be a positive integer. The ideal $I^{\rm det}_{m,n,u}$ of $K[x_{ij}]$ generated by all $u\times u$ minors in $A$ is called a determinantal ideal, and $K[x_{ij}]/I^{\rm det}_{m,n,u}$ is called a determinantal ring, 
the affine variety defined by a determinantal ideal is called a determinantal variety.  
Determinantal ideals have long been a central topic, a test field, and a source of inspiration for various fields including commutative algebra, algebraic geometry, representation theory, and combinatorics.
Through the use of Gr\"obner basis theory and simplicial complexes, researchers have studied the minimal free resolution and syzygies (or in geometric terms of vector bundles) of determinantal ideals, as well as their Arithmatic Cohen-Macaulayness, multiplicities, powers and products, Hilbert series, $a$-invariant, $F$-regularity and $F$-rationality, local cohomology, etc. Various tools such as the straightening law and the Knuth-Robinson-Schensted (KRS) correspondence, non-intersecting paths from algebraic combinatorics, and liaison theory from commutative algebra have been employed in these studies. Determinantal ideals are closely related to the study of Grassmannian and Schubert varieties, and have many interesting specializations, generalizations, and variations, including Segre varieties, linear determinantal varieties, rational normal scrolls, symmetric and skew-symmetric determinantal varieties, (classical, two-sided, or mixed) ladder determinantal varieties, etc. For more information, readers may referred to \cite{Baetica, BC, Harris, MS, Miro-Roig, Weyman} and the references therein.

This paper studies bipartite determinantal ideals which are closely related to Nakajima's graded quiver varieties. Indeed, for a given bipartite quiver, the corresponding Nakajima's affine graded quiver varieties are exactly the affine varieties defined by the bipartite determinantal ideals  (see \ref{section:bipartite det}). 

Recall that a bipartite quiver is a quiver $\mathcal{Q}$ with vertex set ${\rm V}_\mathcal{Q}={\rm V}_{\rm source}\sqcup {\rm V}_{\rm sink}$ and each arrow $h$ has source ${\rm s}(h)\in {\rm V}_{\rm source}$ and target ${\rm t}(h)\in {\rm V}_{\rm sink}$.
Given a matrix $A$ and a positive integer $u$, denote $D_{u}(A)$ to be the set $\{u\text{-minors of }A \}$.

\begin{defn}\label{bipartitedeterminantalideal}
 Let $\mathcal{Q}$ be a bipartite quiver with $d$ vertices and $r$ arrows $h_i:{\rm s}(h_i)\to {\rm t}(h_i)$ (for $i=1,\dots,r$).
 Let ${\bf m}=(m_1,\dots,m_d)$, ${\bf u}=(u_1,\dots,v_d)$  be $d$-tuples of nonnegative integers.
Let $X= \{ X^{(k)}=[x^{(k)}_{ij}]\}_{k=1}^r $ be a set of variable matrices where the size of $X^{(k)}$ is $m_{{\rm t}(h_k)}\times m_{{\rm s}(h_k)}$.  
For $\alpha\in {\rm V}_{\rm sink}$, let $r_1<\cdots<r_s$ be the indices such that $h_{r_1},\dots,h_{r_s}$ are all the arrows with target $\alpha$, and define
	$$ A_\alpha= [ X^{(r_1)}|X^{(r_2)}|\hdots|X^{(r_s)} ].$$
For $\beta\in {\rm V}_{\rm source}$, let $r'_1<\cdots<r'_t$ be the indices such that $h_{r'_1},\dots,h_{r'_t}$ are all the arrows with source $\beta$, and define
	$$A_\beta=\left[
	\begin{array}{c}
	X^{(r'_1)}  \\ \hline
	X^{(r'_2)} \\ \hline
	\vdots \\ \hline
	X^{(r'_t)}
	\end{array}
	\right]. $$	
For $\gamma\in{\rm V}_\mathcal{Q}$, let $a_\gamma\times b_\gamma$ be the size of $A_\gamma$, that is, 
$(a_\alpha,b_\alpha) :=(m_\alpha,\sum_{h:\ {\rm t}(h)=\alpha} m_{{\rm s}(h)})$ for $\alpha\in {\rm V}_{\rm sink}$, 
$(a_\beta,b_\beta) :=(\sum_{h:\ {\rm s}(h)=\beta} m_\alpha,m_{{\rm t}(h)})$ for $\beta\in {\rm V}_{\rm source}$. 
The \textit{bipartite determinantal ideal} $I_{\mathcal{Q},{\bf m},{\bf u}}$ is the ideal of $K[X]$ generated by $\bigcup_{\gamma\in {\rm V}_\mathcal{Q}}D_{u_\gamma+1}(A_\gamma)$.  These generators are called the \emph{natural generators} of $I_{\mathcal{Q},{\bf m},{\bf u}}$. 
In particular, we call $I_{\mathcal{Q},{\bf m},{\bf u}}$ a double determinantal ideal when $Q$ has two vertices.  
\end{defn}
Obviously, 
$\{$determinantal ideals$\}\subseteq \{$double determinantal ideals$\}\subseteq \{$bipartite determinantal ideals$\}$. 
Gr\"obner theory plays a fundamental role in the study of the classical determinantal ideals. A key fact is that the minors that generate a determinantal ideal form a Gr\"obner basis (as remarked in \cite{BC}, this fact was first proved by Narasimhan \cite{Narasimhan} and then reproved many times).  
The main result of this paper is a similar statement for bipartite determinantal ideals.

\begin{thm} \label{theorem:bipartite determinantal}
The natural generators of  
{bipartite determinantal ideal} $I_{\mathcal{Q},{\bf m},{\bf u}}$ forms a Gr\"obner basis with respect to any lexicographical monomial order that is consistent in $A_\gamma$ for all $\gamma\in{\rm V}_\mathcal{Q}$ in the sense of Definition \ref{order}.
\end{thm}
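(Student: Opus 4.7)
Plan.

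My plan is to verify Buchberger's S-polynomial criterion on the natural generators of $I_{\mathcal{Q},{\bf m},{\bf u}}$. Since these generators are partitioned by vertex as $\bigcup_{\gamma\in {\rm V}_\mathcal{Q}} D_{u_\gamma+1}(A_\gamma)$, the verification splits by the locations of the vertices $\gamma,\gamma'$ attached to a pair of generators $f\in D_{u_\gamma+1}(A_\gamma)$ and $g\in D_{u_{\gamma'}+1}(A_{\gamma'})$. In each case I would produce an explicit standard representation of $S(f,g)$.

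When $\gamma=\gamma'$, both $f$ and $g$ live inside a single generic matrix $A_\gamma$, and the claim is the classical theorem of Narasimhan~\cite{Narasimhan} and Caniglia--Guccione--Guccione~\cite{Caniglia} that the $(u_\gamma+1)$-minors of a generic matrix form a Gr\"obner basis with respect to a consistent lexicographic order. Their S-polynomial construction, built from the Leibniz expansion of the determinant, applies verbatim. When $\gamma\neq\gamma'$ lie on the same side of the bipartition, $A_\gamma$ and $A_{\gamma'}$ involve disjoint sets of variables, because distinct sinks share no incoming arrow and distinct sources share no outgoing arrow. Thus $\LM(f)$ and $\LM(g)$ are coprime and the standard coprime-leading-term criterion of Buchberger gives $S(f,g)\to 0$ automatically.

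The genuinely new case is when $\gamma=\alpha$ is a sink, $\gamma'=\beta$ is a source, and at least one arrow $h_k:\beta\to\alpha$ exists. The only possible shared variables of $\LM(f)$ and $\LM(g)$ are entries of such $X^{(k)}$, because $X^{(k)}$ is the only kind of block appearing simultaneously as a horizontal slab of $A_\alpha$ and as a vertical slab of $A_\beta$. The consistent lex order forces $\LM(f)$ and $\LM(g)$ to be products of entries at prescribed corner positions of the chosen submatrices, so one can read off exactly which entries $x^{(k)}_{ij}$ are common and at which row/column positions they sit inside $A_\alpha$ versus $A_\beta$. I would extend the Narasimhan S-polynomial construction to this cross setting: expand $S(f,g)$ via the Leibniz formula for both determinants, pair off those monomials that differ only by a row-for-column transposition inside $X^{(k)}$ so as to cancel the common leading term, and express the remainder as a $K[X]$-linear combination of natural generators drawn from both $D_{u_\alpha+1}(A_\alpha)$ and $D_{u_\beta+1}(A_\beta)$ whose leading monomials lie strictly below $\LCM(\LM(f),\LM(g))$.

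The principal obstacle is the sign and index bookkeeping in the cross case, caused by the asymmetry that $X^{(k)}$ contributes columns to $A_\alpha$ but rows to $A_\beta$, so that the Leibniz-level cancellation between $f$ and $g$ interleaves two different orderings of the same shared entries. I plan to tame this by first proving the identity in the degenerate situation where $\mathcal{Q}$ has only the single arrow $h_k:\beta\to\alpha$, so that $A_\alpha=A_\beta=X^{(k)}$ and the cross case collapses to the classical case on one block; and then using multilinearity of the determinant to graft on the remaining arrows' blocks above, below, left, and right of $X^{(k)}$, whose extra entries never appear among the shared leading variables and hence do not disrupt the cancellations. Once the identity is established in the cross case, Buchberger's criterion closes the proof.
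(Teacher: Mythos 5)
Your case decomposition by vertex is correct and matches the outline of the paper's proof: same-vertex pairs reduce to the generic-matrix result, distinct vertices on the same side of the bipartition have disjoint variable sets (hence coprime leading monomials, hence nothing to check), and the only substantive work is the sink--source pair joined by an arrow. Your starting point for the cross case — expand $S(M,N)$ via the Leibniz formula and pair off monomials — is also exactly how the paper begins: this is the $P(M,N)$ construction of Definition \ref{main} built out of the $L(\sigma,\tau)$ and the double-sum identity \eqref{doublesum}, and the paper verifies $S(M,N)=P(M,N)$ in Proposition \ref{P=S}.

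The gap is that this expansion does \emph{not} always give a valid standard representation. The paper shows in Proposition \ref{noviolation} that there exist pairs $(M,N)$ for which some $L(\sigma,\tau)$ is strictly \emph{larger} than $L=\LCM(\LM(M),\LM(N))$ — this happens exactly when $(M,N)$ has a \emph{strict violation} — and in that situation the Leibniz-level cancellation you describe produces pseudominor summands whose leading terms exceed $L$, so the expression is useless for Buchberger's test. You do not anticipate this failure: the obstacle you name (``sign and index bookkeeping'') is real but minor, whereas the genuine obstruction is combinatorial. Because of it, the paper cannot and does not verify Buchberger's criterion pair by pair. Instead it invokes the M\"oller--Mora chain variant (Proposition \ref{chain}), introduces a distance function and a notion of $(A,B)$-adjacency (Definitions \ref{distance}, \ref{adjacent}), and proves via a ``transplant'' construction (Propositions \ref{notdefective}, \ref{noviolation'}) that an adjacent pair never has a violation, so that the Leibniz expansion \emph{does} have sufficiently small leading terms for such pairs (Proposition \ref{lessthanL}). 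General pairs are then connected by an explicit chain through intermediate minors (Propositions \ref{thereexistsachain}, \ref{chain''}). Your ``graft on the remaining blocks by multilinearity'' idea does not avoid this issue either: the extra blocks genuinely change which entries are shared and where incidences and violations sit, and the collapsed single-arrow case you propose as a base case already contains all the combinatorial difficulty since $u_\alpha\neq u_\beta$ in general. To complete your proof you would need to replicate (or replace) the violation/adjacency/chain apparatus — the same-vertex case you cite to Narasimhan and Caniglia--Guccione--Guccione also relies on precisely this kind of chain argument, which your write-up treats as if it were an unconditional $S$-pair reduction.
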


\begin{exmp}
An example of a double determinantal variety is given in Example \ref{eg:m=n=r=u=v=2}. For a more general example,
consider the following quiver:
\begin{figure}[h]
\begin{center}
	\begin{tikzpicture}[scale=1.3]
	\draw (0,0)node{$3$};
	\draw (2,0)node{$1$};
	\draw (0,-1.5)node{$4$};
	\draw (2,-1.5)node{$2$};
	
	\draw[-stealth](2-.3,0.2) -- (.3,0.2)  node [midway] {\tiny$h_1$};
	\draw[-stealth](2-.3,0) -- (.3,0) node [midway] {\tiny$h_2$};
	\draw[-stealth](2-.3,-.2) -- (.3,-.2) node [midway] {\tiny$h_3$};

	\draw[-stealth](2-.3,-.3) -- (.3,-1.5+.3)  node [pos=.4,above] {\tiny$h_4$};
	\draw[-stealth](2-.3+.1,-.3-.1) -- (.3+.1,-1.5+.3-.1)  node [pos=.2,below] {\tiny$h_5$};

	\draw[-stealth](2-.3,-1.5+.3) -- (.3,-.3)  node [pos=.3,below] {\tiny$h_6$};

	\draw[-stealth](2-.3,-1.4) -- (.3,-1.4)  node [midway] {\tiny$h_7$};
	\draw[-stealth](2-.3,-1.6) -- (.3,-1.6) node [midway] {\tiny$h_8$};
	\end{tikzpicture} 
\label{fig:bipartite}
\end{center}
\end{figure}

Let $(m_1,m_2,m_3,m_4)=(4,4,3,3)$, $u_1=\cdots=u_8=1$. 
Then all $X^{(i)}$ have size $3\times 4$, and 
$$
A_1
= \left[
	\begin{array}{c}
	X^{(1)}  \\ \hline
	X^{(2)} \\ \hline
	X^{(3)} \\ \hline
	X^{(4)} \\ \hline
	X^{(5)} \\ 
	\end{array}
	\right]
,\,
A_2
= \left[
	\begin{array}{c}
	X^{(6)}  \\ \hline
	X^{(7)} \\ \hline
	X^{(8)} \\ 
	\end{array}
	\right]
,\,
A_3=[X^{(1)}|X^{(2)}|X^{(3)}|X^{(6)}]
,\,
A_4=[X^{(4)}|X^{(5)}|X^{(7)}|X^{(8)}]
$$
Theorem \ref{theorem:bipartite determinantal} asserts that the all the $2\times2$-minors in $A_1,\dots,A_4$ form a Gr\"obner basis with respect to any lexicographical order that is consistent in all $A_i$ (as an example of such an order, take the reading order on each $X^{(k)}$, and let the variables in $X^{(k)}$ to be larger than the variables in $X^{(k')}$ if $k<k'$). 
\end{exmp}

Note that Fieldsteel and Klein have proved Theorem \ref{theorem:bipartite determinantal} for double determinantal ideals in \cite{FK}, and their method probably extends to all bipartite determinantal ideals. Their elegant proof uses an advanced tool in commutative algebra called the liaison theory, which is very effective in the study of the determinantal varieties and the various generalizations. 
In contrast, our proof is more elementary and uses only the  S-pair argument described in Proposition \ref{chain};
this idea is inspired by the work of Narasimhan \cite{Narasimhan} and Caniglia--Guccione--Guccione \cite{Caniglia}. 
In \cite{Narasimhan}, arising from Abyankar's work on singularities of Schubert varieties of flag manifolds, Narasimhan established the primality, and thus irreducibility, of the ladder determinantal ideal. 
%
%
We would also like to mention that Conca, De Negri, and Stojanac proved in \cite{Conca} the Gr\"obner basis result on the generators of two flattenings of an order 3 tensor of size $2\times a\times b$, which is a special case of the double determinantal ideals studied in this paper.

The paper is organized as follows. 
In \S2 we recall the background of Gr\"obner bases, give a key example (Example \ref{example1}), and we prove the main theorem on Gr\"obner bases.
In \S3 we discuss some applications by relating the double determinantal ideals to the 3-dimensional tensors and algebraic statistics. 
Most results of the paper were originally included in the first author's PhD dissertation in 2020.  
\smallskip

{\bf Acknowledgments.}  We gratefully acknowledge discussions with Patricia Klein, Allen Knutson, and Alex Yong; we thank Bernd Sturmfels and Seth Sullivant for pointing to us the relations of the double determinantal varieties to tensors and algebraic statistics; we thank  Nathan Fieldsteel and Patricia Klein for sharing drafts of their papers. We greatly appreciate the referee for carefully reading through the paper and providing many helpful comments and suggestions.   Computer calculations were performed using Macaulay 2 \cite{M2}.

\section{Gr\"obner bases}

\subsection{Gr\"obner bases and monomial orders}\label{subsection:facts about Groebner basis}
Let $R=K[x_1,\hdots, x_n]$ be a polynomial ring in a finite number of variables over a field $K$.  In this paper, the term \emph{monomial} refers to a product of the form $x_1^{i_1}\cdots x_n^{i_n}$ with coefficient 1. Let $\mathcal{N}$ be the set of all monomials in $R$ with a chosen monomial order ``$>$''; for a polynomial $f\in R$, let  $\LM(f)=\LM_{>}(f)$ be the leading monomial of $f$, $\LT(f)=\LT_{>}(f)$ be the leading term of $f$, $\LT(I)=\LT_{>}(I)$ be the initial ideal of $I$.
We say that a finite set $\G$ is a \textit{Gr\"obner basis}  (for the  ideal $I$ generated by $\G$) with respect to the monomial order ``$>$''  if $\LT_{>}(I)$ is generated by the set $ \{ \LT_{>}(f)|f\in \G \} $.
Given any two polynomials $f,g$, their \textit{S-pair} is defined as
	\[ S(f,g) = \frac{L}{\LT(f)}f - \frac{L}{\LT(g)}g, \textrm{ where  }L=\LCM(\LM(f),\LM(g))\in\mathcal{N}. \]
The well-known Buchberger's criterion says that $\G$ is a Gr\"obner basis if for any $f,g\in \G$,  the remainder of $S(f,g)$ by $\G$ is zero; 
the following variation will be used in our proof.

\begin{prop} [Theorem 3.2 in \cite{Moeller}; also see \cite{Caniglia}] \label{chain}
	A set $\G$  of polynomials is a Gr\"obner basis (for the ideal it generates) if and only if for any two polynomials $M,N\in \G$ there exists a finite chain of polynomials $M_0, M_1,...,M_{k-1},M_k \in\G$ 
	such that $M_0=M$, $M_k=N$, and the following holds for all $i=1,\hdots,k$:
	
	{\rm(1)} $\LM(M_i) | \LCM(\LM(M),\LM(N))$, and
	
	{\rm(2)} $S(M_{i-1},M_i)=\sum_{j=1}^n a_jP_j$, where $n\in \mathbb{Z}_{\ge0}$, $a_j$ are monomials, $P_j\in \G$, and $$\LM(a_jP_j)<\LCM(\LM(M_{i-1}),\LM(M_i)) \textrm{ for all }j .$$
\end{prop}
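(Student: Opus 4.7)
The plan is to reduce both implications to the standard form of Buchberger's criterion: $\G$ is a Gr\"obner basis if and only if every S-polynomial $S(M,N)$ with $M,N\in\G$ admits a \emph{standard representation} $S(M,N)=\sum_j a_j P_j$ with $P_j\in\G$, $a_j$ monomials, and $\LM(a_j P_j)<\LCM(\LM(M),\LM(N))$. This form of the criterion is equivalent to the more familiar statement that $S(M,N)$ reduces to $0$ modulo $\G$ under the division algorithm.

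For the forward direction $(\Rightarrow)$, given $M,N\in\G$, I would exhibit the trivial chain with $k=1$, $M_0=M$, $M_1=N$. Condition (1) is then tautological, and condition (2) is precisely the standard representation of $S(M,N)=S(M_0,M_1)$ furnished by the Gr\"obner basis hypothesis, noting that $\LCM(\LM(M_0),\LM(M_1))=\LCM(\LM(M),\LM(N))$.

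For the backward direction $(\Leftarrow)$, fix $M,N\in\G$ and set $L=\LCM(\LM(M),\LM(N))$ and $L_i=\LCM(\LM(M_{i-1}),\LM(M_i))$ along a chain as in the hypothesis. The engine of the argument is the telescoping identity
$$ S(M,N)=\frac{L}{\LT(M)}M-\frac{L}{\LT(N)}N=\sum_{i=1}^k\left(\frac{L}{\LT(M_{i-1})}M_{i-1}-\frac{L}{\LT(M_i)}M_i\right)=\sum_{i=1}^k \frac{L}{L_i}\, S(M_{i-1},M_i), $$
which is well-defined because condition (1) ensures that each $L_i$ divides $L$, so each $L/L_i$ is a genuine monomial. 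Substituting the representations supplied by condition (2) yields $S(M,N)=\sum_{i,j}(L/L_i)\,a_{ij}P_{ij}$, and the estimate $\LM((L/L_i)a_{ij}P_{ij})=(L/L_i)\,\LM(a_{ij}P_{ij})<(L/L_i)\cdot L_i=L$ shows this is a standard representation of $S(M,N)$. Buchberger's criterion then finishes the proof.

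The only real obstacle is the telescoping identity itself together with the accompanying bookkeeping of leading monomials; neither step is conceptually deep once one recognizes that conditions (1) and (2) are designed precisely to make this identity collapse into a standard representation, with (1) guaranteeing that multiplication by $L/L_i$ stays within the monomial monoid, and (2) supplying atomic standard representations for each consecutive S-polynomial that aggregate correctly under the telescoping.
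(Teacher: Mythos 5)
Your proof is correct. Note first that the paper itself gives no proof of this proposition; it is quoted from M\"oller--Mora (Theorem 3.2) and Caniglia--Guccione--Guccione, so there is no internal argument to compare against. Your telescoping identity is exactly the standard proof of this ``chain criterion'': condition (1) makes each $L/L_i$ a genuine monomial (since $\LM(M_{i-1})$ and $\LM(M_i)$ both divide $L$, so does $L_i$), the telescoping sum collapses to $S(M,N)$, and compatibility of the monomial order with multiplication turns the bounds $\LM(a_{ij}P_{ij})<L_i$ from condition (2) into $\LM((L/L_i)a_{ij}P_{ij})<L$. The only ingredient you treat as a black box is the ``lcm-representation'' form of Buchberger's criterion --- that $\G$ is a Gr\"obner basis provided every $S(f,g)$ with $f,g\in\G$ admits \emph{some} representation $\sum_j a_jP_j$ with $\LM(a_jP_j)<\LCM(\LM(f),\LM(g))$, not necessarily one produced by the division algorithm. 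This is a strictly stronger statement than the ``remainder is zero'' version (its hypothesis is weaker), so your parenthetical claim that the two are ``equivalent'' should be read as ``both characterize Gr\"obner bases,'' not as a term-by-term equivalence for a single pair; but the lcm-representation criterion is itself standard (it is the key cancellation lemma in the usual proof of Buchberger's theorem), so invoking it is legitimate and your argument is complete.
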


\begin{defn}\label{order}
Let $C=(x_{ij})$ be a matrix of variables and ``$>$'' be a monomial order.  
We say that  ``$>$'' is \textit{consistent} with $C$ if $x_{ij}> x_{ij'}$ whenever $j'>j$, and $x_{ij}> x_{i'j}$ whenever $i'>i$; that is, the variables decrease from left to right and from top to bottom.
We say that two points $(i,j)$, $(k,l)$ are in NW-SE position (or the entries $x_{ij}$ and $x_{kl}$ of a matrix are in NW-SE position) when $(k-i)(l-j)>0$. We say that a sequence of points (or entries of a matrix) are in NW-SE position if every pair is in NW-SE position.
\end{defn}

Note that a lexicographic order consistent with $C$ is a \textit{diagonal order}, that is, for every square submatrix $D$ of $C$, the leading term of $\det(D)$ is the product of the main diagonal entries of $D$. 
An example of a consistent order is the reading order, namely the lexicographic order with  
$ x_{11}> x_{12}>\hdots > x_{1n} > x_{21}>x_{22}>\hdots > x_{2n} > \cdots>
x_{m1}>x_{m2}>\hdots > x_{mn}$.
In the setup in Definition \ref{bipartitedeterminantalideal}, there exists a lexicographical order that is consistent with $A_\gamma$ for all $\gamma$. For example,  define  ``$>$'' by requiring that  $x_{ij}^{(s)}> x_{kl}^{(t)}$ when   $s<t$, and use the reading order if $s=t$. 
Under such an order, the leading term of any minor $M$ of $A_\gamma$  consists of a sequence of variables of $A_\gamma$ which are in NW-SE position.

\subsection{Notations and a key example}\label{lincom}
	
Recall the Leibniz formula for matrix determinants: if $C=\left[ x_{ij} \right]$ is an $n\times n$ matrix of variables, then
	\begin{equation}\label{leibniz}
		\det(C) = \sum_{\sigma\in S_n} \sgn(\sigma) \prod_{i=1}^n x_{\sigma(i),i} =\sum_{\tau\in S_n}\sgn(\tau) \prod_{i=1}^n x_{i,\tau(i)}. 
	\end{equation}

	\begin{defn} \label{def}
	
	 We think the $r$ matrices $X^{(1)},\dots, X^{(r)}$ as ``pages'' in a ``book". The $k$-th page contains $X^{(k)}$. 
We use the following (unusual) coordinate system to be consistent with the labelling of entries in a matrix:
\begin{center}
\begin{tikzpicture}[scale=1]
    \draw [<->,thick] (0,-1) node (yaxis) [right] {$x$}
        |- (1,0) node (xaxis) [below] {$y$};
\end{tikzpicture}
\end{center}

Define 
$$\mathcal{L}:=\{(i,j,k)\in\mathbb{Z}^3 \ | \  1\le k\le r, (i,j)\in [1,m_{{\rm t}(h_k)}]_\mathbb{Z}\times [1,m_{{\rm s}(h_k)}]_\mathbb{Z}\}. $$
For $\gamma\in {\rm V}_\mathcal{Q}$, 
define the following injective map (recall that $a_\gamma\times b_\gamma$ is the size of $A_\gamma$) 
$$\phi_{\gamma}: [1,a_\gamma]_\mathbb{Z}\times[1,b_\gamma]_\mathbb{Z}\to \mathcal{L}, \quad 
(p,q)\mapsto (i,j,k) \textrm{ if the $(p,q)$-entry of $A_\gamma$ is $x_{ij}^{(k)}$}.$$ 

    Denote $x_p=x_{ij}^{(k)}$ (if well-defined) for a lattice point $p=(i,j,k)\in\mathbb{Z}^3$.
    
		Given an arrow $\gamma_2\to \gamma_1$ in $\mathcal{Q}$, $u,v\in\mathbb{Z}_{>0}$, denote $A=A_{\gamma_1}$, $B=A_{\gamma_2}$. For $M\in D_{u}(A),N\in D_{v}(B)$,  define
		\[ L=\LCM(\LM(M),\LM(N))=x^{(r_1)}_{\alpha_1\beta_1}x^{(r_2)}_{\alpha_2\beta_2}\hdots x^{(r_l)}_{\alpha_l\beta_l}\]
		where $l$ is the degree of $L$, and the right side satisfies $x_{\alpha_i \beta_i}^{(r_i)} > x_{ \alpha_{i+1} \beta_{i+1} }^{(r_{i+1})}$ for all $i=1,...,l-1$.  
		Denote $p_i=(\alpha_i,\beta_i,r_i)\in\mathbb{Z}^3$ ( so $x_{p_i}=x_{\alpha_i \beta_i}^{(r_i)}$ and $L=\prod_{i=1}^l x_{p_i}$ ). 
Define 
		\[ S_M= \{ i: x_{p_i}\textrm{ divides }\LM(M)\},
		\quad
		 S_N=\{ i: x_{p_i} \textrm{ divides } \LM(N)\}, \textrm{ as subsets of $\{ 1,\hdots,l\}$}. \]
Let $\Sym_l$ be the symmetric group of $\{1,\dots,l\}$, and define its subgroups
$$\aligned
&\Sym(S_M)=\{\sigma\in \Sym_l:\sigma(i)=i \text{ for each } i\notin S_M \}, \\
&\Sym(S_N)=\{\tau\in \Sym_l:\tau(i)=i \text{ for each } i\notin S_N \}. \\
\endaligned$$
Define $G:=\Sym(S_M)\times \Sym(S_N)$. 
For $(\sigma,\tau)\in G$, define  
$$(\sigma,\tau)p_i=(\alpha_{\sigma(i)},\beta_{\tau(i)},r_i) \textrm{ for all }i=1,\hdots, l.$$ 
We say that $p_i$ is a \textit{fixed point} of $(\sigma,\tau)$ if $(\sigma,\tau)p_i=p_i$, or equivalently, 
if $\sigma(i)=i$ and  $\tau(i)=i$.  
Define 
		\[ L(\sigma,\tau) = \prod_{i=1}^l x_{(\sigma,\tau)p_i} = x^{(r_1)}_{\alpha_{\sigma(1)},\beta_{\tau(1)}}x^{(r_2)}_{\alpha_{\sigma(2)},\beta_{\tau(2)}}\hdots x^{(r_l)}_{\alpha_{\sigma(l)},\beta_{\tau(l)}} . \]
	In particular, $L(1,1)=L$. Note that $L(\sigma,\tau)$ may be non-squarefree.

For each $\sigma \in \Sym(S_M)$, define a polynomial
		$\displaystyle P_{\sigma,\cdot} = \sum_{\tau\in \Sym(S_N)}\sgn(\sigma)\sgn(\tau)L(\sigma,\tau).$
		
For each $\tau \in \Sym(S_N)$, define a polynomial
    	        $\displaystyle P_{\cdot,\tau} = \sum_{\sigma\in \Sym(S_M)}\sgn(\sigma)\sgn(\tau)L(\sigma,\tau).$
	\end{defn}

The following is a simple but crucial identity:  
	\begin{equation}\label{doublesum}
	\begin{split}
		\sum_{\sigma\in \Sym(S_M)}P_{\sigma,\cdot} &= \sum_{\sigma\in \Sym(S_M)} \sum_{\tau\in \Sym(S_N)} \sgn(\sigma)\sgn(\tau)L(\sigma,\tau) \\
		&= \sum_{\tau\in \Sym(S_N)} \sum_{\sigma\in \Sym(S_M)} \sgn(\sigma)\sgn(\tau)L(\sigma,\tau)
		= \sum_{\tau\in \Sym(S_N)}P_{\cdot,\tau}.
	\end{split}
	\end{equation}

\begin{exmp}\label{example1} (A key example)
Let $\mathcal{Q}:1{\leftarrow}2$, ${\bf m}=(3,3)$,  ${\bf u}=(1,1)$, and write $x^{(1)}_{ij}$ as $x_{ij}$.  
Let $M=\begin{vmatrix}x_{21}&x_{23}\\x_{31}&x_{33}\end{vmatrix}$,
$N=\begin{vmatrix}x_{12}&x_{13}\\x_{32}&x_{33}\end{vmatrix}$.
Take the order ``$>$'' where $x_{ij}>x_{kl}$ when $i<k$ or when $i=k$ and $j<l$.  
We use diagrams as a convenient way to represent $L$ and the newly created monomials.  The variables of $\LT(M)$ are represented by ``$\Circle$'' and are attached to horizontal rays on the left, while variables of $\LT(N)$ are represented by ``$\times$'' and are attached to vertical rays on the top; $\Sym(S_M)$ permutes the horizontal rays and $\Sym(S_N)$ permutes the vertical rays.  Then
$$\aligned
			P_{1,\cdot} &  =  L -L(1,(13))=  x_{21}N, &
			P_{(23),\cdot} & =  -L((23),1) + L((23),(13))= -x_{31} \left| \begin{matrix}
				x_{12} & x_{13} \\
				x_{22} & x_{23} \\
			\end{matrix} \right|, \\
\endaligned$$
$$\aligned			
			P_{\cdot,1} & = L -L((23),1)=  x_{12}M, &
			P_{\cdot,(13)} & = -L(1,(13)) + L((23),(13)) = -x_{13} \left| \begin{matrix}
				x_{21} & x_{22} \\
				x_{31} & x_{32} \\
			\end{matrix} \right|.  
\endaligned$$
See Figure \ref{fig:example1b}. 
\begin{figure}[h]
\begin{center}
\begin{tikzpicture}[scale=0.7]
			\draw (0,0) node[]{$\cdot$};
			\draw (0,-1) circle (1.75mm);
			\draw (2,0) node[]{$\cdot$};
			\draw (1,0) node[]{$\times$};
			\draw (1,-1) node[]{$\cdot$};
			\draw (2,-1) node[]{$\cdot$};
			\draw (0,-2) node[]{$\cdot$};
			\draw (1,-2) node[]{$\cdot$};
			\draw (2,-2) node[]{$\otimes$};
			\draw(-.5,-1)--(-.17,-1);
			\draw(1,0)--(1,.5);
			\draw (-.5,-2) -- (1.83,-2);
			\draw (2,.5)--(2,-1.83);
			\draw (1,-2.75) node[]{\small$L(1,1)=x_{12}x_{21}x_{33}$};
	\draw (1.3,0.3) node[]{\tiny{1}};
	\draw (0.3,-0.7) node[]{\tiny{2}};
	\draw (2.3,-1.7) node[]{\tiny{3}};
			\begin{scope}[shift={(6,0)}]
			\draw (0,0) node[]{$\cdot$};
			\draw (0,-2) circle (1.75mm);
			\draw (2,0) node[]{$\cdot$};
			\draw (1,0) node[]{$\times$};
			\draw (1,-1) node[]{$\cdot$};
			\draw (0,-1) node[]{$\cdot$};
			\draw (2,-1) node[]{$\cdot$};
			\draw (2,-2) node[]{$\cdot$};
			\draw (1,-2) node[]{$\cdot$};
			\draw (2,-1) node[]{$\otimes$};
			\draw(-.5,-2)--(-.17,-2);
			\draw(1,0)--(1,.5);
			\draw (-.5,-1) -- (1.83,-1);
			\draw (2,.5)--(2,-0.83);
			\draw (1,-2.75) node[]{\small$L((23),1)=x_{12}x_{31}x_{23}$};
	\draw (1.3,0.3) node[]{\tiny{1}};
	\draw (0.3,-1.7) node[]{\tiny{2}};
	\draw (2.3,-0.7) node[]{\tiny{3}};
			\end{scope}
			\begin{scope}[shift={(12,0)}]			
			\draw (0,0) node[]{$\cdot$};
			\draw (0,-1) circle (1.75mm);
			\draw (1,0) node[]{$\cdot$};
			\draw (2,0) node[]{$\times$};
			\draw (1,-1) node[]{$\cdot$};
			\draw (2,-1) node[]{$\cdot$};
			\draw (0,-2) node[]{$\cdot$};
			\draw (2,-2) node[]{$\cdot$};
			\draw (1,-2) node[]{$\otimes$};
			\draw(-.5,-1)--(-.17,-1);
			\draw(2,0)--(2,.5);
			\draw (-.5,-2) -- (0.83,-2);
			\draw (1,.5)--(1,-1.83);
			\draw (1,-2.75) node[]{\small$L(1,(13))=x_{13}x_{21}x_{32}$};
	\draw (2.3,0.3) node[]{\tiny{1}};
	\draw (0.3,-0.7) node[]{\tiny{2}};
	\draw (1.3,-1.7) node[]{\tiny{3}};
			\end{scope}
			\begin{scope}[shift={(18,0)}]
			\draw (0,0) node[]{$\cdot$};
			\draw (0,-2) circle (1.75mm);
			\draw (1,0) node[]{$\cdot$};
			\draw (2,0) node[]{$\times$};
			\draw (0,-1) node[]{$\cdot$};
			\draw (2,-1) node[]{$\cdot$};
			\draw (1,-2) node[]{$\cdot$};
			\draw (2,-2) node[]{$\cdot$};
			\draw (1,-1) node[]{$\otimes$};
			\draw(-.5,-1)--(0.83,-1);
			\draw(2,0)--(2,.5);
			\draw (-.5,-2) -- (-0.17,-2);
			\draw (1,.5)--(1,-0.83);
			\draw (1.5,-2.85) node[]{\small$L((23),(13))=x_{13}x_{31}x_{22}$};
	\draw (2.3,0.3) node[]{\tiny{1}};
	\draw (1.3,-0.7) node[]{\tiny{3}};
	\draw (0.3,-1.7) node[]{\tiny{2}};
			\end{scope}
			\end{tikzpicture}
\end{center}		
			\caption{Example \ref{example1}}
			\label{fig:example1b}
		\end{figure}
		
It follows from \eqref{doublesum}  that $P_{1,\cdot}+P_{(23),\cdot}= 	P_{\cdot,1}+P_{\cdot,(13)}$, so
$$S(M,N)=P_{\cdot,1}-P_{1,\cdot}=P_{(23),\cdot}-P_{\cdot,(13)}=
-x_{31} \left| \begin{matrix}
				x_{12} & x_{13} \\
				x_{22} & x_{23} \\
			\end{matrix} \right| 			
+x_{13} \left| \begin{matrix}
				x_{21} & x_{22} \\
				x_{31} & x_{32} \\
			\end{matrix} \right|
			$$
Note that the leading monomials of the two terms on the right side are both $<L$, so the condition (2) of Proposition \ref{chain} holds (with $M_0=M, M_1=N, i=1$). 
\end{exmp}
	
\subsection{Pseudominors}	
\begin{defn} \label{pseudominor}
		Given $\gamma\in {\rm V}_\mathcal{Q}$, and two lists (with possible repetition) $a_1,\dots,a_p\in\{1,\dots,m\}$, $b_1,\dots,b_q\in\{1,\dots,n\}$, we define a \textit{pseudosubmatrix} $A_\gamma(a_1,\dots,a_p;b_1,\dots,b_q)$ to be a matrix of size $p\times q$ whose $(i,j)$-entry is the $(a_i,b_j)$-entry of $A_\gamma$.  Its determinant is called a \textit{pseudominor}, and is denoted 
	\[ \minor_{A_\gamma}(a_1,\hdots,a_u;b_1,\hdots,b_u)
= \left|  \begin{matrix}
	x_{\phi_\gamma(a_1,b_1)} & \hdotsfor{1} & x_{\phi_\gamma(a_1,b_u)} \\
	\vdots & \ddots & \vdots \\
	x_{\phi_\gamma(a_u,b_1)} & \hdotsfor{1} & x_{\phi_\gamma(a_u,b_u)}
	\end{matrix} \right| 
 \]
 We call a pseudosubmatrix (and the corresponding pseudominor) \textit{trivial} if it has repeated rows or columns, otherwise we call it \textit{non-trivial}. 
	\end{defn}   
	
\begin{prop}\label{minor}
		Use the notation in Definition \ref{def} and denote $S_M=\{i_1,\hdots,i_u  \}$ and $S_N=\{j_1,\hdots,j_v \}$ where $i_1<i_2<\hdots <i_u$ and $j_1<j_2<\hdots <j_v$.
		  Then for $\tau \in \Sym(S_N)$, 
		\begin{align*}
			P_{\cdot,\tau} = \bigg( \sgn(\tau)\prod_{k\in S_N\setminus S_M} x^{(r_{k})}_{\alpha_{k},\beta_{\tau(k)}} \bigg) \left|  \begin{matrix}
				x^{(r_{i_1})}_{\alpha_{i_1},\beta_{\tau(i_1)}} & \hdotsfor{1} & x^{(r_{i_u})}_{\alpha_{i_1},\beta_{\tau(i_u)}} \\
				\vdots & \ddots & \vdots \\
				x^{(r_{i_1})}_{\alpha_{i_u},\beta_{\tau(i_1)}} & \hdotsfor{1} & x^{(r_{i_u})}_{\alpha_{i_u},\beta_{\tau(i_u)}}
			\end{matrix}
			\right| 
		\end{align*}
where the determinant is the pseudominor 
 $A(\alpha_{i_1},\dots,\alpha_{i_u};b_1,\dots,b_u)$ where $b_t$ (for $1\le t\le u$) are determined by $\phi_{\gamma_1}(*,b_t)=(*,\beta_{\tau(i_t)},r_{i_t})$. 
		  Similarly for $\sigma \in \Sym(S_M)$, 
		\begin{align*}
			P_{\sigma,\cdot} =  \bigg( \sgn(\sigma)\prod_{k\in S_M\setminus S_N} x^{(r_{k})}_{\alpha_{\sigma(k)},\beta_{k}} \bigg) 
			 \begin{vmatrix}
				x^{(r_{j_1})}_{\alpha_{\sigma(j_1)},\beta_{j_1}} & \hdotsfor{1} & x^{(r_{j_1})}_{\alpha_{\sigma(j_1)},\beta_{j_v}} \\
				\vdots & \ddots & \vdots \\
				x^{(r_{j_v})}_{\alpha_{\sigma(j_v)},\beta_{j_1}} & \hdotsfor{1} & x^{(r_{j_v})}_{\alpha_{\sigma(j_v)},\beta_{j_v}}
			\end{vmatrix}.   
		\end{align*}
where the determinant is the pseudominor $B(a_1,\dots,a_v;\beta_{j_1},\dots,\beta_{j_v})$ 
where $a_t$ (for $1\le t\le v$) are determined by $\phi_{\gamma_2}(a_t,*)=(\alpha_{j_t},*,r_{j_t})$. 
In particular, 	\[ P_{\cdot,1} = \dfrac{L}{\LT(M)} M\text{ and }P_{1,\cdot} =  \dfrac{L}{\LT(N)} N.\]
\end{prop}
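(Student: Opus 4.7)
The plan is to factor the fixed entries out of the sum defining $P_{\cdot,\tau}$ and recognize the rest as a determinant via the Leibniz formula. To begin, note that every $x_{p_i}$ divides $L=\LCM(\LM(M),\LM(N))$, hence divides $\LM(M)$ or $\LM(N)$, so $\{1,\dots,l\} = S_M\cup S_N$. Consequently for each $i\notin S_M$ we have $i\in S_N\setminus S_M$ and $\sigma(i)=i$ for all $\sigma\in\Sym(S_M)$, so the factor $x^{(r_i)}_{\alpha_i,\beta_{\tau(i)}}$ appearing in $L(\sigma,\tau)$ is $\sigma$-independent and can be pulled out of the sum. Writing $S_M=\{i_1<\cdots<i_u\}$, this yields
\[ P_{\cdot,\tau} = \sgn(\tau)\bigg(\prod_{k\in S_N\setminus S_M} x^{(r_k)}_{\alpha_k,\beta_{\tau(k)}}\bigg)\sum_{\sigma\in\Sym(S_M)}\sgn(\sigma)\prod_{t=1}^u x^{(r_{i_t})}_{\alpha_{\sigma(i_t)},\beta_{\tau(i_t)}}. \]

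Re-indexing $\sigma$ via the permutation $\pi\in\Sym_u$ with $\sigma(i_s)=i_{\pi(s)}$ (so $\sgn(\sigma)=\sgn(\pi)$), the inner sum becomes $\sum_\pi \sgn(\pi)\prod_t x^{(r_{i_t})}_{\alpha_{i_{\pi(t)}},\beta_{\tau(i_t)}}$, which by the Leibniz formula is precisely the determinant of the $u\times u$ matrix with $(s,t)$-entry $x^{(r_{i_t})}_{\alpha_{i_s},\beta_{\tau(i_t)}}$. Since each $i_t\in S_M$, the variable $x_{p_{i_t}}$ divides $\LM(M)$, so $X^{(r_{i_t})}$ is one of the horizontal blocks of $A=A_{\gamma_1}$; hence every entry of this matrix lies in $A$, specifically in row $\alpha_{i_s}$ and in the column $b_t$ determined by $\phi_{\gamma_1}(*,b_t)=(*,\beta_{\tau(i_t)},r_{i_t})$. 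This identifies the determinant with the pseudominor $A(\alpha_{i_1},\dots,\alpha_{i_u};b_1,\dots,b_u)$ as claimed. The formula for $P_{\sigma,\cdot}$ is then obtained by the symmetric argument, interchanging the roles of $M,N$, of $A,B$, and of $\Sym(S_M),\Sym(S_N)$.

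Specializing to $\tau=1$, the $(s,t)$-entry of the pseudominor becomes $x^{(r_{i_t})}_{\alpha_{i_s},\beta_{i_t}}$. Because the order is consistent with $A$, $\LT(M)$ is the product of the diagonal entries of the submatrix defining $M$, and the variables of $\LT(M)=\prod_{k\in S_M}x_{p_k}$ inherit from $L$ the same decreasing order as the diagonal; hence $\LT(M)=\prod_t x^{(r_{i_t})}_{\alpha_{i_t},\beta_{i_t}}$, and the row indices $\alpha_{i_1}<\cdots<\alpha_{i_u}$ and columns $b_1<\cdots<b_u$ identified above are exactly the rows and columns defining $M$. The pseudominor therefore equals $M$, while the prefactor equals $L/\prod_{k\in S_M}x_{p_k}=L/\LT(M)$, giving $P_{\cdot,1}=(L/\LT(M))M$; the identity $P_{1,\cdot}=(L/\LT(N))N$ follows by symmetry. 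The main care required is this index bookkeeping; the substantive content is a single invocation of the Leibniz formula on the factored sum.
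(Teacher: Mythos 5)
Your proposal is correct and follows essentially the same route as the paper's proof: factor the $\sigma$-independent variables (those indexed by $S_N\setminus S_M$) out of the sum, re-index the remaining alternating sum over $\Sym(S_M)$, and recognize it as a determinant via the Leibniz formula, then identify that determinant with the stated pseudominor of $A$. You supply somewhat more detail than the paper on the index bookkeeping for the pseudominor and on the specialization $\tau=1$, which the paper leaves as "obvious," but the argument is the same.
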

	
	\begin{proof} By symmetry, we only prove the identity of $P_{\cdot,\tau}$.
 Fix $\tau\in \Sym(S_N)$.  Then 
$$P_{\cdot,\tau}
=  \sum_{\sigma\in \Sym(S_M)}\sgn(\sigma) \sgn(\tau) \prod_{k=1}^l x_{(\sigma,\tau)p_k} 
= \bigg(\prod
		_{k\in S_N\setminus S_M} x_{(\sigma,\tau)p_k}
		  \bigg)
		\Delta
= \bigg(\prod
		_{k\in S_N\setminus S_M} x_{(1,\tau)p_k}
		  \bigg)
		\Delta,
$$	
where 
$$\Delta:= \sum_{\sigma\in \Sym(S_M)} \sgn(\sigma ) x_{(\sigma,\tau)p_k}
=
\left|  \begin{matrix}
		x_{\alpha_{i_1},\beta_{\tau(i_1)}}^{(r_{i_1})} & \hdotsfor{1} & x_{\alpha_{i_1},\beta_{\tau(i_u)}}^{(r_{i_u})} \\
		\vdots & \ddots & \vdots \\
		x_{\alpha_{i_u},\beta_{\tau(i_1)}}^{(r_{i_1})} & \hdotsfor{1} & x_{\alpha_{i_u},\beta_{\tau(i_u)}}^{(r_{i_u})}
		\end{matrix}
		\right|
$$
by  \eqref{leibniz}. It is obvious that $\Delta$ is the pseudominor in $A$ as stated in the proposition.
\end{proof}

\subsection{Conditions for $L(\sigma,\tau)=L(\sigma',\tau')$, $P_{\sigma,\cdot}=P_{\sigma',\cdot}$ and $P_{\cdot,\tau}=P_{\cdot,\tau'}$}  
	\begin{defn} \label{incidencedef}
		Use the notations as in Definition \ref{def}.  		
		An \textit{incidence} of $M$ and $N$ is a variable $x_{p_k}$ that divides both $\LT(M)$ and $\LT(N)$, that is, $k\in S_M\cap S_N$; by abuse of terminology, we also call $p_k$ an incidence of $M$ and $N$.  Define
		
		$\Sigma_j:=\{ i: r_i=j\}$;
		 	 
		$I_j :=\{ i\in \Sigma_j: p_i \text{ is an incidence} \}$ for $j=1\hdots r$ (so $S_M\cap S_N=\bigcup_1^r I_j$);
		
		$\Sym(I_j) :=\{\sigma\in \Sym_l:\sigma(i)=i,\ \forall i\notin I_j \}\le \Sym_l$;
		
		$\prod\Sym(I_j) :=\Sym(I_1)\times \hdots \times \Sym(I_r)=\{\sigma\in\Sym(S_M\cap S_N) :  r_i=r_{\sigma(i)} \}\le \Sym_l$;
		
		$\overline{\Sym(S_M)}:=\Sym(S_M)/\prod\Sym(I_j)$ and $\overline{\Sym(S_N)}:=\Sym(S_N)/\prod\Sym(I_j)$, where the quotients are as sets of right cosets;  
		
		$H:=\{(\pi,\pi):\pi\in \prod\Sym(I_j) \}$ which is a subgroup of $G:=\Sym(S_M)\times \Sym(S_N)$,  $G/H:=\{gH\}$ is the set of right cosets of $H$ in $G$.
	\end{defn}
	
	
	\begin{lem} \label{sufficient}
		If $\overline{(\sigma,\tau)}=\overline{(\sigma',\tau')}$ in the quotient $G/H$, then $L(\sigma,\tau)=L(\sigma',\tau')$.
	\end{lem}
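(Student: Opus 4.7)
The plan is to unwind the hypothesis $\overline{(\sigma,\tau)}=\overline{(\sigma',\tau')}$ in $G/H$ and prove the desired equality by reindexing the product that defines $L$. First I would spell out the hypothesis: there exists $\pi\in\prod\Sym(I_j)$ such that $(\sigma',\tau')=(\sigma\pi,\tau\pi)$. The crucial structural feature, built into the definition of $H$, is that the \emph{same} element $\pi$ appears in both coordinates; this is what lets the substitution in the next step work simultaneously in rows and columns.

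Next I would expand
$$L(\sigma',\tau')=\prod_{i=1}^{l}x_{\alpha_{\sigma\pi(i)},\,\beta_{\tau\pi(i)}}^{(r_i)}$$
and perform the index substitution $k=\pi(i)$, i.e.\ $i=\pi^{-1}(k)$, to obtain
$$L(\sigma',\tau')=\prod_{k=1}^{l}x_{\alpha_{\sigma(k)},\,\beta_{\tau(k)}}^{(r_{\pi^{-1}(k)})}.$$

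The heart of the proof is then to verify $r_{\pi^{-1}(k)}=r_k$ for every $k$. This is immediate from the description $\prod\Sym(I_j)=\{\sigma\in\Sym(S_M\cap S_N):r_i=r_{\sigma(i)}\}$. Indeed, if $k\notin S_M\cap S_N$ then $\pi^{-1}$ fixes $k$, and if $k\in S_M\cap S_N$ then $k\in I_{r_k}$; since $\pi^{-1}$ preserves each $I_j$, we get $\pi^{-1}(k)\in I_{r_k}$ as well. In either case $r_{\pi^{-1}(k)}=r_k$, so the product above collapses to $L(\sigma,\tau)$, completing the argument.

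There is no real obstacle here; the lemma is essentially a bookkeeping check whose content lies in the alignment between the subgroup $H$ (pairs $(\pi,\pi)$ with $\pi$ acting only within each $I_j$) and the structure of $L$ (a product whose indexing reads off $\alpha_\bullet$ and $\beta_\bullet$ via $\sigma$ and $\tau$ but keeps $r_i$ fixed). The only point worth flagging is that the coset convention must be read as right multiplication $(\sigma\pi,\tau\pi)$ rather than left multiplication, since the index substitution above is only valid in that direction.
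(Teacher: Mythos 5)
Your proof is correct and follows essentially the same approach as the paper's: write the coset relation as $(\sigma',\tau')=(\sigma\pi,\tau\pi)$ with $\pi\in\prod\Sym(I_j)$, expand the product defining $L(\sigma',\tau')$, and reindex via $k=\pi(i)$. The one thing you make explicit that the paper leaves implicit is the verification that $r_{\pi^{-1}(k)}=r_k$ (equivalently, that $(\sigma'\pi,\tau'\pi)p_i=(\sigma',\tau')p_{\pi(i)}$ because $\pi$ preserves page indices), which is precisely the point where membership in $\prod\Sym(I_j)$ rather than a larger subgroup is needed; this is a worthwhile clarification, as is your remark on the coset direction.
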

	\begin{proof}
Assume $\overline{(\sigma,\tau)}=\overline{(\sigma',\tau')}$. Then $\sigma=\sigma'\pi$ and $\tau=\tau'\pi$ for some $\pi\in \prod\Sym(I_j)$, 
thus
$L(\sigma,\tau) = \prod_i x_{(\sigma,\tau)p_i} = \prod_i x_{(\sigma'\pi,\tau'\pi)p_i} = \prod_i x_{(\sigma',\tau')p_{\pi(i)}} = \prod_i x_{(\sigma',\tau')p_{i}} = L(\sigma',\tau')$.
\end{proof}

	\begin{prop} \label{equalP} Let $\sigma,\sigma'\in \Sym(S_M)$ and $\tau,\tau'\in \Sym(S_N)$ be such that $P_{\sigma,\cdot}$ and $P_{\cdot,\tau}$ are non-trivial pseudominors.  Then
		\[ P_{\sigma,\cdot}=P_{\sigma',\cdot} \Leftrightarrow \overline{\sigma}=\overline{\sigma'} \,\,\mathrm{in}\,\, \overline{\Sym(S_M)},\quad
		P_{\cdot,\tau}=P_{\cdot,\tau'} \Leftrightarrow \overline{\tau}=\overline{\tau'} \,\,\mathrm{in}\,\, \overline{\Sym(S_N)}\] 
	Consequently, $P_{\overline{\sigma},\cdot}:=P_{\sigma,\cdot}$, and $P_{\cdot,\overline{\tau}}:=P_{\cdot,\tau}$ are well-defined. 
	\end{prop}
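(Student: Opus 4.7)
The two halves of the proposition are symmetric (interchange rows/columns and the roles of $A=A_{\gamma_1}$ and $B=A_{\gamma_2}$), so I focus on the first statement concerning $P_{\sigma,\cdot}$ versus $P_{\sigma',\cdot}$.

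For $(\Leftarrow)$, write $\sigma=\sigma'\pi$ for some $\pi\in\prod\Sym(I_j)$; since $I_j\subseteq S_M\cap S_N$, $\pi$ also lies in $\Sym(S_N)$, so I may reindex the defining sum $P_{\sigma,\cdot}=\sum_{\tau\in\Sym(S_N)}\sgn(\sigma)\sgn(\tau)L(\sigma,\tau)$ via the bijection $\tau\mapsto\tau\pi$ of $\Sym(S_N)$. Since $(\sigma,\tau\pi)=(\sigma',\tau)(\pi,\pi)$, Lemma \ref{sufficient} gives $L(\sigma,\tau\pi)=L(\sigma',\tau)$; combining with $\sgn(\sigma)\sgn(\tau\pi)=\sgn(\sigma')\sgn(\pi)^2\sgn(\tau)=\sgn(\sigma')\sgn(\tau)$, the sum re-assembles into $P_{\sigma',\cdot}$.

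For $(\Rightarrow)$, apply Proposition \ref{minor} to write $P_{\sigma,\cdot}=\sgn(\sigma)\,m_\sigma D_\sigma$, where $m_\sigma=\prod_{k\in S_M\setminus S_N}x^{(r_k)}_{\alpha_{\sigma(k)},\beta_k}$ and $D_\sigma$ is the $v\times v$ pseudominor determinant ($v=|S_N|$). The non-triviality hypothesis (pairwise distinct rows and columns of the pseudomatrix) means $D_\sigma$ is, up to a sign, an honest minor of $B=A_{\gamma_2}$; since $B$ is a matrix of pairwise distinct variables, its minors are pairwise distinct irreducible polynomials in the UFD $K[X]$ for $v\geq 2$ (the cases $v\leq 1$ are handled directly). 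Unique factorization applied to $\sgn(\sigma)\,m_\sigma D_\sigma=\sgn(\sigma')\,m_{\sigma'}D_{\sigma'}$ thus forces $m_\sigma=m_{\sigma'}$ as monic monomials and $D_\sigma=\pm D_{\sigma'}$; the latter identifies the two underlying minors of $B$, giving equality of the selected row set in $B$ (the column set is $\sigma$-independent).

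The finishing step uses a structural fact: because $M$ is a minor of the \emph{concatenated} matrix $A_{\gamma_1}$, whose rows are indexed by the single coordinate $\alpha$, the $\alpha$-values of indices in $S_M$ are pairwise distinct and the pairs $(r_k,\beta_k)$ are pairwise distinct within $S_M$. From $m_\sigma=m_{\sigma'}$ together with $(r,\beta)$-distinctness one reads off $\alpha_{\sigma(k)}=\alpha_{\sigma'(k)}$ for each $k\in S_M\setminus S_N$, and $\alpha$-injectivity in $S_M$ then yields $\sigma(k)=\sigma'(k)$. From the row-set equality produced by $D_\sigma=\pm D_{\sigma'}$, after cancelling identical contributions from $j\in S_N\setminus S_M$, one obtains $\{\alpha_{\sigma(j)}:j\in I_r\}=\{\alpha_{\sigma'(j)}:j\in I_r\}$ for each page $r$, and $\alpha$-injectivity again gives $\sigma(I_r)=\sigma'(I_r)$ as sets. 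Combined with $\sigma|_{S_M\setminus S_N}=\sigma'|_{S_M\setminus S_N}$, this says $\sigma^{-1}\sigma'$ fixes $S_M\setminus S_N$ pointwise and preserves each $I_r$ setwise, i.e.\ $\sigma^{-1}\sigma'\in\prod_j\Sym(I_j)$, so $\overline\sigma=\overline{\sigma'}$. The main obstacle is the identification of $D_\sigma$ as an honest minor of $B$ (via non-triviality plus the global distinctness of the $p_i$), after which UFD factorization does the heavy lifting; the final combinatorial bridge from polynomial identities to permutation identities rests critically on the $\alpha$-injectivity of $S_M$ — a consequence of the concatenated (not stacked) structure of $A_{\gamma_1}$.
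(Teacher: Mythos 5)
Your proof is correct and follows essentially the same route as the paper's: the $(\Leftarrow)$ direction reindexes the sum over $\Sym(S_N)$ by $\tau\mapsto\tau\pi$ and invokes Lemma \ref{sufficient}, and the $(\Rightarrow)$ direction factors $P_{\sigma,\cdot}$ via Proposition \ref{minor} into a monomial times a pseudominor, matches the two factors, and deduces $\sigma^{-1}\sigma'\in\prod\Sym(I_j)$ from $\alpha$-injectivity on $S_M$ together with the row-set equality of the two minors of $B$. The only presentational difference is that you justify the factor-matching by irreducibility of minors in the UFD $K[X]$ where the paper phrases it as extracting the monomial GCD of both sides; these are the same observation. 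You are right to flag the $v\le 1$ cases for separate treatment, and they are easy (a $1\times1$ pseudominor is a single variable, and the equality reduces to a monomial identity from which the same conclusions follow), though the paper glosses over them as well.
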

	\begin{proof}
		By symmetry, we only prove the first equivalence.
		
\noindent		``$\Leftarrow$'':  Suppose that $\overline{\sigma}=\overline{\sigma'}$. Then $\sigma'=\sigma\pi$ for some $\pi\in \prod\Sym(I_j)$, and $\pi$ induces an automorphism on $\Sym(S_N)$. We have 
		\begin{align*}
			P_{\sigma',\cdot} &= \sum_{\tau' \in \Sym(S_N)} \sgn(\sigma')\sgn(\tau')L(\sigma',\tau') 
			= \sum_{\tau \in \Sym(S_N)} \sgn(\sigma \pi)\sgn(\tau \pi)L(\sigma \pi,\tau \pi) \\
			&= \sum_{\tau \in \Sym(S_N)} \sgn(\sigma \pi)\sgn(\tau \pi)L(\sigma,\tau)= \sum_{\tau \in \Sym(S_N)} \sgn(\sigma)\sgn(\tau)L(\sigma,\tau) = P_{\sigma,\cdot}  .
		\end{align*}
where the third equality follows from Lemma \ref{sufficient}.
		
\noindent	``$\Rightarrow$'':  Suppose that $\sigma, \sigma'$ are such that $P_{\sigma,\cdot}=P_{\sigma',\cdot}$ and let $\pi=\sigma^{-1}\sigma'\in\Sym(S_M)$.  Then by Proposition \ref{minor}, 
	{	\[{\tiny
		\prod_{k\in S_M\setminus S_N} x_{\alpha_{\sigma(k)}\beta_{k}}^{(r_k)} 
		\left|  \begin{matrix}
		x_{\alpha_{\sigma(j_1)},\beta_{j_1}}^{(r_{j_1})} & \hdotsfor{1} & x_{\alpha_{\sigma(j_1)},\beta_{j_v}}^{(r_{j_1})} \\
		\vdots & \ddots & \vdots \\
		x_{\alpha_{\sigma(j_v)},\beta_{j_1}}^{(r_{j_v})} & \hdotsfor{1} & x_{\alpha_{\sigma(j_v)},\beta_{j_v}}^{(r_{j_v})}
		\end{matrix}
		\right| 
		  = \pm \prod_{k\in S_M\setminus S_N} x_{\alpha_{\sigma'(k)}\beta_{k}}^{(r_k)} \left|  \begin{matrix}
		x_{\alpha_{\sigma'(j_1)},\beta_{j_1}}^{(r_{j_1})} & \hdotsfor{1} & x_{\alpha_{\sigma'(j_1)},\beta_{j_v}}^{(r_{j_1})} \\
		\vdots & \ddots & \vdots \\
		x_{\alpha_{\sigma'(j_v)},\beta_{j_1}}^{(r_{j_v})} & \hdotsfor{1} & x_{\alpha_{\sigma'(j_v)},\beta_{j_v}}^{(r_{j_v})}
		\end{matrix}
		\right| . 
		}\] }
	\normalsize 
	
\noindent 
Taking the greatest common divisors of both sides, we get 

(a)  $\prod_{k\in S_M\setminus S_N}x_{\alpha_{\sigma(k)},\beta_{k}}^{(r_k)} 
= \prod_{k\in S_M\setminus S_N} x_{\alpha_{\sigma'(k)},\beta_{k}} ^{(r_k)}$; 

(b) the two determinants (which are pseudominors of $B$) are equal up to a sign.

It follows from (a) that $\alpha_{\sigma(k)}=\alpha_{\sigma'(k)}$ for $k\in S_M\setminus S_N$.  
Since $\alpha_i$ ($i\in S_M$) are all distinct, 
we have $\sigma(k)=\sigma'(k)$, $\pi(k)=k$, for all $k\in S_M\setminus S_N$.
It follows from (b) that for any $j$, the tuples 
$\big(\alpha_{\sigma(k)}\big)_{k\in I_j}$ and 
$\big(\alpha_{\sigma'(k)}\big)_{k\in I_j}$ are equal up to a permutation. 
Note that for $k\in S_M\cap S_N$, $\sigma(k)$ and $\sigma'(k)$ are also in $S_M\cap S_N$, so 
 $\alpha_i $ (for $i\in S_M\cap S_N$) are all distinct.
Thus for $k\in I_j$, $\pi(k)=\sigma^{-1}\sigma'(k)\in I_j$. 
Then $  \pi \in \prod\Sym(I_j)$ and $\overline{\sigma}=\overline{\sigma'} \text{ in } \overline{\Sym(S_M)}$.
	\end{proof}

\subsection{Violation}	
	\begin{defn}\label{violation}
	Use the notation in Definition \ref{def}. 
		Let $i,j,k$ be distinct indices such that $i\in S_M,j\in S_N,k\in S_M\cap S_N$. 
		If
		$\alpha_i \le \alpha_j<\alpha_k$, 
		$\beta_j \le \beta_i<\beta_k$,
		$(\alpha_i,\beta_i)\neq(\alpha_j,\beta_j)$, 
		and 
		$r_i=r_j=r_k$, 
		then the triple $(p_i,p_j,p_k)$ is called a \textit{violation} of $(M,N)$.  
		If furthermore the strict inequalities $\alpha_i < \alpha_j$ and $\beta_j < \beta_i$ hold, then the triple is called a \textit{strict violation} of $(M,N)$. 
	\end{defn}

	In light of the diagrams introduced in Example \ref{example1}, we may visualize a violation as the intersection of a horizontal ray and a vertical ray lying NW of an incidence in the same page. There are three possible cases as shown in Figure \ref{fig:violation}.
	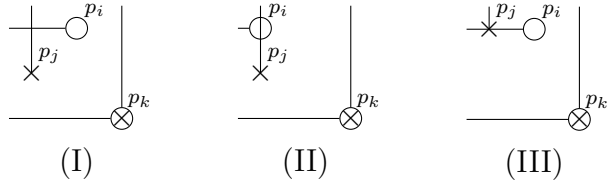
\begin{figure}[htbp]
		\centering
		$ 
		\begin{tikzpicture}[baseline={([yshift=-2em] current bounding box.north)},scale=0.6]
		\draw (1,0) circle (2.4mm);
		\draw (0,-1) node[]{$\times$};
		\draw (2,-2) node[]{$\times$};
		\draw (2,-2) circle (2.4mm);
		\draw (-.5,0) -- (0.75,0);
		\draw (0,.5) -- (0,-1);
		\draw (-.5,-2) -- (1.75,-2);
		\draw (2,.5)--(2,-1.75);
		\draw (1.4,0.4) node[]{\tiny{$p_i$}};
		\draw (0.4,0.4-1) node[]{\tiny{$p_j$}};
		\draw (2.4,0.4-2) node[]{\tiny{$p_k$}};
		\draw (1,-3) node{(I)};
		\end{tikzpicture}
		\hspace{1cm}
		\begin{tikzpicture}[baseline={([yshift=-2em] current bounding box.north)},scale=0.6]
		\draw (0,0) circle (2.4mm);
		\draw (0,-1) node[]{$\times$};
		\draw (2,-2) node[]{$\times$};
		\draw (2,-2) circle (2.4mm);
		\draw (-.5,0) -- (-0.25,0);
		\draw (0,.5) -- (0,-1);
		\draw (-.5,-2) -- (1.75,-2);
		\draw (2,.5)--(2,-1.75);
		\draw (0.4,0.4) node[]{\tiny{$p_i$}};
		\draw (0.4,0.4-1) node[]{\tiny{$p_j$}};
		\draw (2.4,0.4-2) node[]{\tiny{$p_k$}};
		\draw (1,-3) node{(II)};
		\end{tikzpicture}
		\hspace{1cm}
		\begin{tikzpicture}[baseline={([yshift=-2em] current bounding box.north)},scale=0.6]
		\draw (1,0) circle (2.4mm);
		\draw (0,0) node[]{$\times$};
		\draw (2,-2) node[]{$\times$};
		\draw (2,-2) circle (2.4mm);
		\draw (-.5,0) -- (0.75,0);
		\draw (0,.5) -- (0,0);
		\draw (-.5,-2) -- (1.75,-2);
		\draw (2,.5)--(2,-1.75);
		\draw (1.4,0.4) node[]{\tiny{$p_i$}};
		\draw (0.4,0.4) node[]{\tiny{$p_j$}};
		\draw (2.4,0.4-2) node[]{\tiny{$p_k$}};
		\draw (1,-3) node{(III)};
		\end{tikzpicture} $
		\caption{Three cases of violations, where (I) is a strict violation.
		}
		\label{fig:violation}
	\end{figure}
		Note that  $i\in S_M\setminus S_N$ and $j\in S_N\setminus S_M$. 
	
	Fix $(\sigma,\tau)\in G$ such that $L(\sigma,\tau)\neq L$. Let 
	$p_1>p_2>\hdots >p_l$, $q_1\geq q_2 \geq \hdots \geq q_l$
	be points corresponding to the variables of $L$ and $L(\sigma,\tau)$, respectively.  
	Define 
	\[ \jh =\min \{1\leq i\leq l :  p_i\neq q_i \}, \quad \kh =\min \{1\leq i\leq l:  (\sigma,\tau)p_i\neq p_i \} . \]
	It is easy to see that 
	\begin{equation}\label{atmostk}
		p_i \text{ is not a fixed point of } (\sigma,\tau) \quad\Longrightarrow \quad \kh\leq i 
		\quad \Longleftrightarrow \quad  p_{\kh}\ge p_i 
	\end{equation}

	\begin{lem}\label{notfixed}
		If $p_i$ is not a fixed point of $(\sigma,\tau)$, then so are  $p_{\sigma(i)}$ and $p_{\tau(i)}$.
	\end{lem}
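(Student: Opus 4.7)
The plan is to argue by contrapositive: I will show that if $p_{\sigma(i)}$ is a fixed point of $(\sigma,\tau)$, then $p_i$ must itself have been a fixed point. The case of $p_{\tau(i)}$ is then handled by the completely symmetric argument obtained by interchanging the roles of $\sigma$ and $\tau$.

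First I would unpack the hypothesis using the definition given right before the lemma statement: $p_{\sigma(i)}$ being a fixed point of $(\sigma,\tau)$ means simultaneously $\sigma(\sigma(i))=\sigma(i)$ and $\tau(\sigma(i))=\sigma(i)$. Since $\sigma\in\Sym(S_M)\subseteq \Sym_l$ is a bijection on $\{1,\dots,l\}$, I can cancel on the left of the first equation (apply $\sigma^{-1}$) to conclude $\sigma(i)=i$. Substituting this back into the second equation, which now reads $\tau(i)=i$, I obtain both $\sigma(i)=i$ and $\tau(i)=i$, i.e.\ $p_i$ is a fixed point of $(\sigma,\tau)$, contradicting the hypothesis.

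For $p_{\tau(i)}$, the analogous step is: $p_{\tau(i)}$ a fixed point gives $\tau(\tau(i))=\tau(i)$ and $\sigma(\tau(i))=\tau(i)$; bijectivity of $\tau$ yields $\tau(i)=i$, and then the second equation forces $\sigma(i)=i$, again producing the contradiction that $p_i$ is fixed.

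I do not anticipate a real obstacle here; the only subtlety worth noting is that although $\sigma$ and $\tau$ have different ``natural'' supports (inside $S_M$ and $S_N$ respectively), both are genuine permutations of the full index set $\{1,\dots,l\}$ by construction in Definition \ref{def}, so the cancellation $\sigma(\sigma(i))=\sigma(i)\Rightarrow\sigma(i)=i$ and its $\tau$-analogue are valid without any case distinction on whether the indices lie in $S_M$, $S_N$, or their intersection.
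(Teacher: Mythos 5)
Your proof is correct and follows essentially the same contrapositive strategy as the paper: assume $p_{\sigma(i)}$ is fixed and deduce $\sigma(i)=i$, hence that $p_i$ is fixed. Your version is in fact slightly cleaner, since by invoking the ``equivalently'' clause of the fixed-point definition you can cancel $\sigma$ directly and avoid both the paper's case split on $i\in S_M$ versus $i\notin S_M$ and its appeal to the $\alpha$-coordinate comparison via $(\sigma,\tau)^{-1}$.
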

	\begin{proof}
Assume $p_{\sigma(i)}$ is a fixed point. We shall show that $\sigma(i)=i$, thus  $p_i=p_{\sigma(i)}$ is a fixed point.
	This is obvious if $i\notin S_M$, so we assume $i\in S_M$. Then $\sigma(i)\in S_M$, and $(\sigma,\tau)p_{\sigma(i)}=p_{\sigma(i)}$ $\Rightarrow$ $(\alpha_{\sigma(i)},\beta_{\sigma(i)},r_{\sigma(i)})=p_{\sigma(i)}=(\sigma,\tau)^{-1}p_{\sigma(i)}=(\alpha_{i},\beta_{\tau^{-1}\sigma(i)},r_{\sigma(i)})$ $\Rightarrow$ $\alpha_{\sigma(i)}=\alpha_i$ 
	$\Rightarrow$ $\sigma(i)=i$. 
	\end{proof}

	\begin{lem}\label{largerimage}
		For any $i$, if $(\sigma,\tau)p_i > p_i$ and $(\sigma,\tau)p_i\geq p_{\kh}$, then the following holds:
		
		{\rm(i)} $i\in S_M\cap S_N$, $\sigma(i)\in S_M$, $\tau(i)\in S_N$;
		
		{\rm(ii)} $r_{\sigma(i)}=r_{\tau(i)}=r_i$;
		
		{\rm(iii)} $\alpha_{\sigma(i)}\leq \alpha_{\tau(i)}< \alpha_i$, $\beta_{\tau(i)}\leq \beta_{\sigma(i)}< \beta_i$.

\noindent As a consequence, the points $(\sigma,\tau)p_i$, $p_{\sigma(i)}$, $p_{\tau(i)}$ and $p_i$ all lie on the same $r_i$-th page, and there are four possible arrangements as shown in Figure \ref{fig:Remark:cases}. 
\begin{figure}[h]
		\begin{center}
			\begin{tikzpicture}[baseline={([yshift=-.5em] current bounding box.north)},scale=0.4]
			\draw[dashed] (-2,0) -- (4,0) ;
			\draw[dashed] (0,1.75) -- (0,-2)node[]{$\times$} ;
			\draw[dashed] (0,-2) -- (6,-2);
			\draw[dashed] (4,0) -- (4,-3.5);
			\filldraw (0,0) circle (1mm);
			\draw (4,0) circle (3.4mm);
			\draw (5.5,-3) circle (3.4mm) node[] {$\times$};
			\draw (4,0)node[anchor=south west]{$p_{\sigma(i)}$};
			\draw (0,-2)node[anchor=north east]{$p_{\tau(i)}$};
			\draw (0,0) node[anchor=south east] {$(\sigma,\tau)p_i$};
			\draw (5.5,-3) node[anchor=north west]{$p_i$};
			\node at (2,-5) {{\rm(I)} $\alpha_{\sigma(i)}<\alpha_{\tau(i)}, \beta_{\tau(i)}<\beta_{\sigma(i)}$}; 
			\end{tikzpicture}
			\hspace{.5cm}
			\begin{tikzpicture}[baseline={([yshift=-.5em] current bounding box.north)},scale=0.4]
			\draw[dashed] (-2,0) -- (0,0) ;
			\draw[dashed] (0,1.75) -- (0,-2)node[]{$\times$} -- (0,-3.5);
			\draw[dashed] (0,-2) -- (2,-2);
			\filldraw (0,0) circle (1mm);
			\draw (0,0) circle (3.4mm);
			\draw (1.5,-3) circle (3.4mm) node[] {$\times$};
			\draw (0,-2)node[anchor=north east]{$p_{\tau(i)}$};
			\draw (0,0) node[anchor=south west] {$(\sigma,\tau)p_i=p_{\sigma(i)}$};
			\draw (1.5,-3) node[anchor=north west]{$p_i$};
			\node at (0.5,-5) {{\rm(II)}  $\alpha_{\sigma(i)}<\alpha_{\tau(i)}, \beta_{\tau(i)}=\beta_{\sigma(i)}$}; 
			\end{tikzpicture} \\
			\vspace{.5cm}
			\begin{tikzpicture}[baseline={([yshift=-.5em] current bounding box.north)},scale=0.4]
			\draw[dashed] (-2,0) -- (5,0) ;
			\draw[dashed] (0,1.75) -- (0,0)node[]{$\times$} ;
			\draw[dashed] (3,0) -- (3,-1.5);
			\filldraw (0,0) circle (1mm);
			\draw (3,0) circle (3.4mm);
			\draw (4.5,-1) circle (3.4mm) node[] {$\times$};
			\draw (3,0)node[anchor=south west]{$p_{\sigma(i)}$};
			\draw (0,0) node[anchor=north east] {$(\sigma,\tau)p_i=p_{\tau(i)}$};
			\draw (4.5,-1) node[anchor=north west]{$p_i$};
			\node at (1.5,-3) {{\rm(III)}  $\alpha_{\sigma(i)}=\alpha_{\tau(i)}, \beta_{\tau(i)}<\beta_{\sigma(i)}$}; 
			\end{tikzpicture}
			\hspace{.5cm}
			\begin{tikzpicture}[baseline={([yshift=-.5em] current bounding box.north)},scale=0.4]
			\draw[dashed] (-2,0) -- (3,0) ;
			\draw[dashed] (0,1.75) -- (0,0)node[]{$\times$} -- (0,-1.5) ;
			\filldraw (0,0) circle (1mm);
			\draw (0,0) circle (3.4mm);
			\draw (1.5,-1) circle (3.4mm) node[] {$\times$};
			\draw (0,0) node[anchor=south west] {$(\sigma,\tau)p_i=p_{\sigma(i)}=p_{\tau(i)}$};
			\draw (1.5,-1) node[anchor=north west]{$p_i$};
			\node at (0.5,-3) {{\rm(IV)} $\alpha_{\sigma(i)}=\alpha_{\tau(i)}, \beta_{\tau(i)}=\beta_{\sigma(i)}$}; 
			\end{tikzpicture}
		\end{center}
		\caption{Figure of Lemma \ref{largerimage}}
		\label{fig:Remark:cases}
	\end{figure}
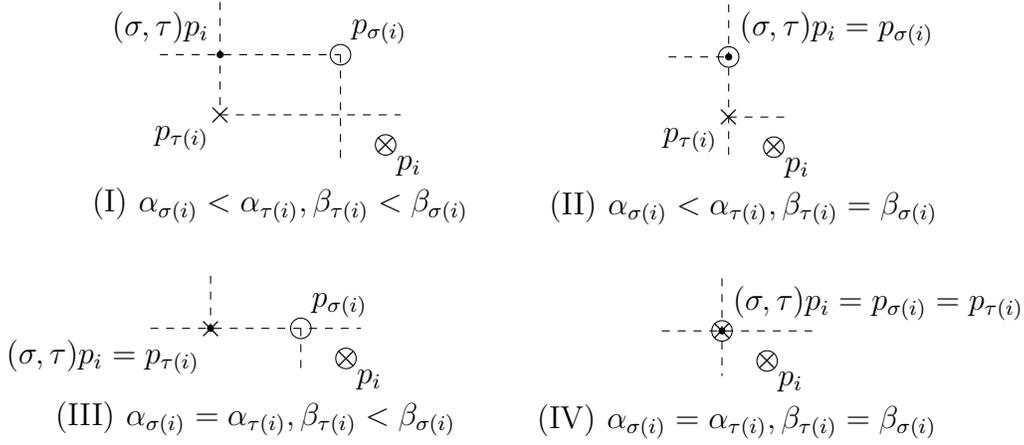
	\end{lem}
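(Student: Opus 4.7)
The plan is to prove (i), (ii), (iii) in order, using two tools throughout. First, by Lemma~\ref{notfixed} combined with \eqref{atmostk}, whenever $\sigma(i)\ne i$ (resp.\ $\tau(i)\ne i$) the image $p_{\sigma(i)}$ (resp.\ $p_{\tau(i)}$) is non-fixed and hence $\le p_{\hat k}$; together with the hypothesis $(\sigma,\tau)p_i\ge p_{\hat k}$ this yields the sandwich
\[
x_{(\sigma,\tau)p_i}\ \ge\ x_{p_{\sigma(i)}}\quad\text{and}\quad x_{(\sigma,\tau)p_i}\ \ge\ x_{p_{\tau(i)}}.
\]
Second, since the lex order is consistent with $A=A_{\gamma_1}$ and $B=A_{\gamma_2}$, the indices have geometric meaning: for $j\in S_M$ the $\alpha_j$ is a (pairwise distinct) row of $A$, for $j\in S_N$ the $\beta_j$ is a (pairwise distinct) column of $B$, and the variables of $\LT(M), \LT(N)$ sit in strict NW--SE position in $A, B$ respectively.

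For (i), I proceed by contradiction: if $i\in S_N\setminus S_M$, then $\sigma(i)=i$ and $\tau(i)\ne i$. Both $(\sigma,\tau)p_i=(\alpha_i,\beta_{\tau(i)},r_i)$ and $p_{\tau(i)}$ lie in $B$ and share the column $\beta_{\tau(i)}$. Reading $(\sigma,\tau)p_i>p_i$ as a same-row comparison in $B$ gives $\beta_{\tau(i)}<\beta_i$; the strict NW--SE position of $p_i,p_{\tau(i)}\in\LT(N)$ in $B$ then forces the $B$-row of $p_{\tau(i)}$ to be strictly smaller than the $B$-row of $p_i$. But the sandwich $(\sigma,\tau)p_i\ge p_{\tau(i)}$, read as a same-column comparison in $B$, asserts that the $B$-row of $(\sigma,\tau)p_i$---which equals the $B$-row of $p_i$---is $\le$ the $B$-row of $p_{\tau(i)}$, a contradiction. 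The case $i\in S_M\setminus S_N$ is symmetric under $M\leftrightarrow N$, $\sigma\leftrightarrow\tau$, $A\leftrightarrow B$, and $i\notin S_M\cup S_N$ would render $p_i$ fixed, so $i\in S_M\cap S_N$.

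For (ii), whenever $\sigma(i)\ne i$, comparing $(\sigma,\tau)p_i$ and $p_{\sigma(i)}$ in $A$ (they share row $\alpha_{\sigma(i)}$) and using the NW--SE position of $p_i, p_{\sigma(i)}\in\LT(M)$ in $A$ squeezes the $A$-block of $p_{\sigma(i)}$ between the two sides, forcing $r_{\sigma(i)}=r_i$; symmetrically $r_{\tau(i)}=r_i$. With all relevant points now on page $r_i$, the sandwich reads within this single page: along row $\alpha_{\sigma(i)}$ it gives $\beta_{\tau(i)}\le\beta_{\sigma(i)}$, and along column $\beta_{\tau(i)}$ it gives $\alpha_{\sigma(i)}\le\alpha_{\tau(i)}$. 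I then rule out $\sigma(i)=i$ and $\tau(i)=i$ (each is incompatible with the other sandwich inequality combined with NW--SE on page $r_i$: if, say, $\tau(i)=i$, the same-column comparison with $p_i$ forces $\alpha_{\sigma(i)}<\alpha_i$, NW--SE then forces $\beta_{\sigma(i)}<\beta_i$, while the sandwich forces $\beta_i\le\beta_{\sigma(i)}$). Finally, NW--SE on page $r_i$ applied to $(p_i, p_{\tau(i)})$ and $(p_i, p_{\sigma(i)})$ upgrades the weak inequalities $\beta_{\tau(i)}<\beta_i$ and $\alpha_{\sigma(i)}<\alpha_i$ to the strict bounds $\alpha_{\tau(i)}<\alpha_i$ and $\beta_{\sigma(i)}<\beta_i$, completing (iii). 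The main obstacle is this block-collapse step appearing in both (i) and (ii): a lex order consistent with each $A_\gamma$ is only a linear extension of the underlying row/column partial orders, so a priori $p_{\sigma(i)}$ and $(\sigma,\tau)p_i$ may occupy different blocks, and the decisive leverage for collapsing them each time comes from pairing the $\ge p_{\hat k}$ hypothesis with the strict NW--SE rigidity of $\LT(M)$ and $\LT(N)$.
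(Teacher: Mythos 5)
Your proofs of (i) and (iii) track the paper's: for (i) you use the same sandwich $x_{(\sigma,\tau)p_i}\ge x_{p_{\tau(i)}}$ (from Lemma~\ref{notfixed} plus \eqref{atmostk}) together with NW--SE rigidity of $\LT(N)$, landing on the row contradiction in $B$ rather than the paper's ``not NW--SE in $A$'' phrasing, but that is the same idea under $M\leftrightarrow N$ symmetry. Once (ii) is in hand, your derivation of the weak and strict inequalities in (iii) is also fine (the two strict inequalities $\alpha_{\sigma(i)}<\alpha_i$ and $\beta_{\tau(i)}<\beta_i$ each imply the other via the NW--SE chains, so getting one of them from $(\sigma,\tau)p_i>p_i$ on page $r_i$ is enough).

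The gap is in (ii). You claim that comparing $(\sigma,\tau)p_i$ with $p_{\sigma(i)}$ in $A$, combined with NW--SE of $p_i,p_{\sigma(i)}$ in $A$, ``squeezes the $A$-block of $p_{\sigma(i)}$'' and forces $r_{\sigma(i)}=r_i$. The sandwich along row $\alpha_{\sigma(i)}$ only gives the one-sided bound $r_i\le r_{\sigma(i)}$ (this is inequality \eqref{east}). There is no second bound coming from the $A$-side alone: if $p_{\sigma(i)}<p_i$ (equivalently $\alpha_i<\alpha_{\sigma(i)}$), then the $A$-column of $p_{\sigma(i)}$ lies strictly east of both the $A$-column of $p_i$ and the $A$-column of $(\sigma,\tau)p_i$, and nothing in $A$ pins it to page $r_i$; a later page for $p_{\sigma(i)}$ is perfectly compatible with $(\sigma,\tau)p_i\ge p_{\sigma(i)}$, with $p_i,p_{\sigma(i)}$ being NW--SE in $A$, and with $(\sigma,\tau)p_i>p_i$. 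The paper closes this case only by hopping to the $B$-side: assuming $r_i<r_{\sigma(i)}$, it derives $\alpha_i<\alpha_{\sigma(i)}$, hence $\beta_{\tau(i)}<\beta_i$ from $(\sigma,\tau)p_i>p_i$, hence $p_{\tau(i)}$ NW of $p_i$ in $B$, which together with \eqref{south} forces $r_{\tau(i)}=r_i$ and $\alpha_{\sigma(i)}\le\alpha_{\tau(i)}<\alpha_i$, contradicting $\alpha_i<\alpha_{\sigma(i)}$. This cross-over between the $A$-inequalities and the $B$-inequalities, linked through the common point $(\sigma,\tau)p_i$, is the missing mechanism; a one-sided ``squeeze'' cannot substitute for it.
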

	
	\begin{proof}
	For convenience, we say that a point $(a_2,b_2)$ is weakly to the South of $(a_1,b_1)$ if $a_2\ge a_1$ and $b_2=b_1$; 
	$(a_2,b_2)$ is weakly to the SE of $(a_1,b_1)$ if $a_2\ge a_1$ and $b_2\ge b_1$; similar terminology will be used for other directions.

		Note that $p_{\tau(i)}=(\alpha_{\tau(i)},\beta_{\tau(i)},r_i)$ and $(\sigma,\tau)p_i=(\alpha_{\sigma(i)},\beta_{\tau(i)},r_i)$ are in the same column in the matrix $B$.  Since $p_i$ is not fixed, $p_{\tau(i)}$ is not fixed by Lemma \ref{notfixed}. Then $(\sigma,\tau)p_i\geq p_{\kh}\geq p_{\tau(i)}$ by \eqref{atmostk}.  
		Therefore, $p_{\tau(i)}$ is weakly to the South of $(\sigma,\tau)p_i$ in matrix $B$.  Similarly, $p_{\sigma(i)}$ is weakly to the East of $(\sigma,\tau)p_i$ in matrix $A$.  Thus,
		\begin{equation}\label{south}
			r_i<r_{\tau(i)} \textrm{ or } ( r_i=r_{\tau(i)}\text{ and } \alpha_{\sigma(i)}\leq \alpha_{\tau(i)} )
		\end{equation}
		\begin{equation}\label{east}
			r_i<r_{\sigma(i)} \textrm{ or } ( r_i=r_{\sigma(i)}  \text{ and }  \beta_{\tau(i)}\leq \beta_{\sigma(i)} ) 
		\end{equation}
		
		(i) Suppose that $i\notin S_M\cap S_N$. 
Without loss of generality, assume that $i\in S_M\setminus S_N$.  Then $\sigma(i)\in S_M$ and $\tau(i)=i$.  Observe that $\sigma(i)\neq i$ since
 $(\sigma,\tau)p_i \neq p_i$.  Therefore, \eqref{south} and \eqref{east} reduce to $\alpha_{\sigma(i)}\leq \alpha_i$ and ``$r_i<r_{\sigma(i)} \textrm{ or } ( r_i=r_{\sigma(i)}  \text{ and }  \beta_{i}\leq \beta_{\sigma(i)} )$''.  
Thus, $p_i$ and $p_{\sigma(i)}$ are distinct and not in NW-SE position in $A$,
which contradicts the fact that both $x_{p_i},x_{p_{\sigma(i)}}$ divide $\LT(M)$.  So $i\in S_M\cap S_N$, and thus $\sigma(i)\in S_M$ and $\tau(i)\in S_N$.  
		
		(ii) By symmetry we only need to show  $r_{\sigma(i)}=r_i$. Assume not, then by \eqref{east} we have $r_i<r_{\sigma(i)}$, thus $p_i>p_{\sigma(i)}$. 
It follows from (i) that both $i$ and $\sigma(i)$ are in $S_M$, so $p_i$ is to the NW of $p_{\sigma(i)}$ in $A$, thus $\alpha_i<\alpha_{\sigma(i)}$. 
Then  $\beta_i> \beta_{\tau(i)}$ because of the assumption $(\sigma,\tau)p_i>p_i$. 
Then $p_{\tau(i)}$ is to the NW of $p_i$ in $B$ because both $i$ and $\tau(i)$ are in $S_N$.
Comparing with \eqref{south},  we get
$r_{\tau(i)}=r_i$ 
and $\alpha_{\sigma(i)}\le \alpha_{\tau(i)}<\alpha_i$, but this contradicts the conclusion $\alpha_i<\alpha_{\sigma(i)}$.

		(iii) By (ii), \eqref{south} and \eqref{east} reduce to 
$\alpha_{\sigma(i)}\leq \alpha_{\tau(i)}$, $\beta_{\tau(i)}\leq \beta_{\sigma(i)}$; so $p_{\sigma(i)}$ is weakly to the NE of $p_{\tau(i)}$. 
It remains to prove
$\alpha_{\tau(i)}< \alpha_i$ and $\beta_{\sigma(i)}< \beta_i$. By symmetry we only prove the former. We consider two cases. 
Case 1: $\alpha_{\tau(i)}=\alpha_i$. Then $\tau(i)=i$,  $p_{\tau(i)}$ is weakly to the NE of $p_i=p_{\sigma(i)}$. But this is only possible when $\tau(i)=i$.  It then implies $(\sigma,\tau)p_i=p_i$, a contradiction. 
Case 2: $\alpha_{\tau(i)}>\alpha_i$. Then $\tau(i)>i$,  $p_{\tau(i)}$ is to the SE of $p_i$, and $\beta_i<\beta_{\tau(i)}\le \beta_{\sigma(i)}$. But then $(\sigma,\tau)p_i$ lies to the SE of $p_i$, which implies $(\sigma,\tau)p_i<p_i$, again a contradiction.
	\end{proof}
	
	\begin{prop}\label{noviolation}
Use the notation in Definition \ref{def}. 

{\rm(a)} $L(\sigma,\tau)=L$ if and only if $(\sigma,\tau)\in H$.
		
{\rm(b)} There exists a strict violation of $(M,N)$ if and only if there exists $(\sigma,\tau)\in G$ such that $L(\sigma,\tau) > L$. 
	\end{prop}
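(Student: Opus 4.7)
The plan is to prove parts (a) and (b) separately, using Lemma~\ref{sufficient} and Lemma~\ref{largerimage} as the main tools.

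For (a)(\(\Leftarrow\)), this is immediate from Lemma~\ref{sufficient}: $(\sigma,\tau)\in H$ means $\overline{(\sigma,\tau)}=\overline{(1,1)}$ in $G/H$, so $L(\sigma,\tau)=L(1,1)=L$. For (a)(\(\Rightarrow\)), since $L$ is squarefree, the equality $L(\sigma,\tau)=L$ forces the multiset $\{(\sigma,\tau)p_i\}$ to coincide with the set $\{p_i\}$, producing a permutation $\pi$ of $\{1,\ldots,l\}$ with $(\sigma,\tau)p_i=p_{\pi(i)}$. The key driver is the global injectivity of $\alpha$ on $S_M$ (since $\LT(M)$ sits in NW-SE position in $A_{\gamma_1}$, whose rows form a single index set $[1,m_{\gamma_1}]$) and of $\beta$ on $S_N$. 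I would show, using this injectivity together with the bijectivity of $\pi$, that $\pi(i)=i$ for $i\notin S_M\cap S_N$ and that $\sigma(i)=\tau(i)=\pi(i)\in S_M\cap S_N$ with $r_{\pi(i)}=r_i$ for $i\in S_M\cap S_N$; the restriction $\pi|_{S_M\cap S_N}$ then lies in $\prod\Sym(I_j)$, so $(\sigma,\tau)\in H$.

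For (b)(\(\Rightarrow\)), given a strict violation $(p_i,p_j,p_k)$ with $\alpha_i<\alpha_j<\alpha_k$, $\beta_j<\beta_i<\beta_k$, and $r_i=r_j=r_k=r$, set $\sigma=(i\;k)\in\Sym(S_M)$ and $\tau=(j\;k)\in\Sym(S_N)$. Direct computation shows only $p_i,p_j,p_k$ are affected, being replaced by $p'_i=(\alpha_k,\beta_i,r)$, $p'_j=(\alpha_j,\beta_k,r)$, $p'_k=(\alpha_i,\beta_j,r)$. Because of the strict inequalities, none of $p_i,p_j,p_k$ coincide with any of $p'_i,p'_j,p'_k$, and the multiset differences are $L\setminus L(\sigma,\tau)=\{p_i,p_j,p_k\}$ and $L(\sigma,\tau)\setminus L=\{p'_i,p'_j,p'_k\}$. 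The maximum of $L\setminus L(\sigma,\tau)$ is $x_{p_i}$, while $x_{p'_k}$ lies in the same row as $x_{p_i}$ but at strictly smaller column ($\beta_j<\beta_i$), so $x_{p'_k}>x_{p_i}$. The lex comparison of multiset differences then gives $L(\sigma,\tau)>L$.

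For (b)(\(\Leftarrow\)), given $L(\sigma,\tau)>L$, I would first locate an index $k$ verifying the hypotheses of Lemma~\ref{largerimage}, namely $(\sigma,\tau)p_k>p_k$ and $(\sigma,\tau)p_k\geq p_{\kh}$. A short case analysis on the relative sizes of $\jh$ and $\kh$ yields such $k$: when $\jh\leq\kh$, any preimage of $q_{\jh}$ of index $\geq\jh$ works; when $\jh>\kh$, the appearance of $p_{\kh}$ among the sorted $q$'s forces some $k>\kh$ with $(\sigma,\tau)p_k=p_{\kh}$. Lemma~\ref{largerimage} then produces one of four cases, and case (I) directly gives a strict violation $(p_{\sigma(k)},p_{\tau(k)},p_k)$. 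The main obstacle is handling cases (II), (III), (IV), which \emph{a priori} yield only non-strict or degenerate triples; I would address these by choosing $(\sigma,\tau)$ to lex-maximize $L(\sigma,\tau)$ over $G$: in cases (II)--(IV), composing $(\sigma,\tau)$ with a well-chosen transposition involving the incidences should produce $L(\sigma',\tau')>L(\sigma,\tau)$, contradicting maximality. Hence only case (I) can occur at a maximizer, and a strict violation is obtained.
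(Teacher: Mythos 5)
Your part (a)($\Leftarrow$) and part (b)($\Rightarrow$) are fine; in fact your (b)($\Rightarrow$) is more detailed than the paper's one-line assertion (although your claim that $x_{p_i}$ is the largest variable of $L\setminus L(\sigma,\tau)$ need not hold under an arbitrary consistent lex order --- luckily $p'_k=(\alpha_i,\beta_j,r)$ is strictly NW of, and hence larger than, each of $p_i,p_j,p_k$, so the conclusion follows anyway). The remaining two directions have gaps.

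For (a)($\Rightarrow$), you correctly obtain a permutation $\pi$ with $(\sigma,\tau)p_i=p_{\pi(i)}$ and invoke injectivity of $\alpha$ on $S_M$ and of $\beta$ on $S_N$. But the statement ``$\pi(i)=i$ for $i\notin S_M\cap S_N$'' is precisely the hard part of the proof, and injectivity alone does not give it. For $i\in S_M\setminus S_N$ the image $(\sigma,\tau)p_i=(\alpha_{\sigma(i)},\beta_i,r_i)$ only pins down the column of $p_{\pi(i)}$ in $A_{\gamma_1}$; injectivity forces $\pi(i)=i$ only under the extra assumption $\pi(i)\in S_M$, and nothing you say rules out $\pi(i)\in S_N\setminus S_M$. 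Chasing that possibility is exactly what produces the zig-zag chain in the paper's argument (after an induction on $\min(\deg M,\deg N)$ to reduce to the fixed-point-free, $S_M\cap S_N=\emptyset$ case). Your outline stops one step short of this.

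For (b)($\Leftarrow$), you take a genuinely different route via Lemma~\ref{largerimage}, which the paper never actually invokes in the proof of this proposition --- it instead runs a bare-hands case analysis on $\jh$ and $\delta_1$. Your construction of an index satisfying the lemma's hypotheses is correct, and case (I) does produce a strict violation. However, your treatment of cases (II)--(IV) does not work as stated. At a lex-maximizer of $L(\sigma,\tau)$, consider case (IV): there $j:=\sigma(i)=\tau(i)$ satisfies $j\in S_M\cap S_N$ and $r_j=r_i$ (by Lemma~\ref{largerimage}(i),(ii)), so the transposition $(i\,j)$ lies in $\prod\Sym(I_j)$, and Lemma~\ref{sufficient} then gives $L(\sigma\cdot(i\,j),\tau\cdot(i\,j))=L(\sigma,\tau)$ --- equality, not strict inequality. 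So composing with a transposition ``involving the incidences'' cannot contradict maximality; it only moves you within the same $H$-coset, which by Lemma~\ref{sufficient} leaves $L(\sigma,\tau)$ unchanged. Cases (II) and (III) have the same obstruction (and there you may not even have $\sigma(i)$ or $\tau(i)$ an incidence, so the relevant transposition need not lie in $G$). You would need a different mechanism to dispose of (II)--(IV); a maximality-in-$L$ argument alone is insufficient, which is presumably why the paper proceeds by the more elementary $\jh$-analysis.
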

	\begin{proof} 
	
{\rm(a)}``$\Leftarrow$'' is a special case of Lemma \ref{sufficient}. For ``$\Rightarrow$'', 
we prove by induction on $m(M,N):=\min(\deg(M),\deg(N))$ for all $(\sigma,\tau)$. 
Fix $(\sigma,\tau)$ satisfying $L(\sigma,\tau)=L$, thus 
 $(\sigma,\tau)p_{\delta_i}=p_i$ ($1\le i\le l$) where $\delta_1,\dots,\delta_l$ is a rearrangement of $1,\dots,l$.		
In the base case $m(M,N)=0$, assume without loss of generality that $\deg(M)=0$, then $L(\sigma,\tau)=L(1,\tau)=L$ implies $\tau=1$, and $(\sigma,\tau)=(1,1)\in H$.  

For the inductive step,	we assume that $(\sigma,\tau)p_i\neq p_i$ ($1\le i\le l$), that is, $\delta_i\neq i$ for all $i$. Otherwise, assume $\delta_i=i$ for some $i$, then we can define $M'\in D_{u-1}(A)$ (resp.~$N'\in D_{v-1}(B)$)  to be obtained from $M$ (resp.~$N$) by removing the row and column of the corresponding submatrix that contains $x_{p_i}$. Applying the inductive hypothesis to $(M',N')$ with the same $(\sigma,\tau)$ , we get the conclusion that $(\sigma,\tau) = (\pi,\pi)$ where $\pi$ permutes the indices of points $p_j$ ($j\neq i$) while keeping their page numbers unchanged. Thus $(\sigma,\tau)\in H$. 

If $i\in S_M\cap S_N$, then $\delta_i\in S_M\cap S_N$, and both $\sigma$ and $\tau$ sends $\delta_i$ to $i$. In this case, define the transposition $\pi=(i \, \delta_i)$  and replace $(\sigma,\tau)$ by $(\sigma\pi,\tau\pi)$. Then $(\sigma,\tau)p_i=p_i$ is a fixed point of $(\sigma,\tau)$, a case that is already settled above. 

So we can assume $S_M\cap S_N=\emptyset$. It follows from the condition 
$(\sigma,\tau)p_{\delta_i}=p_i$ that:

-- if $i\in S_M$, then $\delta_i, \tau(\delta_i)\in S_N$, the three distinct points $p_i$, $p_{\delta_i}$, $p_{\tau(\delta_i)}$ are in the same page,  $p_i$ is in the same row with $p_{\delta_i}$, and in the same column with $p_{\tau(\delta_i)}$; 

-- if $i\in S_N$, then $\delta_i, \sigma(\delta_i)\in S_M$, the three distinct points $p_i$, $p_{\delta_i}$, $p_{\sigma(\delta_i)}$ are in the same page,  $p_i$ is in the same column with $p_{\delta_i}$, and in the same row with $p_{\sigma(\delta_i)}$. 

It follows that those $p_i$ in the same page must form a zig-zag chain as showing in Figure \ref{fig:zig-zag}. But then the endpoints of the zig-zag chain do not satisfy the condition, a contradiction. 
		\begin{figure}[h!]
			\centering
			\begin{tikzpicture}[baseline={([yshift=-2.5em] current bounding box.north)},scale=.6]
			\draw (-.75,0) -- (0,0) node[]{$\times$} -- (0,-1)node[]{$\Circle$} -- (1,-1)node[]{$\times$} -- (1,-2)node[]{$\Circle$} -- (1.75,-2) ;
			\draw (-1.2,0) node[]{$\hdots$};
			\draw (2.2,-2) node[]{$\hdots$};
			\end{tikzpicture}
			or
			\begin{tikzpicture}[baseline={([yshift=-2.5em] current bounding box.north)},scale=.6]
			\draw (-.75,0) -- (0,0) node[]{$\Circle$} -- (0,-1)node[]{$\times$} -- (1,-1)node[]{$\Circle$} -- (1,-2)node[]{$\times$} -- (1.75,-2) ;
			\draw (-1.2,0) node[]{$\hdots$};
			\draw (2.2,-2) node[]{$\hdots$};
			\end{tikzpicture}
			\caption{Zig-zag chain}
			\label{fig:zig-zag}
		\end{figure}
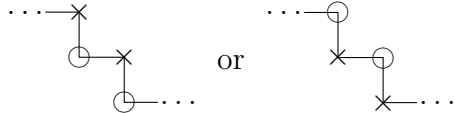

{\rm(b)}	``$\Rightarrow$'': Given a strict violation $(p_i,p_j,p_k)$, the transpositions $\sigma=(i\,k)$ and $\tau=(j\,k)$ satisfy  $L(\sigma,\tau) > L$. 
	
``$\Leftarrow$'':	
	 Suppose that $L(\sigma,\tau) > L$ and there is no strict violation of $(M,N)$. Similar to the proof of (a), we can assume that $(\sigma,\tau)p_i\neq p_i$ for all $1\le i\le l$ (which implies $\delta_i\neq i$ for $1\le i< \jh$), and $i\notin S_M\cap S_N$ for all $i<\jh$. 
By symmetry, we assume that $1\in S_M$. 

First, we consider the special case $\jh=1$. 
The condition $L(\sigma,\tau) > L$ implies that $(\sigma,\tau)p_{\delta_1}=q_1>p_1>p_{\delta_1}$. So $q_1,p_1,p_{\delta_1}$ are in the same page. 

If $\delta_1\in S_M\setminus S_N$, then $p_{\delta_1}$ is to the SE of $p_1$ thus $p_{\sigma(\delta_1)}$ and $q_1$ are on the right side of column containing $p_1$.  So $q_1$ is to the NE of $p_1$. It implies that $p_{\sigma(\delta_1)}$ is above the row containing $p_1$. Then $p_{\sigma(\delta_1)}>p_1$ since both $\sigma(\delta_1)$ and $p_1$ are in $S_M$. This contradicts the fact that $p_1$ is largest. 

If $\delta_1\in S_N\setminus S_M$, we consider two subcases.
If $p_{\tau(\delta_1)}$ is weakly to the NW of $p_{\delta_1}$, then it is weakly above $q_1$ while in the same column, so  $p_{\tau(\delta_1)}\ge q_1>p_1$, a contradiction. 
If $p_{\tau(\delta_1)}$ is to the SE of $p_{\delta_1}$, then $p_{\delta_1}$ is in the same row of $q_1$ and on its left, thus  $p_{\delta_1}\ge q_1>p_1$, a contradiction.

If $\delta_1\in S_M\cap S_N$, we consider two subcases. 
If $q_1$ is above the row containing $p_1$, then $\sigma(\delta_1)$ is in $S_M$ and $p_{\sigma(\delta_1)}$ is above the row containing $p_1$, thus $p_{\sigma(\delta_1)}>p_1$, a contradiction. 
If $q_1$ is to the left of the column containing $p_1$, then $\tau(\delta_1)$ is in $S_N$ and $p_{\tau(\delta_1)}$ is on the left side of the column containing $p_1$,  and $p_1$ and $p_{\tau(\delta_1)}$ are both to the NW of $p_{\delta_1}$. Thus $(p_1, p_{\tau(\delta_1)},p_{\delta_1})$ is a strict violation of $(M,N)$, contradicting the assumption. 

This completes the proof of the case $\jh=1$. Next, we consider the case $\jh>1$. 

First note that $p_1$ and $p_{\delta_1}$ are obviously in the same page. Moreover, 
$p_1$ lies to the NW of $p_{\delta_1}$,  
$\delta_1\in S_M\cap S_N$,
$\sigma(\delta_1)=1$.
 Indeed, to show $\delta_1\in S_M\cap S_N$:
if $\delta_1\in S_M\setminus S_N$, then  $p_1=q_1=(\sigma,\tau)p_{\delta_1}$ lies in the same column with $p_{\delta_1}$; but $1$ and $\delta_1$ are in $S_M$, so $p_1$ is to the NW of $p_{\delta_1}$,  a contradiction. 
If $\delta_1\in S_N\setminus S_M$, then $p_1$ is in the same row with and to the left of $p_{\delta_1}$, so $p_{\tau(\delta_1)}$ is in the same column with $p_1$ and above $p_1$, which implies  $p_{\tau(\delta_1)}>p_1$, a contradiction. 
Thus $\delta_1\in S_M\cap S_N$. It follows that $p_1$ lies to the NW of $p_{\delta_1}$. 
The identity $\sigma(\delta_1)=1$ follows from the fact that $1\in S_M$.

We show that $\tau(\delta_1)=2$ (thus $p_1$ and $p_2$ are in the same column and $p_2$ is below $p_1$). 
Assume the contrary that $\tau(\delta_1)\neq 2$. 
The condition $(\sigma,\tau)p_{\delta_1}=q_1=p_1$ implies that $p_{\sigma(\delta_1)}$ is at the same row as $p_1$, while 
$p_{\tau(\delta_1)}$ is at the same column as $p_1$. 
If $2\in S_N$, then $(p_1,p_{2},p_{\delta_1})$ is a strict violation, contradiction our assumption; 
if $2\in S_M$, then
$(p_{2},p_{\tau(\delta_1)},p_{\delta_1})$ is a strict violation, again a contradiction. 

If $\jh=2$, then $q_2>p_2$. So  $q_2$ is either above the row containing $p_2$, or to the left of the column containing $p_2$.
In the former case, $q_2$ shares the row with $p_{\sigma(\delta_2)}$, so $p_{\sigma(\delta_2)}=p_1$, $\sigma(\delta_2)=1$. However, $\sigma(\delta_1)=1$ and $\delta_2\neq \delta_1$, contradicting to the fact that $\sigma$ is bijective. 
In the latter case, $p_{\tau(\delta_2)}$ is to the left of the column containing $p_1\in S_M$ and $p_2\in S_N$, so $\tau(\delta_2)\neq1,2$ and  $p_{\tau(\delta_2)}>p_2$, which is impossible. 

If $\jh\ge 3$, then $q_2=p_2$, $\tau(\delta_2)=2$, but $\tau(\delta_1)=2$, contradicting the bijectivity of $\tau$. 
\end{proof}

	\subsection{$P(M,N)$}
		
	\begin{defn} \label{main}
	(i) For each $M,N\in D_u(A)\cup D_v(B)$, define an expression
\begin{equation}\label{eq:PMN}
 P(M,N) =\sum_{\substack{\overline{\sigma}\in \overline{\Sym(S_M)} \\ \overline{\sigma}\neq \overline{1}}} P_{\overline{\sigma},\cdot}-\sum_{\substack{\overline{\tau}\in \overline{\Sym(S_N)} \\ \overline{\tau}\neq \overline{1}}} P_{\cdot,\overline{\tau}}
 \end{equation}
		where each trivial pseudominor is replaced by 0.
		
	(ii) We say that $P(M,N)$ has \emph{sufficiently small leading terms} 
if \emph{all} the leading terms of $P_{\overline{\sigma},\cdot}$ (for $\overline{\sigma}\neq \overline{1}$) and of $P_{\cdot,\overline{\tau}}$ (for $\overline{\tau}\neq \overline{1}$) in the right side of 
\eqref{eq:PMN} are less than $L(=P_{1,1})$. Otherwise we say that $P(M,N)$ does not have sufficiently small leading terms. 
	\end{defn}   
	
\begin{prop} \label{P=S}
		For each $M,N\in D_u(A)\cup D_v(B)$, $S(M,N)=P(M,N)$. 
\end{prop}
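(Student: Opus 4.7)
The plan is to derive the identity directly from the symmetry \eqref{doublesum} combined with Proposition \ref{minor}.

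First, by Proposition \ref{minor} we have $P_{\cdot,1} = \frac{L}{\LT(M)}\,M$ and $P_{1,\cdot} = \frac{L}{\LT(N)}\,N$, so
$$S(M,N) = \frac{L}{\LT(M)}\,M - \frac{L}{\LT(N)}\,N = P_{\cdot,1} - P_{1,\cdot}.$$
It therefore suffices to prove
$$P_{\cdot,1} - P_{1,\cdot} \;=\; \sum_{\overline{\sigma}\neq \overline{1}} P_{\overline{\sigma},\cdot} \;-\; \sum_{\overline{\tau}\neq \overline{1}} P_{\cdot,\overline{\tau}}.$$

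Next, I would verify that $P_{\sigma,\cdot}$ depends only on the coset $\overline{\sigma}\in \overline{\Sym(S_M)}$, so that the definition $P_{\overline{\sigma},\cdot}:=P_{\sigma,\cdot}$ is unambiguous. Fix $\pi\in \prod\Sym(I_j)$; since $I_j\subseteq S_M\cap S_N$, we may regard $\pi$ as an element of either $\Sym(S_M)$ or $\Sym(S_N)$. Under the bijection $\tau\mapsto\tau\pi$ of $\Sym(S_N)$, using $\sgn(\sigma\pi)\sgn(\tau\pi)=\sgn(\sigma)\sgn(\tau)$ together with Lemma \ref{sufficient} (which gives $L(\sigma\pi,\tau\pi) = L(\sigma,\tau)$ since $(\pi,\pi)\in H$), one computes $P_{\sigma\pi,\cdot}=P_{\sigma,\cdot}$. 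The analogous statement for $P_{\cdot,\tau}$ is symmetric.

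Finally, since every coset in $\overline{\Sym(S_M)}$ has cardinality $|\prod\Sym(I_j)|$, and likewise for $\overline{\Sym(S_N)}$, I can rewrite both sides of \eqref{doublesum} by collecting terms according to cosets. Dividing through by the common factor $|\prod\Sym(I_j)|$ yields
$$\sum_{\overline{\sigma}\in \overline{\Sym(S_M)}} P_{\overline{\sigma},\cdot} \;=\; \sum_{\overline{\tau}\in \overline{\Sym(S_N)}} P_{\cdot,\overline{\tau}}.$$
Isolating the $\overline{1}$ terms from each side and rearranging gives exactly the required identity. The convention of replacing trivial pseudominors by $0$ in the definition of $P(M,N)$ is harmless: a pseudominor with a repeated row or column vanishes as a polynomial by the Leibniz formula, so those terms contribute $0$ in both the coset sum and the definition of $P(M,N)$.

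No substantive obstacle is anticipated; the argument is essentially bookkeeping about cosets and sign cancellations. The one step needing care is the coset-invariance of $P_{\sigma,\cdot}$ and $P_{\cdot,\tau}$, which is precisely where Lemma \ref{sufficient} enters.
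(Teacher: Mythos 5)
Your approach is essentially the same as the paper's: start from $S(M,N) = P_{\cdot,1} - P_{1,\cdot}$ (Proposition \ref{minor}), use the coset-invariance of $P_{\sigma,\cdot}$ and $P_{\cdot,\tau}$ (the ``$\Leftarrow$'' direction of Proposition \ref{equalP}, which you re-derive rather than cite), group the double sum \eqref{doublesum} by cosets of $\prod\Sym(I_j)$, divide by the common coset cardinality, and rearrange. One step needs more care than you give it: the division by $h := |\prod\Sym(I_j)|$ is not innocuous when $\operatorname{char}(K) \mid h$, since then $h \sum_{\overline\sigma} P_{\overline\sigma,\cdot} = h \sum_{\overline\tau} P_{\cdot,\overline\tau}$ reads $0 = 0$ in $K[X]$ and yields nothing. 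The paper closes this by observing that all the $L(\sigma,\tau)$ have coefficients $\pm 1$, so the identity $h\big(\sum_{\overline\sigma} P_{\overline\sigma,\cdot}\big) = h\big(\sum_{\overline\tau} P_{\cdot,\overline\tau}\big)$ already holds over $\mathbb{Z}$, where one may cancel $h$ in the integral domain $\mathbb{Z}[X]$ and then specialize to $K$. You should add this remark; as written your argument silently assumes $h$ is invertible in $K$. Your observation that trivial pseudominors are genuinely zero as polynomials (so the ``replace by $0$'' convention is a no-op) is correct and worth noting, though it is not where the subtlety lies.
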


	\begin{proof}
We assume $M\neq N$ since otherwise the statement is trivial.  
Let $h=|H|\;(=|\prod\Sym(I_j)|)$ and let $ \{1, \sigma_1,\hdots,\sigma_{h'}\}$ and $\{1,\tau_1,\hdots,\tau_{h''}\} $ be complete sets of coset representatives for $\Sym(S_M)/\prod\Sym(I_j) $ and $\Sym(S_N)/\prod\Sym(I_j)$, respectively.  Then, the cardinality of each coset in $\Sym(S_M)/\prod\Sym(I_j) $ and $\Sym(S_N)/\prod\Sym(I_j)$ is $h$, and by Proposition \ref{equalP}, we can write \eqref{doublesum} as 
		$hP_{1,\cdot}+hP_{\sigma_1,\cdot}+\hdots +hP_{\sigma_{h'},\cdot}=hP_{\cdot,1}+ hP_{\cdot,\tau_1}+\hdots+hP_{\cdot,\tau_{h''}}$. 
Dividing by $h$ (to avoid dividing by 0 when $K$ has positive characteristic, we can prove the equality over $\mathbb{Z}$ instead of $K$) and 
		using Proposition \ref{minor}, we get
		\[ \dfrac{L}{\LT(N)}N+\sum_{\substack{\overline{\sigma}\in \overline{\Sym(S_M)} \\ \overline{\sigma}\neq \overline{1}}} P_{\sigma,\cdot} = \dfrac{L}{\LT(M)}M+\sum_{\substack{\overline{\tau}\in \overline{\Sym(S_N)} \\ \overline{\tau}\neq \overline{1}}} P_{\cdot,\tau}  \]
		Rearranging the terms gives the desired equality. 
	\end{proof}

	\begin{lem}\label{noL}
		The polynomial $P(M,N)$ contains only monomials of the form $\pm L(\sigma,\tau)$, none of which are equal to $\pm L$.  
	\end{lem}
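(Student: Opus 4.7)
The plan is to establish each of the two assertions of the lemma by direct computation from Definition \ref{main} combined with the characterization of when $L(\sigma,\tau)=L$ given in Proposition \ref{noviolation}(a).

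The first assertion, that every monomial appearing in $P(M,N)$ has the form $\pm L(\sigma,\tau)$ for some $(\sigma,\tau)\in G$, is essentially formal. Unpacking Definition \ref{def}, each polynomial $P_{\sigma,\cdot}$ and $P_{\cdot,\tau}$ is by construction an explicit signed sum of terms $\sgn(\sigma)\sgn(\tau)L(\sigma,\tau)$, and replacing trivial pseudominors by $0$ merely deletes entire summands. Hence $P(M,N)$, obtained by taking a $\mathbb{Z}$-linear combination of such pieces via \eqref{eq:PMN}, expands as a $\mathbb{Z}$-linear combination of terms of the form $\pm L(\sigma,\tau)$; so the underlying monomial of every surviving term has the asserted shape.

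For the second assertion — that no surviving monomial is $\pm L$ — I would compute the coefficient of $L$ in each summand of \eqref{eq:PMN} and show it vanishes. Pick a summand $P_{\overline{\sigma},\cdot}$ from the first sum with $\overline{\sigma}\neq \overline{1}$ and choose a representative $\sigma$, so $\sigma\notin \prod\Sym(I_j)$. The coefficient of the monomial $L$ in $P_{\sigma,\cdot}=\sum_{\tau\in \Sym(S_N)}\sgn(\sigma)\sgn(\tau)L(\sigma,\tau)$ is the signed cardinality of $\{\tau\in \Sym(S_N):L(\sigma,\tau)=L\}$. By Proposition \ref{noviolation}(a), the condition $L(\sigma,\tau)=L$ forces $(\sigma,\tau)\in H=\{(\pi,\pi):\pi\in \prod\Sym(I_j)\}$, which in particular requires $\sigma\in \prod\Sym(I_j)$, contradicting $\overline{\sigma}\neq \overline{1}$. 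Thus the indexing set is empty and the coefficient of $L$ in $P_{\overline{\sigma},\cdot}$ is $0$. A symmetric argument applied to each summand $P_{\cdot,\overline{\tau}}$ with $\overline{\tau}\neq \overline{1}$ in the second sum yields the same conclusion. Adding the contributions, the total coefficient of $L$ in $P(M,N)$ is zero; since $-L$ has the same underlying monomial, this rules out $\pm L$ as well.

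There is no substantive obstacle: the lemma is engineered precisely so that excising the identity cosets $\overline{\sigma}=\overline{1}$ and $\overline{\tau}=\overline{1}$ from the sums in \eqref{eq:PMN} kills exactly the pairs $(\sigma,\tau)\in H$, and by Proposition \ref{noviolation}(a) these are the only pairs producing the monomial $L$.
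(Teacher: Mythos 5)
Your proof is correct and follows essentially the same route as the paper: both rely on Proposition \ref{noviolation}(a) to conclude that $L(\sigma,\tau)=L$ forces $(\sigma,\tau)\in H$, and then observe that excising the identity cosets $\overline{1}$ from the sums in \eqref{eq:PMN} eliminates exactly those pairs. Your extra framing via the coefficient of $L$, and your explicit verification of the first assertion, merely spell out what the paper leaves implicit.
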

	\begin{proof}
	If $L(\sigma,\tau)=L$, then $(\sigma,\tau)=(\pi,\pi)$ with $\pi\in \prod\Sym(I_j)$ by Proposition \ref{noviolation} (a). Then 
$\bar{\sigma}=\bar{1}\in\Sym(S_M)/\prod\Sym(I_j)$, 
$\bar{\tau}=\bar{1}\in\Sym(S_N)/\prod\Sym(I_j)$,
so $L(\sigma,\tau)$ does not appear in either sum of the right side of \eqref{eq:PMN}, therefore is not contained in $P(M,N)$.
	\end{proof}

	\begin{prop}\label{lessthanL}
		For any $M\in D_u(A),N\in D_v(B)$, if there is no strict violation of $(M,N)$, then $P(M,N)$ has sufficiently small leading terms.
	\end{prop}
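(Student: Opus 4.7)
The plan is to deduce this proposition essentially as a direct corollary of Proposition \ref{noviolation}. I would handle $P_{\bar\sigma,\cdot}$ for $\bar\sigma\ne\bar 1$; the treatment of $P_{\cdot,\bar\tau}$ for $\bar\tau\ne\bar 1$ is symmetric. The trivial pseudominor case is immediate: such $P_{\bar\sigma,\cdot}$ is replaced by $0$ in \eqref{eq:PMN}, hence contributes no leading monomial to worry about, so I only need to handle the nontrivial pseudominor case.

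The first step is to exploit the raw definition of $P_{\sigma,\cdot}$ rather than its pseudominor expression from Proposition \ref{minor}. Since
\[ P_{\sigma,\cdot} = \sum_{\tau\in \Sym(S_N)} \sgn(\sigma)\sgn(\tau)\, L(\sigma,\tau), \]
every monomial appearing in $P_{\sigma,\cdot}$ (even after collecting like terms) is of the form $L(\sigma,\tau)$ for some $\tau\in\Sym(S_N)$. By Proposition \ref{noviolation}(b), the no-strict-violation hypothesis on $(M,N)$ is equivalent to $L(\sigma,\tau)\le L$ for every $(\sigma,\tau)\in G$. Thus every monomial appearing in $P_{\sigma,\cdot}$ is at most $L$, and in particular $\LM(P_{\sigma,\cdot})\le L$.

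The second step is to rule out equality when $\bar\sigma\ne\bar 1$. Suppose for contradiction that $L(\sigma,\tau)=L$ for some $\tau\in\Sym(S_N)$. Then Proposition \ref{noviolation}(a) forces $(\sigma,\tau)\in H$, which in particular requires $\sigma\in\prod\Sym(I_j)$, and hence $\bar\sigma=\bar 1$ in $\overline{\Sym(S_M)}$, contradicting the assumption. Consequently, no $\tau$ can realize $L(\sigma,\tau)=L$, so $L$ does not occur as a monomial in the sum defining $P_{\sigma,\cdot}$. Combined with the bound from step one, this yields $\LM(P_{\bar\sigma,\cdot})<L$, which is precisely what is required.

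I do not expect a substantial obstacle. The only delicate point I would want to verify explicitly is that a monomial equal to $L$ cannot ``reappear'' after rewriting $P_{\sigma,\cdot}$ as a pseudominor times a monomial factor via Proposition \ref{minor}; but since the polynomial $P_{\sigma,\cdot}$ itself has the same expansion either way, and in its raw form every monomial is an $L(\sigma,\tau)$, this is automatic. The argument is therefore just a clean application of Proposition \ref{noviolation}(a) and (b) in succession, and the symmetric treatment of $P_{\cdot,\bar\tau}$ requires no new ideas.
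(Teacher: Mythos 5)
Your proof is correct and follows essentially the same route as the paper's: the paper appeals to Lemma \ref{noL} (which is itself proved using Proposition \ref{noviolation}(a)) together with Proposition \ref{noviolation}(b), whereas you re-derive the relevant content of Lemma \ref{noL} inline by applying \ref{noviolation}(a) directly to rule out $L(\sigma,\tau)=L$ when $\bar\sigma\neq\bar 1$. The only stylistic difference is that you argue term-by-term about each $P_{\bar\sigma,\cdot}$ and $P_{\cdot,\bar\tau}$, while the paper states the bound once for all monomials of $P(M,N)$; both come to the same thing, and your handling of the trivial-pseudominor case is a harmless bit of extra care.
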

	
\begin{proof}
Assume there is no strict violation of $(M,N)$.  
By Definition \ref{main} and Lemma \ref{noL}, $P(M,N)$ consists of a sum of monomials of the form $L(\sigma,\tau)\neq L$.  Since there exists no strict violation of $(M,N)$, each $L(\sigma,\tau)$ is less than $L$ by Proposition \ref{noviolation}.  In particular, the leading terms of $P_{\overline{\sigma},\cdot}$ (for $\overline{\sigma}\neq \overline{1}$) and of $P_{\cdot,\overline{\tau}}$ (for $\overline{\tau}\neq \overline{1}$) are all less than $L$, so  $P(M,N)$ has sufficiently small leading terms.
	\end{proof}

\subsection{Distance}\label{Distance}
Use the notation in Definition \ref{def}. 

\begin{defn}\label{distance}
	Let $\mathcal{D}=\Big( \bigcup_u D_u(A) \Big) \cup \Big( \bigcup_v D_v(B) \Big)$ and for any $P\in \mathcal{D}$ define $V_P=\{ x_{ij}^{(k)}:x_{ij}^{(k)}\textrm{ divides }\LM(P) \} $.  Define the \textit{distance} $d(M,N)$ for  $M,N\in \mathcal{D}$ to be
	$ d(M,N)=|(V_M\setminus V_N)\cup (V_N\setminus V_M)|=|V_M|+|V_N|-2|V_M \cap V_N|$. 
\end{defn}    

\begin{defn}\label{adjacent}
For $M\in D_{u}(A),N\in D_{v}(B)$, let $L=\LCM(\LM(M),\LM(N))$. 
	We say that $M$ and $N$ are \textit{$(A,B)$-adjacent} if $M\neq N$ and both of the following are true:
	
\noindent	(i) There does not exist a minor $P\in D_u^L(A):=\{ P\in D_u(A):\LM(P) \textrm{  divides } L\}$ such that $d(P,N)<d(M,N)$;
	
\noindent	(ii) There does not exist a minor $Q\in D_v^L(B):=\{ P\in D_v(B):\LM(P) \textrm{  divides }  L\}$ such that $d(M,Q)<d(M,N)$.
	
For $M,N\in D_u(A)$, we say that $M$ and $N$ are \textit{$A$-adjacent} if $M\neq N$ and there does not exist $P\in D_u(A)$ such that both $d(M,P)<d(M,N)$ and $d(P,N)<d(M,N)$. 
\end{defn}

\subsection{Single Determinantal Ideal}\label{single}
In this subsection we  study the single determinantal ideals using a method inspired by \cite{Narasimhan} and \cite{Caniglia} which will be generalized to the bipartite determinantal ideals.
Assume that $r=1$, $u=v\le\min(m,n)$, denote $x_{ij}^{(1)}$ as $x_{ij}$, $L= x_{\alpha_1\beta_1}\hdots x_{\alpha_l\beta_l}$, the points $p_i=(\alpha_i,\beta_i)$ satisfying $p_1>p_2>\cdots>p_l$.

\begin{defn} \label{defect}
We say that $(p_j,p_k,p_r,p_s,p_t)$ is a \textit{defect} of type I (resp.~type II) if there exist distinct indices 
$j,k,r,s,t$ such that ``$j,s\in S_M\setminus S_N$ and $k,r\in S_N\setminus S_M$'' (resp.~``$j,s\in S_N\setminus S_M$ and $k,r\in S_M\setminus S_N$''), 
$t\in S_M\cap S_N$,  $\alpha_j \le \alpha_k<\alpha_r \le \alpha_s<\alpha_t$, $\beta_k \le \beta_j<\beta_s \le \beta_r<\beta_t$. 
We say that the pair $(M,N)$ is \textit{defective} if a defect of either type exists. 
	\begin{center}
		\begin{tikzpicture}[baseline={([yshift=-.5em] current bounding box.north)},scale=0.5]
		\draw (-.5,0) -- (1-.25,0);
		\draw (1,0) node[]{$\Circle$};
		\draw (1,0) node[anchor=north west]{$p_j$};
		\draw (0,.5) -- (0,-1)node[]{$\times$};
		\draw (0,-1) node[anchor=north west]{$p_k$};
		\draw (3,.5)--(3,-2)node[]{$\times$};
		\draw (-.5,-3)--(2-.25,-3);
		\draw (2,-3) node[]{$\Circle$}; 
		\draw (-.5,-4)--(4-.25,-4);
		\draw (4,-4+.25)--(4,.5);
		\draw (4,-4)node[]{$\otimes$};
		\draw (2,-5.5) node[]{A defect of type I};
		\draw (2,-3) node[anchor=north west]{$p_s$};
		\draw (3,-2) node[anchor=north west]{$p_r$};
		\draw (4,-4) node[anchor=north west]{$p_t$};
		\end{tikzpicture}
		\hspace{1cm}
		\begin{tikzpicture}[baseline={([yshift=-.5em] current bounding box.north)},scale=0.5]
		\draw (-.5,-1) -- (0-.25,-1);
		\draw (0,-1) node[]{$\Circle$};
		\draw (1,.5) -- (1,0)node[]{$\times$};
		\draw (0,-1) node[anchor=north west]{$p_k$};
		\draw (1,0) node[anchor=north west]{$p_j$};
		\draw (2,.5)--(2,-3)node[]{$\times$};
		\draw (-.5,-2)--(3-.25,-2);
		\draw (3,-2) node[]{$\Circle$}; 
		\draw (-.5,-4)--(4-.25,-4);
		\draw (4,-4+.25)--(4,.5);
		\draw (4,-4)node[]{$\otimes$};
		\draw (2,-5.5) node[]{A defect of type II};
		\draw (2,-3) node[anchor=north west]{$p_s$};
		\draw (3,-2) node[anchor=north west]{$p_r$};
		\draw (4,-4) node[anchor=north west]{$p_t$};
		\end{tikzpicture}
	\end{center}

	A defect $(p_j,p_k,p_r,p_s,p_t)$ of type I (resp. of type II) is called \textit{maximal} if it satisfies the following two conditions:		
	(i)  If $j'\leq j$,  $k'\leq k$, $(p_{j'},p_{k'},p_t)$ is a violation of $(M,N)$ (resp.~of $(N,M)$), then $j'=j$ and $k'=k$; 
	(ii) If $j<s'\leq s$,  $k<r'\leq r$, and $(p_j,p_k,p_{s'},p_{r'},p_t)$ is a defect of type I (resp.~type II), then  $s'=s$ and $r'=r$.
\end{defn}

Intuitively, a maximal defect is such that both pairs $(p_j,p_ k)$ and $(p_r,p_s)$ are located as far NW as possible. Obviously, a maximal defect of type I (resp.~type II) exists if a defect of type I (resp.~type II) exists.

\begin{lem}\label{combination} For $M,N\in D_u(A)$, the pair $(M,N)$ is defective (of either type) if and only if there exist both a violation of $(M,N)$ and a violation of $(N,M)$. Consequently, if $(M,N)$ is not defective, then either $P(M,N)$ or $P(N,M)$ has sufficiently small leading terms.
\end{lem}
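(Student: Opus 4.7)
The plan is to prove the biconditional, from which the consequence follows immediately. The forward direction is a direct check: given a type~I defect $(p_j, p_k, p_r, p_s, p_t)$, the triple $(p_j, p_k, p_t)$ is a violation of $(M,N)$ (the set-membership conditions $j\in S_M\setminus S_N\subseteq S_M$, $k\in S_N\setminus S_M\subseteq S_N$, $t\in S_M\cap S_N$ come from the defect definition, and the inequalities $\alpha_j\le \alpha_k<\alpha_t$, $\beta_k\le \beta_j<\beta_t$ follow by collapsing the defect chain), while $(p_r, p_s, p_t)$ is a violation of $(N,M)$; a type~II defect is handled by the symmetric role swap. For the consequence: if $(M,N)$ is not defective, the biconditional yields the absence of a violation of one of $(M,N)$ or $(N,M)$, and since the absence of a violation implies the absence of a strict violation, Proposition~\ref{lessthanL} produces sufficiently small leading terms for the corresponding $P(M,N)$ or $P(N,M)$.

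The reverse direction is the substance of the lemma. Suppose violations $(p_a,p_b,p_c)$ of $(M,N)$ and $(p_{a'},p_{b'},p_{c'})$ of $(N,M)$ exist. The remark following Figure~\ref{fig:violation} forces $a,b'\in S_M\setminus S_N$ and $b,a'\in S_N\setminus S_M$. The two incidences $c,c'\in S_M\cap S_N\subseteq S_M$ are comparable on the NW-SE chain of $S_M$; taking $p_t$ to be the SE-most of the two keeps both triples valid violations with incidence $p_t$. Now $\{p_a,p_{b'}\}\subseteq S_M$ and $\{p_b,p_{a'}\}\subseteq S_N$ are each NW-SE-ordered, giving exactly four relative configurations.

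The key step is to rule out the two ``crossed'' configurations using the violation inequalities $\alpha_a\le\alpha_b$, $\beta_b\le\beta_a$, $\alpha_{a'}\le\alpha_{b'}$, $\beta_{b'}\le\beta_{a'}$. For instance, if $p_a$ is NW of $p_{b'}$ while $p_{a'}$ is NW of $p_b$, then splicing yields $\beta_a<\beta_{b'}\le\beta_{a'}<\beta_b\le\beta_a$, a contradiction; the mirror crossed configuration fails by an analogous cycle in $\alpha$. The two surviving ``nested'' configurations assemble directly into defects: when $p_a$ is NW of $p_{b'}$ and $p_b$ is NW of $p_{a'}$, the tuple $(j,k,r,s,t)=(a,b,a',b',t)$ is a type~I defect; when $p_{b'}$ is NW of $p_a$ and $p_{a'}$ is NW of $p_b$, the tuple $(a',b',a,b,t)$ is a type~II defect. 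In each case the required chain $\alpha_j\le \alpha_k<\alpha_r\le \alpha_s<\alpha_t$ (and its $\beta$-counterpart) is obtained by splicing a violation inequality, the strict NW-SE inequality from the nested configuration, and the other violation inequality. The main obstacle is precisely this position analysis, where the careful interplay of weak and strict inequalities with the NW-SE chain constraints on $S_M$ and $S_N$ (inherited from $\LT(M)$ and $\LT(N)$ being main diagonals of square submatrices under a diagonal order) is what drives out the crossed cases; once that is in place, matching each nested case with the defect definition is pure bookkeeping.
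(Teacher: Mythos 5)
The paper dispatches the first statement with a single word (``obvious'') and invokes Proposition~\ref{lessthanL} for the consequence, so your proposal is not a different route but a careful unpacking of what the authors treat as self-evident. Your forward direction is exactly the intended one: peeling off $(p_j,p_k,p_t)$ and $(p_r,p_s,p_t)$ from a defect gives violations of $(M,N)$ and $(N,M)$ respectively. Your reverse direction is sound and does genuine work: normalizing both violations to share the SE-most incidence is a good move (it is legitimate because the violation inequalities are one-sided, so weakening the incidence toward the SE preserves them), the crossed configurations are correctly eliminated by chaining the weak violation inequalities with the strict NW-SE inequalities into cycles, and the nested configurations do match the defect template with the advertised index assignments.

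One small gap: when you assert ``exactly four relative configurations,'' you are implicitly assuming $p_a\neq p_{b'}$ and $p_b\neq p_{a'}$, but both pairs live on the same NW-SE chain ($S_M$ and $S_N$ respectively), so equality is a priori possible. You should rule it out: if $a=b'$, combining $\alpha_{a}\le\alpha_b$, $\beta_b\le\beta_a$ from the first violation with $\alpha_{a'}\le\alpha_{a}$, $\beta_{a}\le\beta_{a'}$ from the second forces $p_{a'}$ and $p_b$ to be neither strictly NW nor strictly SE of one another, hence $a'=b$, and then all four inequalities collapse to give $p_a=p_b$, contradicting $a\in S_M\setminus S_N$, $b\in S_N\setminus S_M$; the case $b=a'$ is symmetric. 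With that two-line check inserted, your case analysis is exhaustive and the proof is complete.
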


\begin{proof}
The first statement is obvious, the second one uses Proposition \ref{lessthanL}. 
\end{proof}

\begin{prop}\label{notdefective}
	If $M,N\in D_u(A)$ are $A$-adjacent, then $(M,N)$ is not defective.
\end{prop}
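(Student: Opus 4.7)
I argue the contrapositive: assume $(M,N)$ is defective; I exhibit a minor $P\in D_u(A)$ with $d(M,P)<d(M,N)$ and $d(P,N)<d(M,N)$, which contradicts the $A$-adjacency of $M$ and $N$. By the symmetry that swaps $M$ and $N$ (which interchanges types~I and~II), I may assume a maximal type~I defect $(p_j,p_k,p_r,p_s,p_t)$ exists.

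The candidate is the single-swap set $V_P=(V_M\setminus\{p_s\})\cup\{p_r\}$. Since $p_s\in V_M\setminus V_N$ and $p_r\in V_N\setminus V_M$, one directly computes $|V_P\cap V_M|=u-1$ and $|V_P\cap V_N|=|V_M\cap V_N|+1$. The defect provides two distinct indices $j\ne s$ in $S_M\setminus S_N$, forcing $|V_M\cap V_N|\le u-2$; both intersections therefore strictly exceed $|V_M\cap V_N|$, so $d(M,P)$ and $d(P,N)$ are both strictly less than $d(M,N)=2(u-|V_M\cap V_N|)$.

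The substantive step is verifying that $V_P$ is in NW-SE position, and hence is the leading monomial set of a genuine $u$-minor. This reduces to showing $p_r$ fits compatibly between the NW- and SE-neighbors $p_{s^-},p_{s^+}$ of $p_s$ within the chain $V_M$. The bound $\beta_{s^-}<\beta_r$ is immediate from $\beta_{s^-}<\beta_s\le\beta_r$; the inequality $\alpha_{s^-}<\alpha_r$ is obtained by contradiction, splitting on whether $p_{s^-}$ is an incidence or not. In the former case, the assumption $\alpha_{s^-}\ge\alpha_r$ together with $\beta_{s^-}<\beta_r$ violates the NW-SE property of $V_N$ applied to the pair $(p_{s^-},p_r)$. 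In the latter case, $(p_j,p_k,p_r,p_{s^-},p_t)$ is a type~I defect with $s^-<s$, which violates maximality condition~(ii). A parallel argument handles $\beta_r<\beta_{s^+}$ when $p_{s^+}$ is an incidence.

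The main obstacle is the remaining SE-side case $p_{s^+}\in V_M\setminus V_N$ with $\beta_{s^+}\le\beta_r$: the analogous argument yields a type~I defect with $s^+>s$, which lies outside the range $s'\le s$ covered by condition~(ii) and does not contradict the stated maximality. The remedy is to refine the choice of defect prior to the construction: within the class of defects sharing $(p_j,p_k,p_r,p_t)$, iterate $s\mapsto s^+$ until no such SE-obstruction remains. The iteration terminates because the index set is finite, and one checks that the NW-side argument still applies to the refined defect (since any new NW-obstruction would produce a defect with an even smaller index, eventually returning to the range of condition~(ii) applied to the original maximal defect). With $V_P$ confirmed to be a NW-SE chain, it defines the sought $u$-minor $P$, completing the contradiction.
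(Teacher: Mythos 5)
Your overall strategy (contrapositive via an intermediate minor $P$) and the distance arithmetic are sound, but the core construction of $P$ is wrong, and the proposed ``remedy'' does not repair it.

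The single swap $V_P=(V_M\setminus\{p_s\})\cup\{p_r\}$ can indeed fail to be a NW-SE chain on the SE side, as you notice. The problem is that your iteration $s\mapsto s^+$ creates an unavoidable NW-side failure. Suppose you iterate from $s_0$ through $s_1,\dots,s_n=\tilde s$, where each intermediate tuple $(p_j,p_k,p_r,p_{s_i},p_t)$ is required to be a type~I defect. The defect inequalities force $\alpha_r\le\alpha_{s_i}$ for every $i$; in particular $\alpha_r\le\alpha_{s_{n-1}}=\alpha_{\tilde s^-}$. But for $V_P=(V_M\setminus\{p_{\tilde s}\})\cup\{p_r\}$ to be a NW-SE chain you need the \emph{strict} inequality $\alpha_{\tilde s^-}<\alpha_r$ (distinct points in a NW-SE chain cannot share a row). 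So whenever even one iteration step is taken, the NW condition between $p_{\tilde s^-}$ and $p_r$ automatically fails; this is structural, not a curable artifact of a poor choice of maximal defect, and your remark that ``the NW-side argument still applies to the refined defect'' does not hold up.

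The missing idea is that one must replace a \emph{block} of consecutive points rather than a single point, and the block must be anchored at the NW corner $m_j$ of the defect rather than at the SE point $m_s$. The paper's proof sets $w_1=\min\{i>j:m_i\text{ is an incidence}\}$, $w_2=\min\{i>k:n_i\text{ is an incidence}\}$, $y_1=\min\{s,w_1\}-j$, $y_2=\min\{r,w_2\}-k$, and (when $y_1\le y_2$) takes
$\LT(P)=x_{m_1}\cdots x_{m_{j-1}}\,x_{n_k}\cdots x_{n_{k+y_1-1}}\,x_{m_{j+y_1}}\cdots x_{m_u}$,
so that $m_j,\dots,m_{j+y_1-1}$ are replaced by $n_k,\dots,n_{k+y_1-1}$ (with the symmetric choice when $y_2<y_1$). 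With this block transplant the boundary checks reduce to just two comparisons ($m_{j-1}$ NW of $n_k$, and $n_{k+y_1-1}$ NW of $m_{j+y_1}$), and maximality conditions (i) and (ii) apply cleanly at exactly those two seams; the interior of the replaced block automatically stays NW-SE because it is a contiguous stretch of the $n$-chain. The distance drop is $d(M,P)=2y_1>0$, $d(P,N)=d(M,N)-2y_1>0$, which is the analogue of your computation but for block size $y_1$ rather than $1$.
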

\begin{proof}
	We will prove the contrapositive.  Suppose that $(M,N)$ is defective.  Without loss of generality we assume that there exists a defect of type I.  Inspired by the proof in  \cite{Caniglia}, we will construct a new minor between $M$ and $N$ by replacing a section of the ordered sequence of points defining the leading term of one of the minors (the recipient) with a section of equal size from the leading term of the other (the donor).  
		
Arrange the points in $\{(\alpha_i,\beta_i)\, :\, i\in S_M\}$ in decreasing order and denote them as 
 $m_1>m_2>\hdots >m_u$. Similarly, denote points in $\{(\alpha_i,\beta_i)\, : \,  i\in S_N\}$  as $n_1>n_2>\hdots>n_u$. Denote $m_i=(a_i,b_i),n_i=(a'_i,b'_i)$ for $i=1\hdots u$.  
 Let $(m_j,n_k,n_r,m_s,m_t=n_{t'}) $ be a maximal defect of type I (note that the indices are chosen in a different way than those in Definition \ref{defect}).  Define 
$ w_1=\min \{i:i>j,\ m_i \text{ is an incidence}\}$,
$w_2=\min \{i:i>k,\ n_i \text{ is an incidence}\}$. 
 Note that $w_1\leq t$ and $w_2\leq t'$.  Define $y_1=\min \{s,w_1\} - j$ and $y_2=\min \{ r,w_2\} -k$. Note that $y_1,y_2\in\mathbb{Z}_{>0}$. 
 Define the minor $P$ such that 
\begin{equation}\label{eq:minorP}
\LT(P)=
\begin{cases}
&	x_{m_1}\cdots x_{m_{j-1}}x_{n_{k}}\cdots x_{n_{k+y_1-1}} x_{m_{j+y_1}}\cdots x_{m_u},  \text{ if $y_1\leq y_2$;} \\
&	x_{n_1}\cdots x_{n_{k-1}}x_{m_{j}}\cdots x_{m_{j+y_2-1}}x_{n_{k+y_2}}\cdots x_{n_u}, \text{ if $y_2< y_1$.}
\end{cases}
\end{equation}
Note that when $y_1\leq y_2$, $LT(P)$ is obtained from $LT(M)$ by replacing $y_1$ variables in $LT(M)$ with $y_1$ variables in $LT(N)$, so we say that $M$ is the recipient and $N$ is the donor; when $y_2<y_1$, $LT(P)$ is obtained from $LT(N)$ by replacing $y_2$ variables in $LT(N)$ with $y_2$ variables in $LT(M)$, so we say that $N$ is the recipient and $M$ is the donor.
	Figure \ref{fig:transplant} visually demonstrates two nearly identical examples of how $\LT(P)$ is selected.  
	\begin{figure}[h]
		\centering
		\begin{tikzpicture}[scale=0.35]
		\draw (0,0) node[]{$\times$};
		\draw (1,-1) circle (3.3mm);
		\draw (3,-2) circle (3.3mm);
		\draw (4,-3) circle (3.3mm);
		\draw (2,-4) node[]{$\times$};
		\draw (6,-6) node[]{$\times$};
		\draw (7,-7) node[]{$\times$};
		\draw (8,-9) circle (3.3mm);
		\draw (9,-8) node[]{$\times$};
		\draw (10,-10) circle (3.3mm) node[] {$\times$};
		\draw (11,-11) circle (3.3mm);
		\draw (2,0) -- (2,-4);
		\draw (-1,-2) --(2.68,-2);
		\fill[gray,fill opacity=0.3] (2,-1) rectangle (3,-2);
		\fill[gray,fill opacity=0.3] (0,-2) rectangle (2,-4);
		\draw (3,-8) -- (9,-8);
		\draw (8,-6) --(8,-8.68);
		\fill[gray,fill opacity=0.3] (4,-8) rectangle (8,-9);
		\fill[gray,fill opacity=0.3] (8,-7) rectangle (9,-8);
		\draw (3,-2) node[anchor=south west]{$m_j$};
		\draw (2,-4) node[anchor=north east]{$n_k$};
		\draw (9,-8) node[anchor=south west]{$n_r$};
		\draw (8,-9) node[anchor=north east]{$m_s$};
		\draw (10,-10) node[anchor=south west]{$m_t=n_{t'}$};
		\draw (6,-12) node []{$y_1 = s-j=2 $ };
		\draw (6,-13) node []{$y_2 =r-k=3$ };
		\draw (6,-14) node []{$N$ is donor };	
		\draw (1-.35,-1-.35) rectangle (1+.35,-1+.35);
		\draw (2-.35,-4-.35) rectangle (2+.35,-4+.35);
		\draw (6-.35,-6-.35) rectangle (6+.35,-6+.35);
		\draw (8-.35,-9-.35) rectangle (8+.35,-9+.35);
		\draw (10-.35,-10-.35) rectangle (10+.35,-10+.35);
		\draw (11-.35,-11-.35) rectangle (11+.35,-11+.35);
		\end{tikzpicture}
		\begin{tikzpicture}[scale=0.35]
		\draw (0,0) node[]{$\times$};
		\draw (1,-1) circle (3.3mm);
		\draw (3,-2) circle (3.3mm);
		\draw (4,-3) circle (3.3mm);
		\draw (2,-4) node[]{$\times$};
		\draw (5,-5) circle (3.3mm) node[] {$\times$};
		\draw (6,-6) node[]{$\times$};
		\draw (7,-7) node[]{$\times$};
		\draw (8,-9) circle (3.3mm);
		\draw (9,-8) node[]{$\times$};
		\draw (10,-10) circle (3.3mm) node[] {$\times$};
		\draw (11,-11) circle (3.3mm);
		\draw (2,0) -- (2,-4);
		\draw (-1,-2) --(2.68,-2);
		\fill[gray,fill opacity=0.3] (2,-1) rectangle (3,-2);
		\fill[gray,fill opacity=0.3] (0,-2) rectangle (2,-4);
		\draw (4,-8) -- (9,-8);
		\draw (8,-6) --(8,-8.68);
		\fill[gray,fill opacity=0.3] (5,-8) rectangle (8,-9);
		\fill[gray,fill opacity=0.3] (8,-7) rectangle (9,-8);
		\draw (3,-2) node[anchor=south west]{$m_j$};
		\draw (2,-4) node[anchor=north east]{$n_k$};
		\draw (9,-8) node[anchor=south west]{$n_r$};
		\draw (8,-9) node[anchor=north east]{$m_s$};
		\draw (5,-5) node[anchor=south west]{$m_{w_1}=n_{w_2}$};
		\draw (10,-10) node[anchor=south west]{$m_t=n_{t'}$};
		\draw (6,-12) node []{$y_1 = w_1-j=2 $ };
		\draw (6,-13) node []{$y_2 =w_2-k=1$ };
		\draw (6,-14) node []{$M$ is donor };
		\draw (-.35,-.35) rectangle (.35,.35);
		\draw (3-.35,-2-.35) rectangle (3+.35,-2+.35);
		\draw (5-.35,-5-.35) rectangle (5+.35,-5+.35);
		\draw (6-.35,-6-.35) rectangle (6+.35,-6+.35);
		\draw (7-.35,-7-.35) rectangle (7+.35,-7+.35);
		\draw (9-.35,-8-.35) rectangle (9+.35,-8+.35);
		\draw (10-.35,-10-.35) rectangle (10+.35,-10+.35);
		\end{tikzpicture}
		\caption{Two examples of transplant procedure, where $\Circle=m_i$, $\times=n_i$, $\square = $ point chosen for $\LT(P)$. The diagram on the right contains one additional incidence in the middle of the picture, causing a different choice for $\LT(P)$.  The shaded regions do not contain any of those points by maximality.}
		\label{fig:transplant}
	\end{figure}
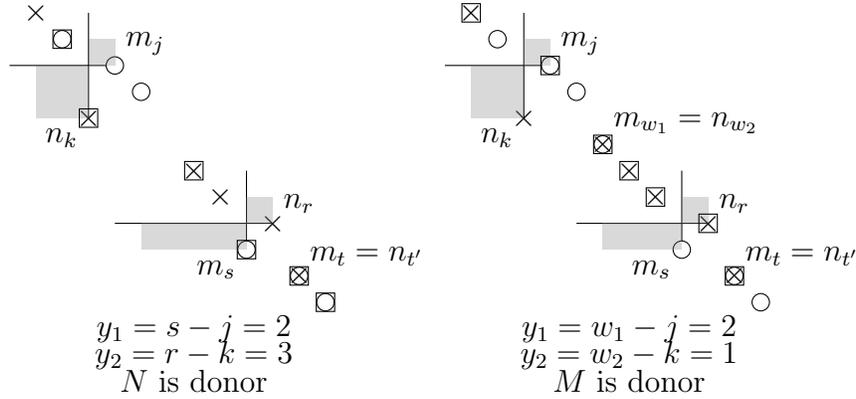
	
It is clear that $\LT(P)|L$ and $\deg(P)=u$.  By Definition \ref{defect}, none of $m_j,n_k,n_r$ or $m_s$ are incidences.  
By \eqref{eq:minorP}, 
 $y_1\leq s-j$ implies that $m_s$ is a point of $\LT(P)$ if $y_1\leq y_2$,   Similarly, $y_2\leq r -k$ implies that $n_r$ is a point of $\LT(P)$ if $y_2<y_1$.  Therefore, we will either choose both $n_k$ and $m_s$ if $M$ is the recipient, or both $m_j$ and $n_r$ if $N$ is the recipient.  
So $P\neq M,N$.
	
	In the rest of the proof we assume, without loss of generality, that $y_1\leq y_2$ (so $M$ is recipient and $N$ is donor).  To verify the variables in $\LT(P)$ are in NW-SE position, we need only check that $m_{j-1}$ and $n_k$ are in NW-SE position, $n_{k+y_1-1}$ and $m_{j+y_1}$ are also in NW-SE position.  That is, we must verify the strict inequalities. 
	\begin{align}
	&a_{j-1}<a'_k, \quad b_{j-1}<b'_k \label{ineq12}\\
	&a'_{k+y_1-1}<a_{j+y_1}, \quad b'_{k+y_1-1}< b_{j+y_1} \label{ineq34}
	\end{align}
	
	To verify \eqref{ineq12}, note that $m_{j-1}$ is NW of $m_j$, so $a_{j-1}<a_j$ and $b_{j-1}<b_j$.  By definition of defect, $a_j\leq a'_k$ and $b'_k\leq b_j$.  So $a_{j-1}<a'_k$.  If $b_{j-1}\ge b'_k$, then $(m_{j-1},n_k,m_t)$ is a violation of $(M,N)$, which violates Definition \ref{defect} condition (i) of maximality, so $b_{j-1}<b'_k$.
	
To verify \eqref{ineq34}, note that $s\neq w_1$ because $m_{w_1}$ is an incidence but $m_s$ is not. So we break into two cases.  
Case 1:  $s<w_1$. Then $s-j=y_1\leq y_2\le r-k$,  \eqref{ineq34} becomes  $a'_{k+s-j-1}<a_{s}$, $b'_{k+s-j-1}< b_{s}$.  Now, since $k+s-j-1\leq k+(r-k)-1 = r-1$, $n_{k+s-j-1}$ must be NW of $n_r$, so $a'_{k+s-j-1}<a'_r$ and $b'_{k+s-j-1}<b'_r$.  Also, by the definition of type I defect, we have $a'_r\leq a_s$ and $b_s\leq b'_r$, so $a'_{k+s-j-1}<a'_r\le a_s$ which proves the first equation of  \eqref{ineq34}. 
To prove the second equation of \eqref{ineq34}, we assume the contrary that $b_s\leq b'_{k+s-j-1}$. 
Note that $k<k+s-j-1$ (if $k=k+s-j-1$, then $n_k$ lies weakly SW of $m_j$ and weakly NE of $m_s$, forcing $m_j=m_s$, a contradiction).
Then $(m_j,n_k,n_{k+s-j-1},m_s,m_t)$ is a defect of type I, which together with inequalities $k<k+s-j-1<r$ contradict the fact the assumption that $(m_j,n_k,n_r,m_s,m_t)$ is a maximal defect of type I.
%
%
%
Case 2:  $s>w_1$.  So, $j<w_1<s$, $k<w_2<r$, $m_{w_1}=n_{w_2}$ is an incidence, $y_1=w_1-j$, and $y_2=w_2-k$.  Then \eqref{ineq34} amounts to showing that $n_{k+w_1-j-1}$ is NW of $m_{w_1}=n_{w_2}$, which follows from $k+w_1-j-1<k+y_1\le k+y_2=w_2$.    
	
Finally, we shall prove $d(M,P)+d(P,N)=d(M,N)$ and $d(M,P)>0,d(P,N)>0$, which implies that $M,N$ are not $A$-adjacent. Indeed, let $Z=V_M\cap V_N$ be the set of all incidences and let $z=|Z|$.  
Note that $x_{n_k},x_{n_{k+1}},\dots,x_{n_{k+y_1-1}}\in V_N\setminus V_M$. 
	Then	
$d(M,P) =  2y_1>0$, 
$d(P,N) = 2u-2(z+y_1)$, 
so $d(M,N)=d(M,P)+d(P,N)=2u-2z=d(M,N)$. Moreover, 
$k+y_1-1\le k+y_2-1\le k+(r-k)-1<r 
\Rightarrow n_r\in V_N\setminus V_P
\Rightarrow P\neq N
\Rightarrow d(P,N)>0$.
\end{proof}

\begin{prop} \label{thereexistsachain}
	For any $M,N\in D_u(A)$ and $L=\LCM(\LM(M),\LM(N))$, there exists a chain 
	$( M=M_0, M_1,\hdots,M_{k-1},M_k=N )$ 
	of minors in $D_u^L(A)$
	such that for each $j=1,\hdots,k$, the S-pair $S(M_{j-1},M_j)$ may be expressed as a finite sum $\sum a_iP_i$ where $a_i$ are monomials, $P_i\in D_u(A)$ and 
 $\LM(a_iP_i)<\LCM(\LM(M_{j-1}),\LM(M_j))$ for all $i$.
\end{prop}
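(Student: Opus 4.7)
The plan is to proceed by strong induction on the distance $d(M,N)$. The base case $d(M,N)=0$ forces $V_M=V_N$, and since the leading monomial of a minor in $D_u(A)$ (under a diagonal lexicographic order) uniquely determines the row- and column-indices of its submatrix, $M=N$ and the constant chain $(M)$ works with no S-pairs to verify.

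For the inductive step, suppose $d(M,N)>0$ and the result holds for all pairs of smaller distance. First consider the case in which $(M,N)$ is not defective. By Lemma \ref{combination}, at least one of $P(M,N)$ or $P(N,M)$ has sufficiently small leading terms; combining Proposition \ref{P=S} with the S-pair antisymmetry $S(M,N)=-S(N,M)$, in either case $S(M,N)$ admits, up to an overall sign, the expression $\sum_{\bar\sigma\neq\bar1}P_{\bar\sigma,\cdot}-\sum_{\bar\tau\neq\bar1}P_{\cdot,\bar\tau}$ in which every summand has leading monomial strictly less than $L$. Proposition \ref{minor} writes each $P_{\bar\sigma,\cdot}$ and $P_{\cdot,\bar\tau}$ as a monomial times a pseudominor in $A$; trivial pseudominors vanish, while non-trivial ones equal $\pm$ a minor in $D_u(A)$. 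Absorbing signs into the monomial coefficients thus realizes $S(M,N)$ as a sum $\sum a_iP_i$ of the required form, so the chain $(M,N)$ of length $1$ suffices, noting $M,N\in D_u^L(A)$ since $\LM(M),\LM(N)\mid L$.

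Now suppose $(M,N)$ is defective. I would invoke the transplant construction carried out in the proof of Proposition \ref{notdefective}: that argument explicitly produces a minor $P\in D_u(A)$ satisfying $\LM(P)\mid L$, $d(M,P)>0$, $d(P,N)>0$, and $d(M,P)+d(P,N)=d(M,N)$, so both $d(M,P)$ and $d(P,N)$ are strictly less than $d(M,N)$. Applying the inductive hypothesis to the pairs $(M,P)$ and $(P,N)$ yields chains lying in $D_u^{L_{MP}}(A)$ and $D_u^{L_{PN}}(A)$ respectively, where $L_{MP}=\LCM(\LM(M),\LM(P))$ and $L_{PN}=\LCM(\LM(P),\LM(N))$. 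Since $\LM(M),\LM(P),\LM(N)$ all divide $L$, both $L_{MP}$ and $L_{PN}$ divide $L$, so both chains already lie in $D_u^L(A)$. The S-pair bound in the proposition is stated relative to the \emph{local} LCM $\LCM(\LM(M_{j-1}),\LM(M_j))$ rather than the global $L$, so concatenating the two chains preserves the bound step by step and produces the desired chain from $M$ to $N$. The only genuinely nontrivial ingredient is the transplant construction itself, which is already handled in the proof of Proposition \ref{notdefective}; everything else is bookkeeping made possible by the locality of the S-pair condition.
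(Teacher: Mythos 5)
Your proof is correct and takes essentially the same route as the paper: both rely on the transplant construction to interpolate a minor $P\in D_u^L(A)$ between any defective pair, reducing the distance, and both close the non-defective case via Lemma \ref{combination} together with $S=P(M,N)$ and Proposition \ref{minor}. The only cosmetic difference is organizational -- you run a strong induction on $d(M,N)$ and dichotomize directly on defective/non-defective, whereas the paper iteratively inserts intermediaries until every consecutive pair is $A$-adjacent and then invokes Proposition \ref{notdefective} to pass to non-defectiveness; these are logically equivalent, and your observation that local LCMs divide the global $L$ (so the concatenated chain stays in $D_u^L(A)$) is exactly the bookkeeping the paper uses as well.
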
 

\begin{proof}  The proposition is trivial if $M=N$, so we assume $M\neq N$. Start with the chain $(M_0=M, M_1=N)$. If there are $M_{i-1},M_i$ which are distinct and not $A$-adjacent, then there exists $P\in D_u^L(A)$ such that  both $d(M_{i-1},P)$ and $d(P,M_i)$ are less than $d(M_{i-1},M_i)$, then we insert $P$ between $M_{i-1}$ and $M_i$, that is, re-index $M_j$ as $M_{j+1}$ for all $j\ge i$, and define $M_i=P$. 
 After finitely many insertions we obtain a chain $( M=M_0, M_1,\hdots,M_{k-1},M_k=N )$ such that $M_{j-1}$ and $M_j$ are $A$-adjacent for every $1\le j\le k$.  For every such $j$, $M_{j-1}$ and $M_j$ are not defective by Proposition \ref{notdefective},  thus by Lemma \ref{combination}, at least one of $P(M_{j-1},M_j)$ or $P(M_j,M_{j-1})$ may be expressed as a finite sum $\sum a_iP_i$ where $a_i$ are monomials, $P_i\in D_u(A)$ and $\LM(a_iP_i)<\LCM(\LM(M_{j-1}),\LM(M_j))$ for all $i$.  The claim follows, since $P(M_{j-1},M_j)=-P(M_j,M_{j-1})=S(M_{j-1},M_j)$.
\end{proof}

\subsection{Bipartite Determinantal Ideal}
The proof of the previous section can be easily adapted to the case of bipartite determinantal ideals.  	We use the notation in Definition \ref{def}. 

\begin{prop} \label{noviolation'}

	If $M,N$ are $(A,B)$-adjacent, then there is no violation of $(M,N)$.  
\end{prop}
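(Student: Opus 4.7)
The plan is to prove the contrapositive: given a violation $(p_i,p_j,p_k)$ of $(M,N)$, we exhibit either a $P\in D_u^L(A)$ with $d(P,N)<d(M,N)$ or a $Q\in D_v^L(B)$ with $d(M,Q)<d(M,N)$, so by Definition~\ref{adjacent} the pair $(M,N)$ cannot be $(A,B)$-adjacent. The construction is a simplified version of the transplant in the proof of Proposition~\ref{notdefective}, now based on three points instead of five.

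By definition the three points of the violation lie on the common page $k_0=r_i=r_j=r_k$. Let $m^{(0)}_1>\cdots>m^{(0)}_{u_0}$ be the points of $\LT(M)$ on page $k_0$ in decreasing order and $n^{(0)}_1>\cdots>n^{(0)}_{v_0}$ the analogous list for $\LT(N)$, and write $p_i=m^{(0)}_a$, $p_j=n^{(0)}_b$, $p_k=m^{(0)}_t=n^{(0)}_{t'}$ with $a<t$ and $b<t'$. Pass to a maximal violation by choosing $a$ minimal and then $b$ minimal among all violations of $(M,N)$. Let $w_1$ (resp.\ $w_2$) be the first incidence after $m^{(0)}_a$ in the $m^{(0)}$-chain (resp.\ after $n^{(0)}_b$ in the $n^{(0)}$-chain); then $w_1\le t$ and $w_2\le t'$, and the integers $y_1=w_1-a$ and $y_2=w_2-b$ are both at least $1$. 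If $y_1\le y_2$, construct $P$ whose leading term is obtained from $\LT(M)$ by replacing $m^{(0)}_a,\dots,m^{(0)}_{a+y_1-1}$ with $n^{(0)}_b,\dots,n^{(0)}_{b+y_1-1}$; if $y_2<y_1$, construct $Q$ symmetrically from $\LT(N)$. Divisibility $\LM(P)\mid L$ and squarefreeness are immediate because the inserted $n^{(0)}$-variables precede the first incidence $n^{(0)}_{w_2}$ and are therefore not in $V_M$. The distance bound $d(P,N)=d(M,N)-2y_1<d(M,N)$ is also immediate: we trade $y_1$ elements of $V_M\setminus V_N$ for $y_1$ elements of $V_N\setminus V_M$.

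The main obstacle is verifying that $\LM(P)$ consists of points in NW-SE position in $A$; this reduces to two interior boundary checks within page $k_0$, plus an automatic check at the page-$k_0$ boundary in $A$. The right boundary is that $n^{(0)}_{b+y_1-1}$ is NW of $m^{(0)}_{a+y_1}=m^{(0)}_{w_1}$; since $m^{(0)}_{w_1}$ is an incidence and so equals some $n^{(0)}_{w_2'}$, a direct comparison of rows shows $w_2'\ge w_2>b+y_1-1$, whence $n^{(0)}_{b+y_1-1}$ precedes $n^{(0)}_{w_2'}$ in the $n^{(0)}$-chain. The left boundary, that $m^{(0)}_{a-1}$ is NW of $n^{(0)}_b$ when $a>1$, is the delicate step: the row inequality is immediate from $\alpha_{m^{(0)}_{a-1}}<\alpha_i\le\alpha_j$, but the column inequality $\beta_{m^{(0)}_{a-1}}<\beta_j$ does not follow from $\beta_j\le\beta_i$, and so requires the maximality of $a$ --- if it failed, then $(m^{(0)}_{a-1},n^{(0)}_b,p_k)$ would itself be a violation of $(M,N)$ with strictly smaller first index. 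Finally, compatibility with the rest of the chain outside page $k_0$ is automatic: the last page-$k_0$ point $m^{(0)}_{u_0}$ is unchanged (since $a+y_1\le t\le u_0$), and the first page-$k_0$ point is either $m^{(0)}_1$ (if $a>1$) or $n^{(0)}_b$ (if $a=1$), and in the latter case $\alpha_{n^{(0)}_b}=\alpha_j\ge\alpha_i=\alpha_{m^{(0)}_1}$ still exceeds the row of any point on an earlier page.
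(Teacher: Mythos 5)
Your proof is correct and takes essentially the same approach as the paper: proving the contrapositive via a transplant of a contiguous run of leading-term points of $M$ by the corresponding run from $N$ (or vice versa, whichever is shorter), with a maximal violation supplying the left NW-SE boundary check, the ``first incidence'' indices $w_1,w_2$ supplying the right one, and a direct count giving $d(P,N)=d(M,N)-2y_1$. The only cosmetic difference is that you re-index within the page $k_0$ and do not normalize the third point of the violation to be the common first incidence $m_{w_1}=n_{w_2}$ as the paper does, but your weaker observation that $m^{(0)}_{w_1}=n^{(0)}_{w_2'}$ with $w_2'\ge w_2$ suffices in the same place.
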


\begin{proof}
We prove the contrapositive.  Suppose there exists a violation of $(M,N)$ (so $M\neq N$).  
%
	Let $\LM(M)$ and $\LM(N)$ be given by the points $m_1>m_2>\hdots>m_u$ and $n_1>n_2>\hdots>n_v$, respectively, where $m_i=(a_i,b_i,s_i)$ and $n_i=(a'_i,b'_i,t_i)$.   Let $(m_j,n_k,m_{w_1}=n_{w_2})$ be a violation that is maximal in the following sense:

\noindent (a)  If $j' \leq j$ and $k'\leq k$, then $(m_{j'},n_{k'},m_{w_1})$ is a violation only when $j' = j$ and $k' = k$, and 

\noindent (b) $w_1=\min \{i:i>j, \  m_i \text{ is an incidence} \}$ and $w_2=\min \{ i:i>k,\ n_i \text{ is an incidence} \} $. 

Without loss of generality, assume $w_1-j\leq w_2-k$. Define $P\in D_u^L(A)$ such that
$$\LT(P)=x_{m_1}\cdots x_{m_{j-1}} x_{n_{k}}\cdots x_{n_{k+w_1-j-1}} x_{m_{w_1}}\cdots x_{m_u}.$$ 
We claim that $P$ is well-defined and $d(P,N)<d(M,N)$.  
Obviously $P\neq M$.  

We verify that the variables in $\LT(P)$ are in NW-SE position in $A$.  
The fact that $(m_j,n_k,m_{w_1}=n_{w_2})$ is a violation and $k+w_1-j-1<w_2$ implies that $n_k,\hdots,n_{k+w_1-j-1}$ are in the same page.  So it suffices to show 
(i)  $m_{j-1}$ is NW of $n_k$ in $A$;
(ii) $n_{k+w_1-j-1}$ is NW of $m_{w_1}(=n_{w_2})$ in $A$. 
Indeed, (i) must hold because otherwise $(m_{j-1},n_k,m_t)$ is a violation of $(M,N)$, which contradicts (a).
For (ii), note that $k\le k+w_1-j-1<w_2$, so $n_{k+w_1-j-1}$ and $n_{w_2}$  lie on the same page and are in NW-SE position.  
	
Finally we verify that $d(P,N)<d(M,N)$.
Let $Z$ be the set of all incidences and let $|Z|=z$.  Then
	$d(M,N) = u+v-2z$, $d(P,N)  = u+v-2(z+w_1-j)=d(M,N)-2(w_1-j)<d(M,N)$, so $M,N$ are not $(A,B)$-adjacent.
\end{proof}    

\begin{prop} \label{chain''}
{\rm(a)}  There exist minors $M'\in D_u^L(A), N'\in D_v^L(B)$ such that the S-pair $S(M',N')$ may be expressed as a (possibly empty) finite sum $\sum a_iP_i$ such that for every $i$, $a_i$ is a monomial, $P_i\in D_u(A)\cup D_v(B)$ and $\LM(a_iP_i)<\LCM(\LM(M'),\LM(N'))$.

{\rm(b)}	There exists a chain of minors in $D^L_u(A)\cup D^L_v(B)$
	\[ M=M_0, M_1,\hdots ,M_{k-1},M_k=N  \] 
	such that for each $j=1,\hdots,k$, the S-pair $S(M_{j-1},M_j)$ may be expressed as a finite $\sum a_iP_i$ where $a_i$ are monomials, $P_i\in D_u(A)\cup D_v(B)$ and 
$\LT(a_iP_i)<\LCM(\LT(M_{j-1}),\LT(M_j))$ for all $i$.
\end{prop}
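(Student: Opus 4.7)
The plan for (a) is to iteratively reduce $(M,N)$ along the distance function $d$ until reaching an $(A,B)$-adjacent pair. Concretely, starting from the given $(M,N)$, if the pair fails to be $(A,B)$-adjacent, then Definition \ref{adjacent} provides either $P \in D_u^L(A)$ with $d(P,N) < d(M,N)$ or $Q \in D_v^L(B)$ with $d(M,Q) < d(M,N)$; I replace $M$ by $P$ (or $N$ by $Q$) and repeat. Since the distance is a nonnegative integer that strictly decreases at each step, the process terminates, yielding $(A,B)$-adjacent minors $M' \in D_u^L(A)$ and $N' \in D_v^L(B)$. By Proposition \ref{noviolation'}, there is no violation of $(M',N')$, hence in particular no strict violation. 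Proposition \ref{lessthanL} then guarantees that $P(M',N')$ has sufficiently small leading terms, and Proposition \ref{P=S} identifies $S(M',N') = P(M',N')$. Unpacking $P(M',N')$ according to Definition \ref{main} produces the required expression $\sum a_i P_i$ with $P_i \in D_u(A) \cup D_v(B)$ and $\LM(a_i P_i) < \LCM(\LM(M'), \LM(N'))$.

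For (b), the plan is to splice together three chains: an internal $A$-chain from $M$ to the $M'$ of part (a), the single $(A,B)$-adjacent step $M' \to N'$, and an internal $B$-chain from $N'$ to $N$. The first and third chains are produced by Proposition \ref{thereexistsachain} applied to $(M, M')$ in $D_u(A)$ and to $(N', N)$ in $D_v(B)$, respectively. Since $\LM(M'), \LM(N') \mid L$, the LCMs at the endpoints of these sub-chains divide $L$, so every intermediate minor has leading monomial dividing $L$ and therefore lies in $D_u^L(A) \cup D_v^L(B)$ as required. The middle step is exactly part (a). Concatenation delivers the desired chain $M = M_0, M_1, \ldots, M_k = N$ with the S-pair condition holding at every consecutive pair.

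The real difficulty has already been absorbed into the preceding auxiliary results, and (b) itself is essentially a bookkeeping exercise. Proposition \ref{noviolation'} extracted the structural consequence of $(A,B)$-adjacency (absence of any violation), and Proposition \ref{lessthanL} converted the absence of a strict violation into a direct bound on the leading monomials appearing in $P(M',N')$. The only technical checks remaining in the combined argument are (i) termination of the distance-reducing process in (a), which is immediate, and (ii) the compatibility condition that the LCM $\LCM(\LM(M'), \LM(N'))$ arising at the middle step divides the ambient $L$, which follows from $M' \in D_u^L(A)$ and $N' \in D_v^L(B)$. Both ensure the bound on leading terms is of the form needed to feed into Buchberger's criterion via Proposition \ref{chain}.
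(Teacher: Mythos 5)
Your proposal follows essentially the same route as the paper: for (a), locate an $(A,B)$-adjacent pair $(M',N')$ with leading monomials dividing $L$ and then invoke Propositions \ref{noviolation'}, \ref{lessthanL}, and \ref{P=S}; for (b), splice an $A$-chain from $M$ to $M'$, the single $(A,B)$-adjacent step, and a $B$-chain from $N'$ to $N$, using Proposition \ref{thereexistsachain} and the divisibility $\LM(M'),\LM(N')\mid L$ to keep everything inside $D_u^L(A)\cup D_v^L(B)$. The only differences are cosmetic (the paper takes $(M',N')$ minimizing $d$ over all of $D_u^L(A)\times D_v^L(B)$ rather than descending from the given $(M,N)$) and one small omission: your claim that failure of $(A,B)$-adjacency ``provides either $P$ or $Q$'' ignores the third disjunct in Definition \ref{adjacent}, namely $M=N$. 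Your descent can terminate at a pair $M'=N'$ (distance $0$), which is not $(A,B)$-adjacent and for which Proposition \ref{noviolation'} does not apply; you should note separately that $S(M',N')=0$ in that case, as the paper does. With that one-line fix the argument is correct.
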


\begin{proof} 
Let $M'\in D_u^L(A),N'\in D_v^L(B)$ be such that $d(M',N')$ is minimal. 

(a) If $M'=N'$, the $S(M',N')=0$ and the statement is trivial. So we assume $M'\neq N'$. 
Then $M',N'$ are $(A,B)$-adjacent. 
By Proposition \ref{noviolation'} there is no violation of $(M',N')$.  The claim follows by Proposition \ref{lessthanL}.

(b) Let $L'=\LCM(\LM(M),\LM(M')),\quad L''=\LCM(\LM(N'),\LM(N))$. Then both $L'$ and $L''$ divide $L$.  Let $M=M_0,\hdots,M_h=M'$ and $N'=M_{h+1},\hdots,M_k=N$ be chains of minors from $D_u^{L'}(A)$ and $D_v^{L''}(B)$, respectively, given by Proposition \ref{thereexistsachain}.  
Note that $\LM(M_j)|L$ for all $j=0,\hdots,k$, so (b) follows from Propositions \ref{thereexistsachain} and (a).
\end{proof}

\begin{proof}[Proof of Theorem \ref{theorem:bipartite determinantal}]
If $M,N\in D_u(A)$ (or  $M,N\in D_v(B)$), then the theorem follows from Proposition \ref{thereexistsachain} and Proposition \ref{chain}. 
If $M\in D_u(A)$ and $N\in D_v(B)$ (or the other way around), then the theorem follows from  Proposition \ref{chain''} and Proposition \ref{chain}.  
\end{proof}

\section{Applications}

\subsection{Nakajima's affine graded quiver variety}\label{section:bipartite det}
In this subsection we explain the geometric motivation and application of bipartite determinantal ideals.

Nakajima's affine graded quiver variety $\mathfrak{M}_0^\bullet(V,W)$ (resp.~nonsingular graded quiver variety $\mathfrak{M}^\bullet(V,W)$)  is defined as certain affine algebro-geometric quotient $\mu^{-1}(0)//G_V$ (resp.~ GIT quotient $\mu^{-1}(0)^{\rm s}/G_V$). 
Recall the following definitions in \cite[\S4]{Nakajima} with slightly modified notations. Consider a bipartite quiver, that is, a quiver $\mathcal{Q}$ with vertex set ${\rm V}_\mathcal{Q}={\rm V}_{\rm source}\sqcup {\rm V}_{\rm sink}$ and each arrow $h$ has source ${\rm s}(h)\in {\rm V}_{\rm source}$ and target ${\rm t}(h)\in {\rm V}_{\rm sink}$. For each vertex $i \in {\rm V}_\mathcal{Q}$ attach a vector space $V_i$ of dimension $u_i$. Define a decorated quiver by adding a new vertex $i'$ for each $i\in {\rm V}_\mathcal{Q}$, adding an arrow $i'\to i$ if $i$ is a sink, adding an arrow $i\to i'$ if $i$ is a source (so the decorated quiver is still bipartite). For each arrow $i'\to i$ attach a vector space $W_i(1)$ to $i$ and $W_i(q^2)$ to $i'$; for each arrow $i\to i'$ attach $W_i(q^3)$ to $i$ and $W_i(q)$ to $i'$. Denote $\xi_i=0$ if $i\in {\rm V}_{\rm sink}$, $\xi_i=1$ if $i\in {\rm V}_{\rm source}$.  
The affine graded quiver variety $\mathfrak{M}_0^\bullet(V,W)$ is the image of $\mathfrak{M}^\bullet(V,W)$ under the natural projection and is isomorphic to ${\bf E}_{V,W}=\{ (\oplus {\bf x}_i,\oplus {\bf y}_h) \}$
where:

$\bullet$ ${\bf x}_i\in \Hom(W_i(q^{\xi_i+2}),W_i(q^{\xi_i}))$ for each $i\in {\rm V}$, ${\bf y}_h\in \Hom(W_{{\rm s}(h)}(q^3),W_{{\rm t}(h)}(1))$ for each arrow $h$; 

$\bullet$ $\dim\big({\rm Im }  {\bf x}_i + \sum_{i(h)=i} {\rm Im } {\bf y}_h\big)\le u_i$ for $i\in {\rm V}_{\rm sink}$, $\dim {\rm Im } \big({\bf x}_i\oplus \bigoplus_{o(h)=i}{\bf y}_h\big) \le u_i$ for $i\in  {\rm V}_{\rm source}$. 

In terms of matrices, the second condition can be written as 
``$\rank(A_\alpha)\le u_\alpha$ for $\alpha\in {\rm V}_{\rm sink}$, 
$\rank(A_\beta)\le u_\beta$ for $\beta\in {\rm V}_{\rm source}$''. 
So the affine graded quiver variety is defined by the bipartite determinantal ideal $I_{\mathcal{Q},{\bf m},{\bf u}}$. This is our motivation to study the latter. 

Nakajima's graded quiver varieties can be used in the study of finite-dimensional representations of the quantum affine algebras (\cite{Nakajima_JAMS}) and cluster algebras (see \cite{Nakajima, KQ, Qin, Li}). 
By studying the algebraic and combinatorial properties of the bipartite determinantal ideals, we can show that the corresponding Nakajima's affine graded quiver varieties are Cohen-Macaulay and compute their invariants, such as their Hilbert series (see \cite{Li3}).

\subsection{Double determinantal ideals}
For the rest of the paper, we fix the notation of double determinantal ideals as follows.
\begin{defn}\label{doubledeterminantalideal}.
	Let $m, n, r$ be positive integers, let $X= \{ X^{(k)}=[x^{(k)}_{ij}]\}_{k=1}^r $ be a set of variable matrices of dimension $m\times n$ (we call $i,j,k$ the row index, column index, and page index, respectively).  Define matrices 
$$ A= [ X^{(1)}|X^{(2)}|\hdots|X^{(r)} ], 
	\hspace{12pt}
	B= \left[
	\begin{array}{c}
	X^{(1)}  \\ \hline
	X^{(2)} \\ \hline
	\vdots \\ \hline
	X^{(r)}
	\end{array}
	\right] $$	
For positive integers  $u,v$,  $D_u(A)= \{u\text{-minors of }A\}$, $D_v(B)=\{v\text{-minors of }B\}$.  
 The double determinantal ideal $I_{m,n,u,v}^{(r)}$ is the ideal of $K[X]$ generated by $D_u(A)\cup D_v(B)$. 
 \end{defn}
\begin{exmp}\label{eg:m=n=r=u=v=2}
Take $m=n=r=u=v=2$, the matrices $A$, $B$, and the corresponding double determinantal ideal are as follows (where we use $a,b,\dots$ to denote $x^{(1)}_{11}, x^{(1)}_{12}, \dots$ for simplicity):
$$ A=\left[ \begin{matrix}x^{(1)}_{11}&x^{(1)}_{12}\\ x^{(1)}_{21}&x^{(1)}_{22}\end{matrix}\left|\,
\begin{matrix}x^{(2)}_{11}&x^{(2)}_{12}\\ x^{(2)}_{21}&x^{(2)}_{22}\end{matrix}  \right.\right]
=\left[ \begin{matrix}a&b\\ c&d\end{matrix}\left|\,
\begin{matrix}a'&b'\\ c'&d'\end{matrix}  \right.\right]
,
\quad
B=\begin{bmatrix}x^{(1)}_{11}&x^{(1)}_{12}\\ x^{(1)}_{21}&x^{(1)}_{22}\\
\hline
x^{(2)}_{11}&x^{(2)}_{12}\\ x^{(2)}_{21}&x^{(2)}_{22}
\end{bmatrix}, 
=\begin{bmatrix}a&b\\ c&d\\
\hline
a'&b'\\ c'&d'
\\
\end{bmatrix}, 
$$
$$\aligned
I_{2,2,2,2}^{(2)}=\langle
\begin{vmatrix}a&b\\ c&d\end{vmatrix},
\begin{vmatrix}a'&b'\\ c'&d'\end{vmatrix},
\begin{vmatrix}a&a'\\c&c'\end{vmatrix},
\begin{vmatrix}a&b'\\c&d'\end{vmatrix},
\begin{vmatrix}b&a'\\d&c'\end{vmatrix},
\begin{vmatrix}b&b'\\d&d'\end{vmatrix},
\begin{vmatrix}a&b\\a'&b'\end{vmatrix},
\begin{vmatrix}a&b\\c'&d'\end{vmatrix},
\begin{vmatrix}c&d\\a'&b'\end{vmatrix},
\begin{vmatrix}c&d\\c'&d'\end{vmatrix}
\rangle. 
\endaligned
$$
\end{exmp}

\subsection{Tensors}\label{tensors}

When studying the arrangement of the variables in our two matrices, it becomes clear that there is a higher order structure at play.  Each matrix may be thought of as a slice of a 3-dimensional array of variables (see Figure \ref{fig:slice}).  

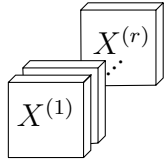
\begin{figure}[h!]
	\centering
	\begin{tikzpicture}
	\pgfmathsetmacro{\x}{1}
	\pgfmathsetmacro{\y}{.25}
	\pgfmathsetmacro{\z}{1}
	\path (0,0,\y) coordinate (A) (\x,0,\y) coordinate (B) (\x,0,0) coordinate (C) (0,0,0)
	coordinate (D) (0,\z,\y) coordinate (E) (\x,\z,\y) coordinate (F) (\x,\z,0) coordinate (G)
	(0,\z,0) coordinate (H);
	\draw (A)-- (B)-- (C)--(G)--(F)--(B) (A)-- (E)--(F)--(G)--(H)--(E);
	\draw (\x/2,.6*\z,\y)node{$X^{(1)}$};
	
	\path (.1*\x,\z,0) coordinate (I) (0,\z,-\y) coordinate (J) (0,\z,-2*\y) coordinate (K) (\x,\z,-2*\y) coordinate (L) (\x,0,-2*\y) coordinate (M) (\x,0,-\y) coordinate (N) (\x,.09*\z,0) coordinate (O) (\x,\z,-\y) coordinate (P)  ;
	\draw (I) -- (J) -- (K) -- (L) -- (M) -- (N) -- (O) (J) -- (P) -- (N) (L) -- (P);
	
	\path (.1*\x,.42*\z,0-8*\y ) coordinate (Q) (0,\z,-\y-8*\y) coordinate (R) (0,\z,-2*\y-8*\y) coordinate (S) (\x,\z,-2*\y-8*\y) coordinate (T) (\x,0,-2*\y-8*\y) coordinate (U) (\x,0,-\y-8*\y) coordinate (V) (.42*\x,.09*\z,0-8*\y) coordinate (W) (\x,\z,-\y-8*\y) coordinate (X)  ;
	\draw (Q) -- (R) -- (S) -- (T) -- (U) -- (V) -- (W) (R) -- (X) -- (V) (T) -- (X);
	\draw (.45*\x,.5*\z,-10*\y)node{$X^{(r)}$};
	
	\draw (.45*\x,.23*\z,-9*\y) node[rotate=45]{$...$};
	\end{tikzpicture}
	\caption{$\Scan(X)_3$ of 3-dimensional tensor $X$}
	\label{fig:slice}
\end{figure}

Thought of as a generalization of matrix, an $n$-dimensional array is called a tensor, and is a convenient way to represent multi-indexed data.  Applications are found in many fields of science, mathematics, and statistics.  
It is often beneficial to view tensors as multilinear maps on tensor products of vector spaces, and define them as objects that are independent of choice of bases, see \cite{Landsberg}.  However, for our purposes it suffices to assume that some basis is chosen, and define a tensor to be the representation of a multilinear map, given as an $n$-dimensional array of values in $K$.  
\begin{defn} [Definition 6.1.3 \cite{Bocci}]
	A tensor $X$ over $K$ of dimension (or order) $n$ of type (or size) 
	$a_1\times \hdots \times a_n$
	is a multidimensional table of elements of $K$, in which each element is determined by a multi-index $(i_1,\hdots,i_n)$ where each $i_j$ ranges from 1 to $a_j$.  Equivalently, $X$ is a multilinear map
	$ X:K^{a_1}\times \hdots \times K^{a_n}\rightarrow K$
where we consider the standard basis for each vector space $K^{a_i}$.
\end{defn}

When visualizing 3-dimensional tensors, we will adopt the convention that the first index increases downward along a vertical axis, the second increases across a horizontal axis, and the third increases backward along the third axis.  So a 3-dimensional tensor of variables of type $m\times n\times r$ will be given as follows.
\begin{center}
	\begin{tikzpicture}[scale=.7]
	\draw (0,0)node{$x_{111}$};
	\draw (2,0)node{$x_{1n1}$};
	
	\draw (0,-1.5)node{$x_{m11}$};
	\draw (2,-1.5)node{$x_{mn1}$};
	
	\draw[dashed] (0.5,0) to (2-.5,0);
	\draw[dashed] (0.5,-1.5) to (2-.5,-1.5);
	\draw[dashed] (0,0-.25) to (0,-1.5+.25);
	\draw[dashed] (2,0-.25) to (2,-1.5+.25);
	
	\draw (1.5,1)node{$x_{11r}$};
	\draw (3.5,1)node{$x_{1nr}$};
	\draw (3.5,-.5)node{$x_{mnr}$};
	
	\draw[dashed] (2,1) to (3,1);
	\draw[dashed] (3.5,1-.25) to (3.5,-.5+.25);
	\draw[dashed] (0+.25,0+.25) to (1.5-.35,1-.15);
	\draw[dashed] (2+.25,0+.25) to (3.5-.35,1-.15);
	\draw[dashed] (2+.25,-1.5+.25) to (3.5-.35,-.5-.15);
	
	\draw (-1.75,-.5) node{$X= $};
	\end{tikzpicture} 
\end{center}

\begin{defn}[Definition 8.2.1 \cite{Bocci}] \label{contraction}
	For an $n$-dimensional tensor $X=(x_{i_1,\hdots,i_n})$ of type $a_1\times \hdots \times a_n$, let $J\subseteq [n] = \{ 1,\hdots,n\}$ and set $\{ j_1,\hdots,j_q  \} = [n]\setminus J$.  For any choice $k_1,\hdots,k_q$ where $1\leq k_s \leq a_{j_s}$, define $X^J$, the $J$-contraction of $X$, to be the $q$-dimensional tensor of type $a_{j_1}\times \hdots \times a_{j_q}$ whose entry indexed by $(k_1,\hdots,k_q)$ is given by \[ x_{k_1,\hdots,k_q} = \sum x_{i_1,\hdots,i_n}   \]
	where the sum ranges over all entries where $i_{j_s}=k_s$ are fixed for all $s=1,\hdots ,q$.  
\end{defn}

\begin{exmp}\label{tensorex}
	Let $X$ be the following 3-dimensional tensor of type $2\times2\times2$:
	\begin{center}
		\begin{tikzpicture}[scale=.6]
		\draw (0,0)node{$0$};
		\draw (2,0)node{$2$};
		
		\draw (0,-2)node{$1$};
		\draw (2,-2)node{$3$};
		
		\draw (0.25,0) to (2-.25,0);
		\draw(0.25,-2) to (2-.25,-2);
		\draw(0,0-.4) to (0,-2+.4);
		\draw (2,0-.4) to (2,-2+.4);
		
		\draw (1.5,1)node{$4$};
		\draw (1.5,-1)node{$5$};
		
		\draw (3.5,1)node{$6$};
		\draw (3.5,-1)node{$7$};

		\draw (1.5+.25,1) to (3.5-.25,1);
		\draw (3.5,1-.4) to (3.5,-1+.4);
		
		\draw (0+.21,0+.14) to (1.5-.21,1-.14);
		\draw (2+.21,0+.14) to (3.5-.21,1-.14);
		\draw (2+.21,-2+.14) to (3.5-.21,-1-.14);
		
		\draw[dashed] (0+.21,-2+.1) to (1.5-.21,-1-.1);
		\draw[dashed] (1.5,1-.4) to (1.5,-1+.4);
		\draw[dashed] (1.5+.25,-1) to (3.5-.25,-1);
		
		\draw (-1,-.5)node{$X=$};	
		\end{tikzpicture}
	\end{center}
	and $J=\{ 2\}$, $L=\{ 3\}$, and $M=\{ 2,3 \}$.  Then the contractions along $J$ and $L$ are the matrices 
	\[ X^J = 
	\begin{pmatrix}
	2 & 10 \\
	4 & 12
	\end{pmatrix} , \hspace{1cm}   X^{L}=
	\begin{pmatrix}
	4 & 8 \\
	6 & 10
	\end{pmatrix} \]
	and the contraction along $M$ is the vector $ X^{M} = (12,16) $. 
\end{exmp}

\begin{rem}
	We note that $P=\frac{1}{28}X$ gives a joint probability distribution on three binomial random variables, since all entries of $P$ are non-negative and sum to 1.  Therefore, the contractions of a probability tensor are marginal distributions.  
\end{rem}

\begin{defn} [Definition 8.3.1 \cite{Bocci}] \label{scan}
	For an $n$-dimensional tensor $X=(x_{i_1,\hdots,i_n})$ of type $a_1\times \hdots \times a_n$, let $j\in  \{ 1,\hdots,n\}$.  Define $\Scan(X)_j$ to be the set of all $(n-1)$-dimensional tensors obtained by fixing the $j$-th index.
	\end{defn}

Referring to Example \ref{tensorex}, we have
$\Scan(X)_1 = \left \{
\begin{pmatrix}
0 & 2 \\
4 & 6
\end{pmatrix} ,
\begin{pmatrix}
1 & 3 \\ 
5 & 7
\end{pmatrix}  \right \}$,
$\Scan(X)_2 = \left \{  
\begin{pmatrix}
0 & 4 \\
1 & 5
\end{pmatrix},
\begin{pmatrix}
2 & 6 \\
3 & 7
\end{pmatrix} \right\}$, 
$\Scan(X)_3 =  \left\{
\begin{pmatrix}
0 & 2 \\
1 & 3
\end{pmatrix},
\begin{pmatrix}
4 & 6 \\
5 & 7
\end{pmatrix} \right\}$.
 
We make note of two facts.  First, that the contraction along a single axis is simply the sum of the tensors in the scan in the direction of that axis.  Therefore, in light of probability theory, the elements of the scan are scalar multiples of conditional distributions.  Secondly, the matrices in a scan of a 3-dimensional tensor of variables may be concatenated to create exactly the matrices $A$ and $B$ in Definition \ref{doubledeterminantalideal}.  This process is called flattening of a tensor, and it may be extended to higher dimensions, but for the purposes of this paper, we use the definition as applied to a 3-dimensional tensor. 

\begin{defn}[Definition 8.3.4 \cite{Bocci}]\label{flattening1}
	Let $X=(x_{ijk})$ be a 3-dimensional tensor of type $m\times n\times r$.  For each fixed $k=1,\hdots ,r$ let $X^{(k)}\in \Scan(X)_3$ be the matrix
	\[ X^{(k)}=
	\begin{pmatrix}
	x_{11k} & \hdots & x_{1nk} \\
	\vdots & & \vdots \\
	x_{m1k} & \hdots &  x_{mnk}
	\end{pmatrix}.\]
	Define the flattenings by concatenating the matrices of $\Scan(X)_3$ as follows.
	\[ F_1(X) = [ X^{(1)} | \hdots | X^{(r)} ], \hspace{1cm}  F_2(X) = [ (X^{(1)})^T | \hdots | (X^{(r)})^T ] \]
\end{defn}

One can see that Definition \ref{doubledeterminantalideal} may be altered for slightly more generality.  By endowing $X$ with the structure of a 3-dimensional tensor, we see that $F_1(X)=A$ and $F_2(X)=B^T$.  
Therefore, the double determinantal ideal is generated by minors of two particular flattenings of a 3-dimensional tensor of variables.  We will consider flattenings and their applications in more detail in \S\ref{tripledeterminantalideal} and \S\ref{algebraicstatistics}.  In particular, we see that our problem focuses on two of the three natural ways to flatten a 3-dimensional tensor, and it would be natural to ask whether our proof might extend to include the third flattenings of $X$, as well.  

\subsection{Generalizations}\label{tripledeterminantalideal}
In this subsection we consider how the main result fits in the more general context of tensors.  A flattening is just one special kind of tensor reshaping, which can be thought of as the removal of some of the structure of the original tensor.  Flattening is a particularly useful reshaping, since matrices are easy to express visually.  The smallest internal sub-structure of a tensor consists of the 1-dimensional arrays (vectors), called fibers.  A 3-dimensional tensor has three ways to slice into fibers, each running along the direction of an axis, we may call them column fibers, row fibers, and tube fibers (for a picture, see \cite[Figure 2.3.1]{Landsberg}).  
We may classify flattenings of a 3-dimensional tensor $X$ of type $m\times n \times r$ into three categories of matrices, ones in which the columns are defined by the column fibers, the row fibers, or the tube fibers. Some generic examples might look like the following. 
\[ X_1 = \begin{bmatrix}
x_{111} & \hdots & x_{1nr} \\
\vdots & & \vdots \\
x_{m11} & \hdots & x_{mnr}
\end{bmatrix},\quad
X_2 =  \begin{bmatrix}
x_{111} & \hdots & x_{m1r} \\
\vdots & & \vdots \\
x_{1n1} & \hdots & x_{mnr}
\end{bmatrix},\quad 
X_3 =  \begin{bmatrix}
x_{111} & \hdots & x_{mn1} \\
\vdots & & \vdots \\
x_{11r} & \hdots & x_{mnr}
\end{bmatrix} \]
Since we are interested in the ideals generated by the minors of these matrices, we may arbitrarily choose the order in which to write the columns.  
Therefore, we may categorize flattenings into three classes of matrices, with each class containing all matrices whose rows are indexed by one specified index.   
\begin{defn}
	Let $X=(x_{i_1,\hdots,i_n})$ be an $n$-dimensional tensor of variables of size $a_1\times \hdots \times a_n$.  
	For $1\le j\le n$, 
	define the $j^{th}$ \textit{flattening class} $\overline{X}_j$ to be the set of all $a_j\times(a_1\hdots a_{j-1}a_{j+1}\hdots a_n) $ matrices containing all variables $x_{1,\hdots,1},\hdots,x_{a_1,\hdots,a_n}$, in which each column is of the following form:
	$$\begin{bmatrix} x_{i_1,\dots,1,\dots,i_n}\\x_{i_1,\dots,2,\dots,i_n}\\ \vdots \\ x_{i_1,\dots,a_j,\dots,i_n}\end{bmatrix}$$
that is, the variables in a column have the same fixed indices except for the $j^{th}$ index.  
      We call each matrix in the $j^{th}$ flattening class a $j^{th}$ \textit{flattening matrix}.  
      So any two $j^{th}$ flattening matrices are the same up to permuting columns. 
 \end{defn}

Theorem \ref{theorem:bipartite determinantal} implies the following. For any 3-dimensional tensor of variables $X$ of type $m\times n\times r$, there exist $X_1\in \overline{X}_1$ and $X_2\in \overline{X}_2$ which are block matrices of the form 
$X_1 = [Y_1|\cdots | Y_r]$, $X_2= \left[ Y_1^T|\cdots | Y_r^T \right]$
with $Y_1,\hdots,Y_{r}\in \Scan(X)_3$, such that $D_u(X_1)\cup D_v(X_2)$ forms a Gr\"obner basis, with respect to a monomial order that is consistent with both $X_1$ and $X_2$.  More generally, we have the following.

\begin{cor} \label{generalize}
	Let $X$ be an $n$-dimensional tensor of variables of type $a_1\times \hdots \times a_n$.  Then for any $j,k\in \{ 1,\hdots,n\} $ and any positive integers   $u_j, u_k$, there exists a $j^{th}$ flattening matrix $X_j$ and a $k^{th}$ flattening matrix $X_k$ such that the minors $D_{u_j}(X_j)\cup D_{u_k}(X_k)$ form a Gr\"obner basis with respect to an appropriate monomial order.
\end{cor}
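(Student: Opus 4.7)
The strategy is to reduce the $n$-dimensional statement to Theorem \ref{theorem:bipartite determinantal} by reshaping $X$ into a 3-dimensional tensor whose first two flattenings become the matrices $A_\alpha, A_\beta$ of a bipartite quiver with exactly two vertices. First I would dispose of the degenerate case $j=k$: here $X_j$ and $X_k$ differ only by a column permutation, so $D_{u_j}(X_j)\cup D_{u_k}(X_k)$ lies in the determinantal ideal of a single generic matrix, whose minors are known to form a Gr\"obner basis under any diagonal order by the classical result (recoverable from Theorem \ref{theorem:bipartite determinantal} by taking $\mathcal{Q}$ to be a single vertex with a loop).

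For the main case $j\neq k$, after relabeling I assume $j=1$ and $k=2$. The key move is to collapse the indices $(i_3,\dots,i_n)$ into a single compound index $c$ taking $s:=a_3 a_4\cdots a_n$ values, producing a 3-dimensional tensor $Y$ of type $a_1\times a_2\times s$ with scan pages $Y^{(1)},\dots,Y^{(s)}$. I would then define
$$X_1 = [Y^{(1)} \,|\, Y^{(2)}\,|\, \cdots \,|\, Y^{(s)}], \qquad X_2 = [(Y^{(1)})^T \,|\,(Y^{(2)})^T \,|\, \cdots \,|\, (Y^{(s)})^T].$$
A direct check from the definitions confirms $X_1\in\overline{X}_1$ and $X_2\in\overline{X}_2$; the chosen total order on the compound index $c$ merely dictates the column order of $X_1$ and $X_2$, which is immaterial for flattening-class membership.

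Now I would invoke Theorem \ref{theorem:bipartite determinantal} applied to the bipartite quiver $\mathcal{Q}:\alpha\leftarrow\beta$ with $s$ parallel arrows, dimension vector ${\bf m}=(a_1,a_2)$ (labeling $\alpha$ and $\beta$ as vertices $1$ and $2$), and rank parameters ${\bf u}=(u_1-1,u_2-1)$. The resulting matrices $A_\alpha$ and $A_\beta$ coincide with $X_1$ and $X_2^T$, respectively; since the set of $u$-minors of any matrix equals that of its transpose, the natural generators of $I_{\mathcal{Q},{\bf m},{\bf u}}$ are exactly $D_{u_1}(X_1)\cup D_{u_2}(X_2)$. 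The explicit consistent lex order described after Definition \ref{order}---order pages by index $k$, then use reading order within each page---pulls back along the reshaping to a lex order on the original variables $x_{i_1,\dots,i_n}$ with primary key on $(i_3,\dots,i_n)$, secondary key $i_1$, tertiary key $i_2$, and this order is consistent with both $A_\alpha$ and $A_\beta$. Theorem \ref{theorem:bipartite determinantal} then yields that $D_{u_1}(X_1)\cup D_{u_2}(X_2)$ is a Gr\"obner basis.

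The proof is essentially one of packaging, so I do not expect any substantial obstacle; the only point requiring care is to verify that a $u$-minor of a flattening of $Y$ is the same polynomial in $K[X]$ as the corresponding $u$-minor of the analogous flattening of $X$, which is immediate from the fact that the reshaping $X\mapsto Y$ merely relabels column indices of flattening matrices without altering the set of variables appearing in each column.
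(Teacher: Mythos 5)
Your argument is essentially the paper's own proof: collapse $(i_3,\dots,i_n)$ into a compound page index to build a $3$-tensor $Y$, take $X_1=[Y^{(1)}|\cdots|Y^{(s)}]$ and $X_2=[(Y^{(1)})^T|\cdots|(Y^{(s)})^T]$, identify these with $A_\alpha$ and $A_\beta^T$ for the two-vertex bipartite quiver with $s$ parallel arrows, pick the lex order keyed first on the page, then row, then column, and invoke Theorem \ref{theorem:bipartite determinantal}; your rank parameters ${\bf u}=(u_1-1,u_2-1)$ and the transpose bookkeeping are correct. The only small slip is the aside about recovering the classical single-determinantal case from a quiver consisting of ``a single vertex with a loop'': such a quiver is not bipartite (source and sink sets must be disjoint), so that framing does not apply; one instead uses a two-vertex quiver with a single arrow (or just the classical result directly), though since the $j=k$ case reduces to a union of minors of one generic matrix where the smaller-size minors already form a Gr\"obner basis, your conclusion there is still fine.
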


\begin{proof}
By permuting the indices, we may assume that $j=1$ and $k=2$.  
Write the $C=\{(i_3,\dots,i_n):1\le i_t\le a_t \textrm{ for } t=3,\dots,n\}$ as an ordered list of $r=\prod_{t=3}^n a_t$ tuples, and for $1\le i\le r$ define 	
	\[ Y_i= \begin{bmatrix}
	y_{11}^{(i)}& \hdots & y_{1a_2}^{(i)}\\
	\vdots & & \vdots \\
	y_{a_11}^{(i)}& \hdots & y_{a_1a_2}^{(i)}
	\end{bmatrix} 
	:=  \begin{bmatrix}
	x_{11b_3\hdots b_n} & \hdots & x_{1a_2b_3\hdots b_n} \\
	\vdots & & \vdots \\
	x_{a_11b_3\hdots b_n} & \hdots & x_{a_1a_2b_3\hdots b_n}
	\end{bmatrix} \]
where $(b_3,\hdots,b_n)$ is the $i$-th element of $C$.	
Let
$X_1 = [Y_1|\hdots | Y_r]$,  $X_2= \left[ Y_1^T|\hdots | Y_r^T \right]$. 
Let  ``$>$''  be the monomial order on the variables of $X$ induced by
	\[ y_{j_1k_1}^{(i_1)}> y_{j_2k_2}^{(i_2)} \Leftrightarrow (i_1<i_2)\text{ or } (i_1=i_2,j_1<j_2)\text{ or } (i_1=i_2,j_1=j_2,k_1<k_2).\]  
	Then the corollary follows from Theorem \ref{theorem:bipartite determinantal}.
\end{proof}

\begin{defn} \cite[Definition 3.4.1]{Landsberg}
	The \textit{subspace variety} $\Sub_{b_1,\hdots,b_n}(K^{a_1}\otimes \hdots \otimes K^{a_n}) 
	:= \{ T\in K^{a_1}\otimes \hdots \otimes K^{a_n} | \dim T((K^{a_i})^*)\leq b_i,  \forall 1\leq i\leq n \}  $ is the common zero set of all $(b_i+1)$-minors of the $i^{th}$ matrix flattening of $X$. 
\end{defn}

\begin{cor}
	For any $n$-dimensional tensor of variables of size $a_1\times \hdots \times a_n$, a multilinear rank vector $(b_1,\hdots,b_n)$, and two integers $i,j\in\{1,\dots,n\}$, if $b_i\ge a_i$ for $i\neq j,k$, then the subspace variety $\Sub_{b_1,\hdots,b_n}(K^{a_1}\otimes \hdots \otimes K^{a_n})$ is defined by the prime ideal whose generators, the $(b_j+1)$-minors of $X_j$ and the $(b_k+1)$-minors of $X_k$, can be shown to form a Gr\"obner basis under an appropriate monomial order.  
\end{cor}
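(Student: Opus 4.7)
The plan is to combine three pieces: a set-theoretic reduction to two flattenings, a direct application of Corollary \ref{generalize}, and an irreducibility/radicality argument for primeness.

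First I would observe that the hypothesis $b_i\ge a_i$ for all $i\notin\{j,k\}$ makes the rank condition in those directions vacuous: every $T\in K^{a_1}\otimes\cdots\otimes K^{a_n}$ automatically satisfies $\dim T((K^{a_i})^*)\le a_i\le b_i$. Hence set-theoretically
\[
\Sub_{b_1,\ldots,b_n}(K^{a_1}\otimes\cdots\otimes K^{a_n})
=\{T:\rank(X_j(T))\le b_j\ \text{and}\ \rank(X_k(T))\le b_k\},
\]
which is the common zero locus of the $(b_j+1)$-minors of $X_j$ and the $(b_k+1)$-minors of $X_k$.

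Second, the Gr\"obner basis assertion is an immediate application of Corollary \ref{generalize} with $u_j=b_j+1$ and $u_k=b_k+1$: there exist explicit flattenings $X_j$ and $X_k$ (constructed by grouping the remaining indices into ``pages'' as in the proof of that corollary) and a lexicographic order consistent with both, under which $D_{b_j+1}(X_j)\cup D_{b_k+1}(X_k)$ is a Gr\"obner basis. Note that this ideal coincides with the bipartite determinantal ideal $I_{\mathcal{Q},\mathbf{m},\mathbf{u}}$ of the bipartite quiver having the $j$-th and $k$-th directions as its two vertex classes and the remaining indices encoding the arrows.

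Third, for primeness I would argue in two steps. Because the chosen monomial order is consistent with both $X_j$ and $X_k$, every leading monomial of a minor in the Gr\"obner basis is a squarefree product of distinct variables (the main-diagonal entries of the corresponding submatrix). Hence the initial ideal is a squarefree monomial ideal and therefore radical, which forces the ideal $I:=\langle D_{b_j+1}(X_j)\cup D_{b_k+1}(X_k)\rangle$ itself to be radical by the standard upper-semicontinuity argument for Gr\"obner degenerations. It then suffices to show that the zero set $V(I)=\Sub_{b_1,\ldots,b_n}$ is irreducible, since an irreducible variety defined by a radical ideal has a prime defining ideal. Irreducibility of the subspace variety is classical: it is the image of the surjective morphism
\[
\prod_{i\in\{j,k\}}\Hom(K^{b_i},K^{a_i})\times(K^{b_j}\otimes K^{b_k}\otimes\bigotimes_{i\ne j,k}K^{a_i})\longrightarrow K^{a_1}\otimes\cdots\otimes K^{a_n},
\]
sending $(A_j,A_k,S)$ to $(A_j\otimes A_k\otimes\mathrm{id})(S)$, from an irreducible (affine) source; alternatively it arises as the affine graded Nakajima quiver variety associated to the bipartite quiver mentioned above, as discussed in \S\ref{section:bipartite det}.

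The main obstacle is the primeness step: the Gr\"obner basis result only delivers a radical initial ideal, and one still needs the geometric input that the subspace variety is irreducible. I would handle this by citing the standard parametrization above (see \cite{Landsberg}) rather than re-deriving it, since the novel content of the corollary is really the Gr\"obner basis statement, which is the content provided by Corollary \ref{generalize}.
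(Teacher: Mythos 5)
Your proposal is correct, and in fact it is \emph{more} complete than the paper's own proof. The paper's argument is two sentences: it observes that for $i\neq j,k$ the hypothesis $b_i\ge a_i$ forces the $i$-th flattening to have no $(b_i+1)$-minors, so the defining generating set reduces to $D_{b_j+1}(X_j)\cup D_{b_k+1}(X_k)$, and then cites Corollary \ref{generalize} for the Gr\"obner basis claim. This is exactly your first two steps. However, the paper's proof stops there and never addresses the assertion that the defining ideal is \emph{prime}; it implicitly treats primeness as known (presumably via \cite{FK} or \cite{Li} for double/bipartite determinantal ideals), without saying so. Your third step fills this gap cleanly and by a standard, self-contained route: the diagonal term order gives a squarefree initial ideal, hence the ideal is radical, and the subspace variety is irreducible because it is the (Zariski-closed) image of the parametrization $(A_j,A_k,S)\mapsto(A_j\otimes A_k\otimes\mathrm{id})(S)$ from an irreducible affine source; a radical ideal cutting out an irreducible variety is prime. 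The parametrization is indeed surjective onto $\Sub_{b_1,\ldots,b_n}$ under the stated hypotheses (one checks that a tensor whose $j$-th and $k$-th flattenings have small rank lies in $V_j\otimes V_k\otimes\bigotimes_{i\neq j,k}K^{a_i}$ for suitable subspaces, by writing it in adapted bases), so the argument goes through. In short: same reduction and same use of Corollary \ref{generalize} for the Gr\"obner basis part, but you supply the missing primeness justification that the paper elides.
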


\begin{proof}
For $i\neq j,k$, since $b_i\ge a_i$, there are no $(b_i+1)$-minors in the $i^{th}$ matrix flattening of $X$. So the statement follows from Corollary \ref{generalize}.
\end{proof}

We now consider a different generalization of our result, in the context of tensors.  Since there are exactly three flattening classes, it is natural to consider including the minors of the third flattening class.  
\begin{defn}
	Let $X$ be a 3-dimensional tensor of variables of size $m\times n\times r$.  For any integers $1< u\leq \min(m,rn)$, $1 < v \leq \min(n,mr)$, and $1 < w\leq \min(r,mn)$, let $I_{u,v}$ be the double determinantal ideal generated by the $u$-minors of a $1^{st}$ flattening matrix and the $v$-minors of a  $2^{nd}$ flattening matrix.  Let the \textit{triple determinantal ideal}, $I_{u,v,w}$, be defined by the generators of $I_{u,v}$ together with the $w$-minors of a $3^{rd}$ flattening matrix.  
\end{defn}

\begin{prop}\label{double=triple}
	For any 3-dimensional tensor of variables of type $m\times n\times r$ and any positive integers $2\leq u\leq \min(m,rn)$, $2\leq v \leq \min(n,mr)$, and $2\leq w\leq \min(r,mn)$, we have  
	\[ I_{u,v}=I_{u,v,w} \Leftrightarrow (u-1)(v-1) \leq w-1. \]
	As a consequence, if the largest number of $u-1, v-1, w-1$ is not less than the product of the other two numbers, then the triple determinantal ideal $I_{u,v,w}$ is a double determinantal ideal, thus satisfies all the properties of the latter.
\end{prop}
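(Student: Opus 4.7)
The plan is to prove the biconditional by combining a multilinear rank inequality for the ``$\Leftarrow$'' direction with an explicit tensor construction for the ``$\Rightarrow$'' direction. The central geometric fact I will use is the classical multilinear rank inequality: for any $T \in K^m \otimes K^n \otimes K^r$, if $r_i := \rank(X_i(T))$ denotes the rank of the $i$-th matrix flattening of $T$, then $r_3 \leq r_1 r_2$ (and cyclically). This follows from the standard fact that $T$ lies in a subspace $V_1 \otimes V_2 \otimes V_3$ with $\dim V_i = r_i$, so $X_3(T)$ factors through $(V_1 \otimes V_2)^\ast$, a space of dimension at most $r_1 r_2$.

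For the ``$\Leftarrow$'' direction, I assume $(u-1)(v-1) \leq w-1$. At every $T \in V(I_{u,v})$ one has $r_1 \leq u-1$ and $r_2 \leq v-1$, so $r_3 \leq (u-1)(v-1) \leq w-1$, whence every $w$-minor of $X_3$ vanishes at $T$. Hence $V(I_{u,v}) \subseteq V(I_{u,v,w})$, and since the reverse inclusion is automatic from $I_{u,v}\subseteq I_{u,v,w}$, the two varieties coincide. By Theorem \ref{theorem:bipartite determinantal}, $I_{u,v}$ has a Gr\"obner basis whose leading monomials are square-free diagonal products, which (by a standard principle) implies that $I_{u,v}$ is radical. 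Passing to the algebraic closure and applying Hilbert's Nullstellensatz then gives $I_{u,v,w} \subseteq I(V(I_{u,v,w})) = I(V(I_{u,v})) = I_{u,v}$, forcing equality.

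For the ``$\Rightarrow$'' direction, I assume $(u-1)(v-1) \geq w$ and exhibit a tensor $T \in V(I_{u,v}) \setminus V(I_{u,v,w})$. Fix any injection $\iota\colon \{1,\ldots,w\} \hookrightarrow \{1,\ldots,u-1\} \times \{1,\ldots,v-1\}$ (possible by the hypothesis) with components $(\iota_1(s),\iota_2(s))$, and use $w \leq r$ to place $e_1,\ldots,e_w$ inside $K^r$; set
\[
T = \sum_{s=1}^{w} e_{\iota_1(s)} \otimes e_{\iota_2(s)} \otimes e_s.
\]
All nonzero entries of $T$ have row index in $\{1,\ldots,u-1\}$ and column index in $\{1,\ldots,v-1\}$, so $r_1 \leq u-1$ and $r_2 \leq v-1$; but the third flattening of $T$ has $e_1,\ldots,e_w$ among its columns (with all other columns zero), so $r_3 = w$. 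Hence some $w$-minor of $X_3$ is nonzero at $T$, proving $I_{u,v,w} \not\subseteq I_{u,v}$. The stated consequence of the proposition follows immediately from the symmetry of $I_{u,v,w}$ under permutations of the three tensor directions.

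The step I anticipate to require the most care is the passage from variety equality to ideal equality in the ``$\Leftarrow$'' direction: it hinges on the principle that an ideal whose initial ideal under some monomial order is a square-free monomial ideal is itself radical, together with a base-change argument to the algebraic closure to invoke the Nullstellensatz. Should this pathway feel too heavy, an alternative would be to invoke the primality of $I_{u,v}$ proved by Fieldsteel--Klein \cite{FK} via liaison theory, which renders the radicality step immediate.
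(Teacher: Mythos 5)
Your proposal is correct and takes essentially the same approach as the paper: for ``$\Leftarrow$'' you establish $V(I_{u,v})=V(I_{u,v,w})$ via the multilinear rank inequality $r_3\le r_1 r_2$ (which the paper proves explicitly by factoring the first two flattenings) and then upgrade to ideal equality using radicality of $I_{u,v}$ from the square-free initial ideal plus base change to the algebraic closure and the Nullstellensatz; for ``$\Rightarrow$'' you exhibit a tensor with $r_1\le u-1$, $r_2\le v-1$, $r_3\ge w$, which is the same idea as the paper's explicit construction (only phrased via an arbitrary injection rather than the specific formula $k=(i-1)(v-1)+j$).
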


\begin{proof}
	``$\Rightarrow$'': Suppose $ (u-1)(v-1) >w-1$.  We show that $I_{u,v}\subsetneq I_{u,v,w}$ by constructing a point $T\in V(I_{u,v})\setminus V(I_{u,v,w})$. Indeed, let $T=(t_{ijk})$ be defined by 
	\[ t_{ijk} = \begin{cases} 1 & \text{if } 1\leq i\leq u-1, 1\leq j\leq v-1, k=(i-1)(v-1)+j; \\ 0 & \text{otherwise.} \end{cases}  \]
Then $\rank\ T_1=u-1$, $\rank\ T_2=v-1$, $\rank\ T_3=\min\{(u-1)(v-1),r\}>w-1$. 
		
	``$\Leftarrow$'': Suppose $ (u-1)(v-1) \leq w-1$.  Obviously $I_{u,v}\subseteq I_{u,v,w}$, so it suffices to prove  $I_{u,v}\supseteq I_{u,v,w}$.  We may assume that $K$ is an algebraically closed field.  This is because equality of ideals in a polynomial ring is not sensitive to field extensions by Lemma \ref{extensionfieldlem}.  
	
We claim that $V(I_{u,v})=V(I_{u,v,w})$. It suffices to prove ``$\subseteq$''. Let $T=(t_{ijk})\in V(I_{u,v})$.  Without loss of generality, we may assume that $T_1$ is a first flattening matrix with rank $u'-1$ (with $u'\le u$) whose top $u'-1$ rows are linearly independent, and $T_2$ is a second flattening matrix of rank $v'-1$ (with $v'\le v$) whose top $v'-1$ rows are linearly independent. 
Then $T_1$ can be factored as $T_1=AR_1$ 
where $A=(a_{ij})$ is a $m\times(u'-1)$ matrix whose top $u'-1$ rows form an identity matrix, and $R_1\in \Mat_{(u'-1)\times nr}(K)$ is the submatrix of $T_1$ that has the top $(u'-1)$ rows of $T_1$; 
similarly, $T_2=BR_2$ 
where $B=(b_{ij})$ is a $n\times(v'-1)$ matrix whose top is an identity matrix and $R_2\in \Mat_{(v'-1)\times mr}(K)$  is the submatrix of $T_2$ that has the top $(v'-1)$ rows of $T_2$; 
Then
$t_{ijk}=\sum_{p=1}^{u'-1} a_{ip}t_{pjk}=\sum_{q=1}^{v'-1}b_{jq}t_{iqk}$. 
Denote ${\bf t}_{ij*}=[t_{ij1},\dots,t_{ijr}]^T\in K^r$. 
Then 
${\bf t}_{ij*}=\sum_{p=1}^{u'-1}\sum_{q=1}^{v'-1} a_{ip}b_{jq} {\bf t}_{pq*}$.
So a third flattening matrix $T_3$ must have rank $\le (u'-1)(v'-1)\le (u-1)(v-1)\le w-1$, thus $T\in V(I_{u,v,w})$.

Now by the strong Nullstellensatz, $\sqrt{I_{u,v}}=\sqrt{I_{u,v,w}}$.  Then by Corollary \ref{generalize}, the defining minors of $I_{u,v}$ form a Gr\"obner basis, and so $\LT(I_{u,v})$ is square-free, and thus, $I_{u,v}$ is a radical ideal, implying that $I_{u,v}=\sqrt{I_{u,v}}=\sqrt{I_{u,v,w}}\supseteq I_{u,v,w}$.   
\end{proof}

The above proof requires the following technical, but well known result. 

\begin{lem}\label{extensionfieldlem}
	If $k$ is a subfield of $F$, $R=k[x_1,...,x_n]$, $S=F [x_1,...,x_n]$, and $I$ and $J$ be ideals of $R$.  If $IS=JS$, then $I=J$.
\end{lem}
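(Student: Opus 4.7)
The plan is to show that the extension $R \hookrightarrow S$ is faithfully flat (in fact free), which immediately gives the contraction identity $IS \cap R = I$ for every ideal $I$ of $R$; from this the lemma is a one-line consequence.

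Concretely, I would first pick a $k$-basis $\{e_\alpha\}_{\alpha \in \Lambda}$ of $F$ with $1 \in \{e_\alpha\}$, say $e_{\alpha_0} = 1$. Then $S = F[x_1,\dots,x_n] = F \otimes_k R$ is a free $R$-module with basis $\{e_\alpha\}$: every element of $S$ has a unique expression as a finite sum $\sum_\alpha e_\alpha r_\alpha$ with $r_\alpha \in R$. This is just the observation that the natural $k$-bilinear map $F \times R \to S$ induces an isomorphism $F \otimes_k R \xrightarrow{\sim} S$, and choosing a $k$-basis of $F$ turns this into a free $R$-module basis of $S$.

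Next I would prove $IS \cap R = I$ for any ideal $I$ of $R$. The inclusion $\supseteq$ is trivial. For $\subseteq$, take $f \in IS \cap R$ and write $f = \sum_{i=1}^N g_i h_i$ with $g_i \in I$ and $h_i \in S$. Expanding each $h_i = \sum_\alpha e_\alpha h_{i,\alpha}$ with $h_{i,\alpha} \in R$ and collecting terms gives
\[
f \;=\; \sum_\alpha e_\alpha \Bigl(\sum_{i=1}^N g_i h_{i,\alpha}\Bigr), \qquad \sum_{i=1}^N g_i h_{i,\alpha} \in I.
\]
Since $f \in R$ also has the expansion $f = e_{\alpha_0} \cdot f$, uniqueness of coefficients with respect to the free basis $\{e_\alpha\}$ forces $f = \sum_i g_i h_{i,\alpha_0} \in I$.

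Finally, assuming $IS = JS$, contract to $R$ on both sides: $I = IS \cap R = JS \cap R = J$, proving the lemma. The only nontrivial step is the freeness of $S$ over $R$, which reduces to choosing a basis of $F/k$; everything else is a direct comparison of coefficients, so I do not anticipate any obstacle.
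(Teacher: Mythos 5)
Your proof is correct and reaches the same key identity $IS \cap R = I$ that the paper relies on, but by a more elementary and self-contained route. The paper's proof is entirely citation-driven: it invokes an exercise from Atiyah--MacDonald to get that $F$ is faithfully flat over $k$, then a base-change result from Matsumura to get that $S$ is faithfully flat over $R$, and finally Matsumura's Theorem~7.5 for the contraction identity. You instead observe the stronger and more concrete fact that $S$ is \emph{free} over $R$ on a $k$-basis $\{e_\alpha\}$ of $F$, and then prove $IS \cap R = I$ directly by comparing coefficients in the unique basis expansion. The trade-off is clear: the paper's version is shorter on the page but hides the mechanism behind three references, whereas yours is fully explicit, requires no flatness machinery, and makes it transparent exactly where the hypothesis that $k$ is a field (hence $F$ is $k$-free) is used. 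One small stylistic remark: you should note explicitly that the equality of the two basis expansions of $f$ also forces $\sum_i g_i h_{i,\alpha} = 0$ for all $\alpha \neq \alpha_0$; you only use the $\alpha_0$ component, but stating the full conclusion of uniqueness makes the step airtight.
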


\begin{proof}
	Since $k$ is a field, the only maximal ideal, $(0)$, in $k$ satisfies $(0)^e=(0)$, which is not $(1)$ in $F$, so by \cite[p.45 exercise 16]{Atiyah} $F$ is faithfully flat over $k$.  Then, $S$ $(= F \otimes_k R)$ is faithfully flat over $R$ by \cite[p.46]{Matsumura}.  So, $IS \cap R=I$ by \cite[Thm 7.5]{Matsumura} (and, likewise $JS \cap R=J$).  Since $IS=JS$, we have that $I = IS\cap R = JS\cap R = J$.
\end{proof}

\subsection{Algebraic Statistics}\label{algebraicstatistics}

The use of algebraic and geometric techniques to study statistical problems is relatively recent \cite{Drton}, but provides a rich new context for interpreting classical computational algebra problems, such as ours.  In this section we describe how to interpret the double determinantal variety from a statistical viewpoint.  We will restrict our discussion to discrete random variables, but the concepts may extend to continuous random variables.

\subsection{Independence Models}
Let $X$ and $Y$ be discrete random variables on $m$ and $n$ states, respectively.  We consider whether $X$ and $Y$ are independent.  Denote their joint probability density as $P(X=i,Y=j)=p_{ij}$, and denote their marginal probabilities as $P(X=i)=p_{i+}=\sum_{j=1}^n p_{ij}$ and $P(Y=j)=p_{+j}=\sum_{i=1}^m p_{ij}$.  Probability theory dictates that for independence, the probabilities must satisfy the equation $P(X=i,Y=j)=P(X=i) P(Y=j)$, or equivalently, $p_{ij}=p_{i+} p_{+j}$, for all $i,j$.  Therefore, $X$ is independent of $Y$ (denoted $X\ind Y$) if and only if for every $i,j,k,l$, 
\[ p_{ij}p_{kl}= ( p_{i+}p_{+j} )( p_{k+}p_{+l} ) = ( p_{i+} p_{+l} )( p_{k+}p_{+j} ) = p_{il}p_{kj} \; (\textrm{or equivalently},
\begin{vmatrix}
p_{ij} & p_{il} \\
p_{kj} & p_{kl}
\end{vmatrix} =0). \]
This statement is equivalent to saying that the matrix $P=(p_{ij})$ has rank at most one.

A discrete probability distribution is a collection of non-negative values that sum to one, so we may think of our joint probability, geometrically, as a point of $\mathbb{R}^{mn}$ that is contained in the probability simplex
$\Delta_{mn-1} = \{ (p_{ij}) \in \mathbb{R}^{mn}: p_{ij}\geq 0, \sum_{ij} p_{ij} =1 \}$. 
The set of all distributions for two independent discrete random variables is called the independence model, which is the intersection of the probability simplex with the Segre variety  \cite{Drton},
\begin{equation} \label{indmodel}
\aligned
\mathcal{M}_{X\ind Y}&= \{ P=(p_{ij})\in \Delta_{mn-1}: \rank(P)\leq 1 \} \\
&=
\{ P=(p_{ij}) \in \mathbb{R}^{mn}: \rank(P) \leq 1 \} \cap \Delta_{mn-1}.
\endaligned
\end{equation}  
The ideal corresponding to the variety  $\{ P=(p_{ij}) \in \mathbb{R}^{mn}: \rank(P) \leq 1 \}$ is the independence ideal, $I_{X\ind Y} \subseteq K[p_{ij}]$, which is generated by all 2-minors of the matrix of variables $(p_{ij})$.

Inferential statistics uses a randomly collected set of data to decide something about the unknown distribution of the random variables observed.  The observation of data is the realization of a geometric point, and statisticians develop means to decide whether that point is close enough to a particular variety, or set of points that satisfy a certain notion, such as independence.  The dimension of the tensor required to store the data equals the number of random variables observed.  Suppose we are given $t$ discrete random variables, and two non-intersecting subsets, $S_1$ and $S_2$, of $\{ 1,\hdots , t\}$.  Then, an independence statement asserts that the outcomes of all variables corresponding to $S_1$ are independent of the outcomes of all variables corresponding to $S_2$, in that the joint density is equal to the product of the respective marginal densities.  As such, each independence statement corresponds to the requirement that some matrix (a contraction or a flattening of the probability tensor) has rank $\leq 1$.  

We specify to an order 3 tensor.  Let $X_1,X_2,X_3$ be discrete random variables on $m,n,r$ states respectively.  We denote the joint probability distribution using the values $p_{ijk}=P(X_1=i,X_2=j,X_3=k)$, and we have the following probability tensor.  
\begin{center}
	\begin{tikzpicture}[scale=.6]
	\draw (0,0)node{$p_{111}$};
	\draw (2,0)node{$p_{1n1}$};
	
	\draw (0,-2)node{$p_{m11}$};
	\draw (2,-2)node{$p_{mn1}$};
	
	\draw[dashed] (0.6,0) to (2-.6,0);
	\draw[dashed] (0.6,-2) to (2-.6,-2);
	\draw[dashed] (0,0-.4) to (0,-2+.4);
	\draw[dashed] (2,0-.4) to (2,-2+.4);
	
	\draw (1.5,1)node{$p_{11r}$};
	\draw (3.5,1)node{$p_{1nr}$};
	\draw (3.5,-1)node{$p_{mnr}$};
	
	\draw[dashed] (2.15,1) to (3-.15,1);
	\draw[dashed] (3.5,1-.35) to (3.5,-1+.35);
	\draw[dashed] (0+.35,0+.35) to (1.5-.35,1-.25);
	\draw[dashed] (2+.35,0+.35) to (3.5-.35,1-.25);
	\draw[dashed] (2+.35,-2+.35) to (3.5-.35,-1-.25);
	
	\draw (-1.25,-.75)node{$P=$};
	\end{tikzpicture}
\end{center}
Then, the marginal distributions can be ascertained from a contraction (see Definition \ref{contraction}), $P^J$, along a subset of indices $J\subset \{ 1,2,3 \}$.  For example, the joint marginal distribution for $X_1$ and $X_2$ is the contraction of $P$ along the set $\{3\}$, resulting in the following matrix
\[ P^{\{ 3 \} } = 
\begin{pmatrix}
p_{11+} & \hdots & p_{1n+} \\
\vdots & & \vdots \\
p_{m1+} & \hdots & p_{mn+}
\end{pmatrix}\]
where $p_{ij+}=P(X_1=i,X_2=j)=\sum_{k} p_{ijk}$.  The marginal distribution for $X_1$, for instance, would be the $\{2,3\}$-contraction  
\[ P^{\{2,3 \}}  = (p_{1++},\hdots, p_{m++}) \]
where $P(X=i)=p_{i++}=\sum_{j,k} p_{ijk}$.  We consider the two possible types of independence statements for three random variables:  $X_i\ind X_j$ results in a rank bound of 1 on a contraction matrix, and $X_i\ind (X_j,X_k)$ results in a rank bound of 1 on a flattening matrix.

\begin{defn}\label{ideals1}
	For any discrete random variables $X_1,X_2,X_3$ on $m,n,r$ states respectively, let the joint probability distribution given by the order 3 tensor $P=(p_{ijk})$ where $p_{ijk}=P(X_1=i,X_2=j,X_3=k)$.  Let $P_1$,$P_2$, and $P_3$ be a first, second, and third flattening matrix, respectively.  Then, for distinct indices $a,b,c\in \{1,2,3 \}$ we have the following independence models:
	\[  \mathcal{M}_{X_a \ind (X_b,X_c)} = \{P\in \Delta_{mnr-1}: \rank(P_a)\leq 1   \} \]
	\[   \mathcal{M}_{X_a \ind X_b} = \{ P\in \Delta_{mnr-1} : \rank(P^{\{ c \}})\leq 1 \} \]
\end{defn}

In general, each set of independence statements, $\mathcal{C}$, corresponds to an ideal, $I_{\mathcal{C}}$, in the polynomial ring in $mnr$ variables $R=K[p_{ijk}]$, generated by the 2-minors of the corresponding matrices (given by appropriate contractions or flattenings) \cite{Sullivant}.  Under certain conditions, the independence ideal is the double determinantal ideal.

\begin{cor}\label{rem2} 
	Let $X_1,\hdots, X_n$ be discrete random variables on $a_1,\hdots, a_n$ states, respectively.  Let $P$ be the $n$-dimensional probability tensor.  Then for any $i,j\in \{1,\hdots,n \}$
	and independence set
	$\mathcal{C}_{i,j}=\{ X_i\ind (X_1,\hdots,X_{i-1},X_{i+1},\hdots,X_n), X_j\ind (X_1,\hdots,X_{i-j},X_{j+1},\hdots,X_n) \}$,  
	the independence ideal is the double determinantal ideal
	($I_{\mathcal{C}_{i,j}} = I_{2,2}$)
	and is generated by the 2-minors of two flattening matrices, which form a Gr\"obner basis under an appropriate monomial order.
\end{cor}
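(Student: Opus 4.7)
The plan is to reduce this to Corollary \ref{generalize} by interpreting each independence statement as a rank condition on a flattening matrix of $P$. First I would recall the standard algebraic-statistics dictionary that the statement $X_i\ind (X_1,\dots,X_{i-1},X_{i+1},\dots,X_n)$ is equivalent to the factorization $p_{k_1\cdots k_n}=P(X_i=k_i)\cdot P(X_1=k_1,\dots,X_{i-1}=k_{i-1},X_{i+1}=k_{i+1},\dots,X_n=k_n)$ for all indices, which exactly says that the $i^{th}$ flattening matrix of $P$ has rank at most $1$. By the standard construction of the independence ideal (as in Definition \ref{ideals1} applied in the 3-dimensional case and extended verbatim to $n$ dimensions), the ideal generated by this statement is the ideal of all $2$-minors of the $i^{th}$ flattening matrix.

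Next I would apply this to both independence statements in $\mathcal{C}_{i,j}$, so that $I_{\mathcal{C}_{i,j}}$ is generated by the $2$-minors of an $i^{th}$ flattening matrix $X_i$ together with the $2$-minors of a $j^{th}$ flattening matrix $X_j$. Since the choice of flattening matrix within its flattening class only permutes columns, the generated ideal is independent of that choice. This matches the definition of the double determinantal ideal $I_{2,2}$ in the generalized sense of \S\ref{tripledeterminantalideal}, which proves the equality $I_{\mathcal{C}_{i,j}}=I_{2,2}$.

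Finally, the Gr\"obner basis claim follows directly by invoking Corollary \ref{generalize} with the parameters $u_i=u_j=2$: there exist an $i^{th}$ flattening matrix $X_i$ and a $j^{th}$ flattening matrix $X_j$ such that $D_2(X_i)\cup D_2(X_j)$ forms a Gr\"obner basis with respect to the explicit monomial order constructed in the proof of Corollary \ref{generalize}.

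I do not expect any genuine obstacle here; the content of the corollary is essentially a translation between the statistical language of conditional independence and the algebraic language of ideals of minors of flattenings, followed by a direct appeal to the main theorem. The only point requiring care is the verification that the generalized independence-flattening correspondence works uniformly in $n$ and for any pair of indices $(i,j)$, which is routine once one is careful about the bookkeeping of the remaining $n-2$ ``inert'' indices that collectively index the columns of each flattening.
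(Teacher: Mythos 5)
Your proof is correct and follows essentially the same route as the paper, which simply says ``Follows from Corollary \ref{generalize}.'' You have usefully spelled out the implicit translation step (independence statement $\Leftrightarrow$ rank-$\le 1$ condition on a flattening $\Leftrightarrow$ ideal of $2$-minors of that flattening), but the core of the argument — reducing to Corollary \ref{generalize} with $u_i=u_j=2$ — is identical to the paper's.
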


\begin{proof}
	Follows from Corollary \ref{generalize}.
\end{proof}

\subsection{Conditional Independence and Hidden Variables}

To interpret the problem with larger rank bounds, we may consider conditional independence and hidden variables.  For conditional independence statements, we use the standard result that the probability of event $A$ occurring, given event $B$ has occurred, is $P(A|B)= {P(A \text{ and } B)}/{P(B)}$.  So, in our setting of three discrete random variables $X_1$, $X_2$ and $X_3$, on $m$, $n$, and $r$ states, respectively, we have that, for for any $1\leq k\leq r$, $P(X_1=i,X_2=j|X_3=k) = {p_{ijk}}/{p_{++k}}$.  Therefore, the joint density of $X_1$ and $X_2$ given $X_3=k$ is 
$ \dfrac{1}{p_{++k}}
\begin{pmatrix}
p_{11k}&\hdots & p_{1nk} \\
\vdots & & \vdots \\
p_{m1k} & \hdots & p_{mnk}
\end{pmatrix}
$
where the matrix is in $Scan(P)_3$ (see definition \ref{scan}). 
Imposing an independence requirement on a set of conditioned variables, puts a rank bound on each of the conditional distribution matrices.   

\begin{defn}
	For three discrete random variables with joint probability tensor $P$, and for any distinct indices $a,b,c\in \{ 1,2,3\}$, the conditional independence model is 
	\[ \mathcal{M}_{X_a\ind X_b|X_c}=\{ P\in \Delta_{mnr-1}: \rank(P')\leq 1, \textrm{ for any }P'\in Scan(P)_c    \} .  \]
\end{defn}

A hidden variable is one whose outcomes are not observed, for whatever reason. When conducting an experiment, one might suspect that there is some hidden variable that dictates the distribution of probabilities for the observable outcomes.  That is, suppose that $P=(p_{ijk})$ is a 3-dimensional probability tensor for the joint distribution of $X_1,X_2,X_3$, and suppose there is an unobservable random variable $Y$ with $u$ states.  Let $P(Y=i)=\pi_i$ for each $i$. For each $l=1,\hdots,u$, given that $Y=l$, let the conditional joint distribution on $X_1,X_2,X_3$ be $P^{(l)} = (p_{ijk}^{(l)})\in \mathcal{M}$, from a fixed model $\mathcal{M}$. 
Then
\[  p_{ijk}=\sum_{l=1}^u P(X_1=i,X_2=j,X_3=k|Y=l)P(Y=l) = \sum_{l=1}^u p_{ijk}^{(l)}\pi_l. \]
Since the conditional distributions all come from the same model  $\mathcal{M}$, then $P$ is a convex combination of points of $\mathcal{M}$.  That is, it is in the $u^{th}$ mixture of the model $\mathcal{M}$ \cite{Sullivant},
\[ P \in  \Mixt^{u}(\mathcal{M}) := \left \{ \sum_{i=1}^u \pi_i P_i : \pi_i\geq 0, \sum_i\pi_i=1,P_i\in \mathcal{M}   \right \} .  \]
Mixture models are contained in the intersection of secant varieties with $\Delta_{mnr-1}$, but are not, in general, equal \cite[Example 14.1.6]{Sullivant}.  In fact, when rank is $\geq 2$ mixture models may have complicated  boundaries and require the study of broader notions of rank \cite{Allman}.  For the purposes of this paper, we aim to provide a context for interpreting the double determinantal ideal.  
From this standpoint, suppose that we would like to test some conditional independence constraints with a hidden variable in mind.  For instance, the constraint $X_1\ind (X_2,X_3)|Y $ would impose rank $\le 1$ condition on the first flattening of each of the $u$ conditional probability tensors $P^{(1)},\hdots,P^{(u)}$.  This would imply that the probability tensor $P$ for $X_1,X_2,X_3$ (unconditioned on $Y$) would have a first flattening which may be written as a convex combination of the $u$ first flattenings of $P^{(1)},\hdots,P^{(u)}$, each having rank $\le1$.  As such, the first flattening of the tensor $P$ would have rank $\le u$.  Therefore, the set of all probability tensors 
\[ \{ P\in \mathbb{R}^3: \rank P_1\leq u      \}    \]
which have first flattening with rank at most $u$ would be a variety whose intersection with the probability simplex would constitute the model for the independence condition $X_1\ind (X_2,X_3)|Y$ when conditioned on a hidden variable $Y$ on $u$ states.  And, the corresponding conditional independence ideal is generated by all the size $u+1$ minors of the first flattening of a generic matrix.   

\begin{rem}
	Let $X_1$, $X_2$, and $X_3$ be discrete random variables on $m$, $n$, and $r$ states respectively, and let $Y_1$, $Y_2$, and $Y_3$ be hidden variables on $u$, $v$, and $w$ states respectively.
	The conditional independence ideal corresponding to the independence statements 
	$$\mathcal{C}_2=\{ X_1\ind (X_2,X_3)|Y_1, X_2\ind (X_1,X_3)|Y_2\}$$ is given by the double determinantal ideal
	$I_{\mathcal{C}_2} = I_{u,v}$
	which is generated by minors (which form a Gr\"obner basis as we have shown).  The conditional independence ideal corresponding to the independence statements \[ \mathcal{C}_3=\{ X_1\ind (X_2,X_3)|Y_1, X_2\ind (X_1,X_3)|Y_2,X_3\ind(X_1,X_2)|Y_3\} \] 
	is given by the triple determinantal ideal
	$I_{\mathcal{C}_3} = I_{u,v,w}$.
	As a result of Proposition \ref{double=triple}, these ideals coincide whenever $(u-1)(v-1)\leq w-1$. This gives a condition under which the independence constraints of $\mathcal{C}_2$ imply that $X_3\ind(X_1,X_2)|Y_3$.  
\end{rem}

\end{document}